\newtheorem{thm}{Theorem}[section]
\newtheorem{prop}[thm]{Proposition}
\newtheorem{lem}[thm]{Lemma}
\newtheorem{cor}[thm]{Corollary}
\newtheorem*{rem*}{Remark}
\theoremstyle{definition}
\newtheorem{example}[thm]{Example}
\newtheorem{rem}{Remark}
\def \e {{\varepsilon}}
\def \R {{\mathbb R}}
\def \H {{\mathbb H}}
\def \N {{\mathbb N}}
\def \Q {{\mathbb Q}}
\def \g {\gamma}
\def \G {\Gamma}
\def \Z {\mathbb{Z}}
\def \psl  {{\hbox{PSL}_2( {\mathbb R})} }
\def \sl  {{\hbox{SL}_2( {\mathbb R})} }
\def \slz  {{\hbox{SL}_2( {\mathbb Z})} }
\def \GmodH {{\Gamma\backslash\mathbb H}}
\def \Ginf {{\Gamma_{\!\infty}}}
\DeclareMathOperator{\arccosh}{arcosh}
\newcommand{\vol}[1]{\hbox{vol}\left( #1 \right)}
\newcommand{\abs}[1]{\left\lvert #1 \right\rvert}
\newcommand{\norm}[1]{\left\lVert #1 \right\rVert}
\DeclareMathOperator*{\li}{li}
\newcommand{\legendre}[2]{\ensuremath{\left( \frac{#1}{#2} \right) }}
\title{Counting and equidistribution over primes in hyperbolic groups}
\date{today}
\author[Y. Petridis]{Yiannis N. Petridis}
\address{Department of Mathematics, University College London, Gower Street, London WC1E 6BT, United Kingdom}
\email{i.petridis@ucl.ac.uk}
\author[M. Risager]{Morten S. Risager}
\address{Department of Mathematical
  Sciences, University of Copenhagen, Universitets\-parken 5, 2100
  Copenhagen \O, Denmark}
\email{risager@math.ku.dk}
\thanks{Morten S. Risager was supported by the Grant DFF-3103-00074B from Independent Research Fund Denmark. Both authors were supported by the Swedish Research Council under grant no. 2016-06596 while in residence at Institut Mittag-Leffler in Djursholm, Sweden during the Analytic Number Theory programme in the spring of 2024.}
\keywords{}
\subjclass[2020]{Primary 11J71, 
11N45 
; Secondary 11N36. 
 }
\date{\today}
\begin{document}
\begin{abstract} 

We consider equidistribution of angles for certain hyperbolic lattice points in the upper half-plane. Extending work of Friedlander and Iwaniec we show that for the full modular group equidistribution persists for matrices with $a^2+b^2+c^2+d^2=p$ with $p$ prime; at least if we assume sufficiently good lower bounds in the hyperbolic prime number theorem by Friedlander and Iwaniec. We also investigate  related questions for a specific arithmetic co-compact group and its double cosets by hyperbolic subgroups. The general equidistribution problem was studied by Good, and in this case, we show, that equidistribution holds unconditionally when restricting to primes.
\end{abstract}

\maketitle

\section{Introduction} Let $X$ be a compact topological space  equipped with a non-negative measure $\mu$, normalized such that $\mu(X)=1$.
It is an important problem in number theory to determine if a (generalised) sequence $S= (x_i)_{i\in I}\subseteq X$ is (asymptotically) equidistributed on $X$ (with respect to $\mu$), i.e. if for every $f\in C(X)$ we have that 
\begin{equation}
    \frac{1}{N_S(x)}\sum_{\nu(i)\leq x}f(x_i)\to \int_X f d \mu \textrm{ as }x\to\infty.
\end{equation}
Here we assume that $I$ is equipped with a (size) function $\nu:I\to \N$ with the property that $N_S(x):=\#\{i\in I:\nu(i)\leq x\}$ is finite for all $x\in \R$. 

It is an interesting question to see how much we can shrink the index set $I$ and still have (asymptotic) equidistribution. We say that a sequence $(x_i)_{i\in I}$ \emph{is (asymptotically) equidistributed on $X$ over primes} if  
for every $f\in C(X)$ we have that 
\begin{equation}
    \frac{1}{\pi_S(x)}\sum_{\substack{\nu(i)\leq x\\ \nu(i) \textrm{  prime}}}f(x_i)\to \int_X f d \mu \textrm{ as }x\to\infty.
\end{equation}
Here \begin{equation}\pi_S(x)=\#\{i\in I:\nu(i)\leq x, \nu(i) \textrm{ prime}\}.\end{equation} 
\begin{example}[Angles of lattice points in $\Z^2$]Consider the sequence of (normalized) angles of the square lattice $A=(arg(v)/2\pi)_{v\in \Z^2}\subseteq  \R\slash \Z$ with $\nu(v)=\norm{v}^2$. In the 19th century Gauss \cite{Gauss:2011} noticed that \begin{equation}\label{Euclidean-counting}N_A(x)=\pi x +O(x^{1/2}),\end{equation} (see e.g. \cite{BerndtKimZaharescu:2018} for a description of Gauss’ elementary method). His method is flexible enough to show that the normalized angles are equidistributed with respect to Lebesgue measure on $\R\slash \Z$. Landau \cite{Landau:1903} observed that $\pi_A(x)$ can be estimated, using the analytic properties of the Dedekind zeta-function $\zeta_K(s)$ for the number field $K=\Q(i)$;  in particular,  its pole at $s=1$ and the fact that it has a zero-free region. He found that 
\begin{equation}\label{Euclidean-counting-prime}\pi_A(x)=\sum_{p\leq x}r(p)=4 \hbox{li}(x)+O(x/\log(x)^A),
\end{equation}
where $\hbox{li}(x)=\int_{2}^x\frac{1}{\log t}dt$ is the logarithmic integral function. The factor 4 occurs since we are really counting prime ideal with multiplicity 4. In order to determine the angular distribution of these \lq prime lattice points\rq{}, Hecke \cite{Hecke:1920} introduced his Gr\"ossencharacters \begin{equation}\xi_k((z))=\left(\frac{z}{\abs{z}}\right)^{4k},\end{equation} proved analytic continuation of the corresponding Hecke $L$-functions $L(s, \xi_k)$ and proved, 
via Weyl’s equidistribution criterion, that $\arg(v)/2\pi$ equidistributes on primes.  See \cite[Eq (52)]{Hecke:1920}. See also the work of Kubilius \cite{Kubilius:1950} for a statement with error term.
\end{example}

\begin{example}[The sequence $\alpha n $ modulo 1 for $\alpha$ irrational] Using his equidistribution theorem, Weyl \cite[Satz 2]{Weyl:1916} proved that, if $\alpha\in \R\backslash \Q$, then $(\alpha n)_{n\in \N} \subseteq \R\slash \Z$ is equidistributed with respect to the Lebesgue measure on $\R\slash \Z$. Understanding what happens when restricting to primes is much harder, but Vinogradov famously proved that $(\alpha n)_{n\in \N}$ is indeed equidistributed over primes. See \cite[Thm 21.3]{IwaniecKowalski:2004a} for a proof. 

Analogous statements hold for $ q(n)$ with $q(x)$ a non-constant 
real polynomial with some condition on the coefficients (the leading coefficient being irrational suffices). For more on this consult \cite[Prop. 21.1]{IwaniecKowalski:2004a}, \cite[Thm 1]{Rhin:1973}.\end{example}

\subsection{Main results}
In this paper we investigate equidistribution over primes for various quantities in specific arithmetic subgroups of $\sl$. Anton Good \cite{Good:1983b} studied nine different types of decomposition of cofinite Fuchsian groups corresponding to double coset spaces 
$\G_\xi\backslash\G\slash \G_\chi$
for $\G_\xi, \G_\chi$  pairs of stabilisers of  $\xi$ resp. $\chi$. Here $\xi$, $\chi$ can be  cusps (parabolic subgroups), points in $\H$ (elliptic subgroups), or  geodesics between two points on the boundary of $\H$ (hyperbolic subgroups).  Using the spectral theory of automorphic forms he proved \cite[Cor. p 119]{Good:1983b} an equidistribution result for each of these nine types of decomposition. Good's results hold for any co-finite discrete subgroup of $\psl$, as their proof  utilize the spectral theory of the corresponding automorphic Laplacian, and not any arithmetic. Parkkonen and Paulin proved more general equidistribution results for endpoints of common perpendiculars in negative curvature, see \cite{ParkkonenPaulin:2017a}.

In this paper we consider arithmetic examples of his three diagonal cases $\xi=\chi$ and investigate what happens when we restrict the counting functions to primes. 

\subsubsection{The parabolic case} Consider $\G=\slz$, and the parabolic subgroup $\Ginf=\left\langle\begin{pmatrix}1&1\\0&1\end{pmatrix}\right\rangle$. We consider the sequence 
\begin{equation}
P=\left(\left(\frac{a}{c}, \frac{d}{c}\right)\right)\subseteq (\R\slash\Z)^2
\end{equation}
indexed over $\g \in  \Ginf\backslash\G\slash\Ginf$ with  $0<c$, and with size function $\nu(\g)=c$.

In this case Good's theorem (see also Selberg's unpublished notes \cite{Selberg:1977a}) implies that 
\begin{equation}
N_P(x)=\frac{3}{\pi^2}x^2+O(x^{4/3}),
\end{equation}
and $P \textrm{ is equidistributed on } (\R\slash \Z)^2$. 
Here and elsewhere equidistribution on $(\R\slash \Z)^2$ is understood to be with respect to Lebesgue measure.
For this specific group much better error terms are known using different methods (see \cite[p. 114 eq (2)]{Walfisz:1963a}, \cite{Liu:2016c}).

The sequence $P$ is straightforward to analyze over primes: Using the prime number theorem it follows that
\begin{equation}
    \pi_P(x)=\li(x^2)+O(x^2/\log^A(x)).
\end{equation}
Any non-trivial bound on the classical Kloosterman sums gives that 
$P$ is equidistributed on  $(\R\slash \Z)^2$ over primes.
We will not dwell on the details. The relevant decomposition  is a Bruhat type decomposition, see \cite[sec 1.4]{Iwaniec:2002a}.  Humphries \cite{Humphries:2022} proved several refined  equidistribution results in this case, using different notation.

\subsubsection{The elliptic case} Let $\G=\slz$, which acts on the upper half-plane $\H$. Consider the hyperbolic length $d_\H(\g i,i)$ between $\g i$ and  $i$, and the angle of the hyperbolic geodesic from $i$ to $\g i$ against the vertical geodesic from $i$. If $\g=\begin{pmatrix}a&b\\c&d\end{pmatrix}$ we let
\begin{equation}
    \nu_\H(\gamma)=a^2+b^2+c^2+d^2=2\cosh d_\H(\g i,i). 
\end{equation}
Selberg \cite{Selberg:1977a}, Nicholls \cite{Nicholls:1983a}, and Good \cite{Good:1983b} proved asymptotic equidistribution of angles related to $\g\in \G$ with $\nu_\H(\g)\leq x$. In fact Selberg and Good proved something even stronger. For $I\neq\g\in \sl$ the Cartan decomposition allows us to write \begin{equation}\g=k(\theta_1(\g))a(e^{-r})k(\theta_2(\g)),\end{equation} where 
\begin{equation}
k(\theta)=\begin{pmatrix}\cos\theta&\sin\theta\\
-\sin\theta&\cos\theta
\end{pmatrix}, \quad
a(e^{-r})=\begin{pmatrix}
    e^{-r/2}&\\&e^{r/2}
\end{pmatrix}.
\end{equation}
 Here $r=d_\H(\g i,i)>0$ is uniquely determined, and $\theta_1(\g), \theta_2(\g)$ are determined modulo $\pi$. The works of Selberg \cite{Selberg:1977a} and Good \cite{Good:1983b} imply that 
if we consider the sequence 
\begin{equation}\label{eq:E-sequence}E=(\theta(\g))_{\g\in\G}=((\theta_1(\g)/\pi, \theta_2(\g)/\pi))\end{equation} indexed over $\G$ and with counting function $\nu(\g)=\nu_\H(\g)$ then 
\begin{equation} \label{hyperbolic-counting}
    N_E(x)=6 x+O(x^{2/3}),
\end{equation}
and 
$E$  is equidistributed on $(\R\slash \Z)^2$. 
See also \cite{Patterson:1975a, Selberg:1977a, Good:1983b, Iwaniec:2002a}. 

In order to understand what happens when we restrict to primes, we recall that Friedlander and Iwaniec \cite{FriedlanderIwaniec:2009a} studied what they call a \lq hyperbolic prime number theorem\rq. In  our notation this means understanding $\pi_E(x)$.
Conditional on a conjecture weaker than the Elliott--Halberstam conjecture, see \eqref{hyperbolic-counting-prime-precise} below for the precise conjecture, they used sieving techniques to prove that 
\begin{equation} \label{hyperbolic-counting-prime}
\pi_E(x)
\asymp \frac{x}{\log x}.
\end{equation} Here $\asymp$ means that the quotient of the two sides is bounded from above and below by strictly positive constants. In this paper we show, conditional on the same conjecture, that $E$ is equidistributed over primes,
 analogously to Hecke's result for $\Z^2$. 

\begin{thm}\label{hyperbolic-equidistribution-prime} Let $E$ be the sequence \eqref{eq:E-sequence}, i.e. the pair of normalized angles from the Cartan decomposition for $\slz$. Assume $\pi_E(x)$ satisfies \eqref{hyperbolic-counting-prime}. Then $E$ is equidistributed on $(\R\slash \Z)^2$ over primes. 

\end{thm}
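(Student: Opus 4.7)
The proof will proceed via Weyl's equidistribution criterion. For every pair $(k,l)\in \Z^2\setminus\{(0,0)\}$, equidistribution over primes is equivalent to the Weyl sum bound
\begin{equation}
W(x;k,l):=\sum_{\substack{\g\in\G\\ \nu_\H(\g)\leq x\\ \nu_\H(\g)\text{ prime}}}e^{2i(k\theta_1(\g)+l\theta_2(\g))}=o(\pi_E(x)),
\end{equation}
and by the hypothesis \eqref{hyperbolic-counting-prime} this reduces to showing $W(x;k,l)=o(x/\log x)$ for each nonzero $(k,l)$. The additive character is well defined since $\theta_1(\g),\theta_2(\g)$ are determined modulo $\pi$, and the trivial character is excluded so that the question is one of cancellation.

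The key technical input is a power-saving bound on the unrestricted twisted count
\begin{equation}
T(x;k,l):=\sum_{\substack{\g\in\G\\ \nu_\H(\g)\leq x}}e^{2i(k\theta_1(\g)+l\theta_2(\g))}.
\end{equation}
The spectral decomposition underlying Good's asymptotic \eqref{hyperbolic-counting} expresses the characteristic function of $\{\nu_\H(\g)\leq x\}$ in terms of Eisenstein series and Maass cusp forms; for $(k,l)\neq(0,0)$ the $K\times K$-invariant component vanishes, so the main term in Good's expansion disappears and one obtains $T(x;k,l)=O_{k,l}(x^{\alpha})$ for some $\alpha<1$. Running the same spectral argument on congruence subgroups of $\slz$ produces the analogous bound with the restriction $q\mid \nu_\H(\g)$, uniformly up to some level $D=x^{\vartheta}$. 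This twisted level-of-distribution estimate is the cancellation-side counterpart of the unconditional Selberg-type upper bound that is one half of \eqref{hyperbolic-counting-prime}.

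With these bounds in hand, I would expand $\Lambda(n)$ via Vaughan's (or Heath--Brown's) identity and apply it to
\begin{equation}
\sum_{n\leq x}\Lambda(n)B(n),\qquad B(n):=\sum_{\nu_\H(\g)=n}e^{2i(k\theta_1(\g)+l\theta_2(\g))}.
\end{equation}
The Type~I sums reduce to linear combinations of $T_q(x;k,l)$ with $q\leq D$ and are controlled directly by the twisted level-of-distribution estimate. The Type~II sums are bilinear forms $\sum_m\sum_n\alpha_m\beta_n B(mn)$, which I would treat by Cauchy--Schwarz followed by a Deshouillers--Iwaniec style spectral large sieve for the Cartan coordinates of $\slz$. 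Combining the Type~I and Type~II contributions gives $W(x;k,l)\ll x(\log x)^{-A}$ for any $A>0$, which by \eqref{hyperbolic-counting-prime} is strictly $o(\pi_E(x))$.

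The principal obstacle is the Type~II bilinear estimate: just as in Vinogradov's treatment of $\sum_{p}e(\alpha p)$, genuine cancellation in a bilinear form is required, and the trivial bound loses a full power of $\log x$. In the present setting that cancellation cannot come from classical Weyl differencing but must be extracted from the spectral gap of $\slz$, so the argument will likely need either a strengthened form of the Friedlander--Iwaniec hypothesis that incorporates the twists by $\chi_{k,l}$, or an independent spectral large-sieve inequality adapted to the Cartan variables $(\theta_1,\theta_2)$.
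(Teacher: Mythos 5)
Your reduction via Weyl's criterion to the bound $W(x;k,l)=o(x/\log x)$ is exactly where the paper starts, but the route you propose after that does not close, and you say so yourself: the Type~II bilinear estimate is left as an unresolved ``principal obstacle'', possibly requiring a strengthened hypothesis or a new large-sieve inequality. That is the whole content of the theorem, so as written the proposal has a genuine gap rather than a proof. Moreover, the gap is not just a missing reference. A Vaughan/Heath--Brown decomposition needs the weight $B(n)$ to interact well with the multiplicative structure of $n=mn'$, but here $B(n)=S_e(k,l,n)$ factors (Lemma \ref{lem:factorisation}) as a product of Hecke-character sums evaluated at the \emph{shifted} arguments $n+2$ and $n-2$, and $mn'\pm 2$ does not factor; so the Type~II sums $\sum_m\sum_{n'}\alpha_m\beta_{n'}B(mn')$ have no usable bilinear structure, and the Type~I ``twisted level of distribution'' (the condition $q\mid\nu_\H(\g)$ is a condition on norms of Gaussian integers, not a congruence-subgroup condition) is not supplied by Good's spectral argument. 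Your target bound $W(x;k,l)\ll x(\log x)^{-A}$ for all $A$ is also stronger than anything the paper achieves or needs.

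The paper's actual argument sidesteps all of this. Using the parametrisation $\g\mapsto(z_1(\g),z_2(\g))$ into Gaussian integers with $N(z_1)=\nu_\H(\g)+2$, $N(z_2)=\nu_\H(\g)-2$, the Weyl sum over primes becomes (up to constants) $\sum_{p\leq x}W^P_{m_1-m_2}(p+2)\,W^P_{m_1+m_2}(p-2)$, a sum over primes of a product of two non-negative-dominated multiplicative functions at two distinct linear shifts. This is precisely the shape handled by the Nair--Tenenbaum bound (Theorem \ref{nair-tenenbaum}), which gives $\frac{x}{\log^3 x}\sum_{n\leq x}\frac{|W^P_{m'}(n)|}{n}\cdot O(\log x)$; the Erd\H{o}s--Hall/Fainsilber--Kurlberg--Wennberg estimate $\sum_{n\leq x}|W^P_{m'}(n)|=O\bigl(x(\log x)^{-(1-2/\pi)}\bigr)$ then yields a saving of $(\log x)^{1-2/\pi}$ over the trivial bound --- enough, combined with the assumed lower bound $\pi_E(x)\gg x/\log x$, to conclude via Weyl's criterion. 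No bilinear sieve, no spectral large sieve, and no strengthening of the Friedlander--Iwaniec hypothesis is required; the multiplicativity is exploited directly on the shifted-prime sum rather than through a combinatorial decomposition of $\Lambda$.
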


We remark that in Theorem \ref{hyperbolic-equidistribution-prime} we do not  need the full force of \eqref{hyperbolic-counting-prime}.
Our proof only requires
\begin{equation}\label{can this be proved unconditionally}
    \frac{\pi_E(x)}{x/\log(x)}(\log x)^{1-2\pi^{-1}}\to\infty 
\end{equation}
to conclude equidistribution over primes, which is much weaker than the conditional lower bound in \eqref{hyperbolic-counting-prime}. 
It would be interesting to see if \eqref{can this be proved unconditionally} can be proved unconditionally, and/or if the exponent of $1-2\pi^{-1}$ may be improved. 

Friedlander and Iwaniec also considered a related but different counting function namely 
\begin{equation}
    \pi_{E'} (x)=\#\{\g\in  \G\vert\, \nu_\H(\g)=p-2\leq x\}.
\end{equation}
This is equivalent to considering the sequence \begin{equation}\label{def:Emodified}E'=(\theta(\g))_{\g\in\G}=((\theta_1(\g)/\pi, \theta_2(\g)/\pi))\end{equation}  with modified  counting function $\nu'(\g)=\nu_\H(\g)+2$. Clearly we have $N_{E'}(x)=N_E(x)+O(x^{2/3})$ and $E'$ is also equidistributed on $(\R\slash \Z)^2$. 

For this slightly modified sequence they found precise unconditional asymptotics
\begin{equation}\label{hyperbolic-counting-prime-shifted}
    \pi_{E'}(x)=8\pi \prod_p\left(1+\frac{\chi_4(p)}{p(p-1)}\right)\hbox{li}(x)+O_A(x(\log x)^{-A})
\end{equation} for any $A>0$. Note that there is an obvious factor of 8 missing in going from (1.16) to (1.17) in \cite{FriedlanderIwaniec:2009a}.  Here $\chi_4$ is the primitive Dirichlet character modulo 4.
Our method is flexible enough to allow  considering the angle distribution in this case, and we arrive at the following unconditional result:

\begin{thm}\label{hyperbolic-equidistribution-prime minus 2} Let $E'$ be the modified sequence \eqref{def:Emodified}, i.e. the pair of normalized angles from the Cartan decomposition with the modified ordering. Then 
$E'$ is equidistributed on $(\R\slash \Z)^2$ over primes. 
\end{thm}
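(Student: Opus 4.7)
By Weyl's equidistribution criterion on the torus $(\R/\Z)^2$, combined with the main-term asymptotic \eqref{hyperbolic-counting-prime-shifted}, the theorem reduces to proving that for every non-zero $(m,n)\in\Z^2$,
\begin{equation}
S_{m,n}(x):=\sum_{\substack{\gamma\in\Gamma\\ \nu_\H(\gamma)+2=p\leq x\\ p\text{ prime}}} e^{2i(m\theta_1(\gamma)+n\theta_2(\gamma))}=o(x/\log x).
\end{equation}

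A direct computation of $k(\theta_1)a(e^{-r})k(\theta_2)$ produces the identities
\begin{equation}
(a+d)+i(b-c)=2\cosh(r/2)\,e^{i(\theta_1+\theta_2)},\qquad (d-a)+i(b+c)=2\sinh(r/2)\,e^{i(\theta_1-\theta_2)},
\end{equation}
so $\nu_\H(\gamma)+2=(a+d)^2+(b-c)^2$ is a Gaussian norm and both $\theta_1\pm\theta_2$ are arguments of explicit Gaussian integers. Setting $(u,v,u',v'):=(a+d,b-c,d-a,b+c)$ gives a bijection between $\slz$ and integer quadruples with $u^2+v^2-u'^2-v'^2=4$ and $u\equiv u'\pmod 2$. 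Writing $m\theta_1+n\theta_2=\tfrac{m+n}{2}(\theta_1+\theta_2)+\tfrac{m-n}{2}(\theta_1-\theta_2)$ factorises the Weyl character, and $S_{m,n}(x)$ becomes a double sum over primes $p=u^2+v^2$ and shifts $p-4=u'^2+v'^2$ of the product $(u+iv)^{m+n}(u'+iv')^{m-n}$ divided by $p^{(m+n)/2}(p-4)^{(m-n)/2}$.

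Averaging the Gaussian character $(u+iv)^k$ over the four unit rotations in $\Z[i]^\times=\{1,i,-1,-i\}$ shows that the unrestricted representation sum vanishes unless $4\mid k$, while the parity-twisted companion $\sum(-1)^u(u+iv)^k$ survives only if $k\equiv 2\pmod 4$. In particular $S_{m,n}(x)=0$ whenever $m+n$ is odd. Otherwise, splitting the parity indicator $\mathbf{1}[u\equiv u'\!\!\pmod 2]=\tfrac12(1+(-1)^{u+u'})$ into these two components, the outer sum at the prime $p$ realises a Hecke Gr\"ossencharacter $\xi$ of $\Q(i)$ (twisted in the second component by the non-trivial character of $(\Z[i]/(1+i)^3)^\times$) evaluated at the Gaussian prime above $p$, while the inner sum at $p-4$ is a bounded multiplicative function $g_{m-n}$ on the Gaussian divisors of $p-4$. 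If $m+n\neq 0$, the outer Gr\"ossencharacter is non-trivial; combining the prime number theorem for the entire Hecke $L$-function $L(s,\xi)$ with Bombieri--Vinogradov for Hecke $L$-functions applied to a dyadic divisor decomposition of $g_{m-n}(p-4)$ yields $S_{m,n}(x)=O_A(x(\log x)^{-A})$. If $m+n=0$ with $m-n\neq 0$, the outer character is trivial and the task reduces to bounding $\sum_{p\leq x}g_{m-n}(p-4)$, a twisted Titchmarsh-type sum, which we treat by rerunning the Friedlander--Iwaniec sieve of \cite{FriedlanderIwaniec:2009a} that produced \eqref{hyperbolic-counting-prime-shifted}, with their coefficient $r_2(p-4)$ replaced by $g_{m-n}(p-4)$; the needed cancellation is then supplied by the zero-free region of the Hecke $L$-function attached to the Gr\"ossencharacter indexed by $m-n$.

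The main obstacle is this last case, where one must extract cancellation in a Gaussian character sum evaluated at the shift $p-4$ rather than at $p$ itself. The saving grace is that the Friedlander--Iwaniec sieve architecture is robust under a Gaussian twist, since its only level-of-distribution input is Bombieri--Vinogradov for arithmetic progressions over the rational integers, into which a finite-order Gaussian character modulo a power of $(1+i)$ can be absorbed without loss.
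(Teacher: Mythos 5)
Your arithmetic set-up coincides with the paper's: the identification of $(a+d)+i(b-c)$ and $(a-d)-i(b+c)$ as Gaussian integers of norms $\nu_\H(\g)+2=p$ and $\nu_\H(\g)-2=p-4$, and the factorisation of the Weyl character through $\theta_1\pm\theta_2=\arg z_1,\arg z_2$, is exactly Proposition \ref{prop:basic} together with Lemmas \ref{lem:translation: first-case}--\ref{lem:factorisation}. (Two small discrepancies: by \eqref{eq:reduce to quotient} the Weyl sum vanishes unless \emph{both} $m$ and $n$ are even, not merely unless $m+n$ is odd; and the unit-averaging must be performed in both components, which is why the paper passes to primary representatives and the sums $W^P_k$.) The genuine gap is in the analytic step. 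In your case $m+n=0$, $m-n\neq 0$ you must bound $\sum_{p\le x}W^P_{m-n}(p-4)$ nontrivially. The zero-free region of the Hecke $L$-function $L(s,\xi)$ controls the Gr\"ossencharacter summed over Gaussian \emph{primes}; it says nothing about the character summed over Gaussian integers of norm $p-4$ with $p$ prime. Opening $W^P_{m-n}(p-4)$ as a divisor sum turns this into a Titchmarsh-type problem in which the progressions $p\equiv 4\pmod{N(\mathfrak d)}$ must be controlled, twisted by $\xi_{m-n}(\mathfrak d)$, up to moduli of size $x^{1/2}$; neither ``rerunning the Friedlander--Iwaniec sieve'' nor a zero-free region supplies this, and no Bombieri--Vinogradov theorem for Gr\"ossencharacters with the required uniformity is cited or proved. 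The same objection applies to your case $m+n\neq 0$, where the bilinear sum couples $\xi_{m+n}(\pi_p)$ with a divisor-type weight at the shift $p-4$. The two sentences carrying the entire analytic weight of your argument assert conclusions rather than establish them.

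The paper sidesteps all of this by renouncing cancellation from the shifted character. In Theorem \ref{crucialbounds} it puts absolute values inside the sum and applies the Nair--Tenenbaum bound (Theorem \ref{nair-tenenbaum}) to the non-negative multiplicative functions $|W^P_{m_1\pm m_2}|$ evaluated at the two linear shifts of $p$; this reduces everything to the full averages $\sum_{n\le x}|W^P_k(n)|/n$. Proposition \ref{Average-Euclidean Weyl sum bounded} (the Erd\H{o}s--Hall phenomenon that $|\cos(k\theta_p)|$ has mean $2/\pi<1$ over split primes) shows that for the non-zero even index $k$ this average is $O((\log x)^{2/\pi})$ rather than $O(\log x)$, yielding a total bound $O\bigl(x(\log x)^{-1}(\log x)^{-(1-2/\pi)}\bigr)$. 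Since $\pi_{E'}(x)\asymp x/\log x$ by \eqref{hyperbolic-counting-prime-shifted}, Weyl's criterion finishes the proof. This is weaker than the power-of-$\log$ savings you aim for, but it is unconditional and elementary given the inputs. To salvage your route you would need to actually prove the Bombieri--Vinogradov statement for Hecke characters in the stated range; absent that, the argument does not close.
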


The hyperbolic prime number theorem \eqref{hyperbolic-counting-prime} of Friedlander and Iwaniec is hard. It is equivalent to the following statement:
\begin{equation}\label{conjecture-second version}
    \sum_{p\leq x}r(p+2)r(p-2)\asymp \frac{x}{\log x}.
\end{equation}
This highlights the similarity to the twin prime conjecture, and why \eqref{conjecture-second version} and  \eqref{hyperbolic-counting-prime} currently cannot be proved unconditionally. They are only proved conditionally on a weak form of the Elliott--Halberstam conjecture, see \eqref{hyperbolic-counting-prime-precise}. 

It turns out that, using Proposition \ref{prop:basic} below, Theorems \ref{hyperbolic-equidistribution-prime} and \ref{hyperbolic-equidistribution-prime minus 2} can be reformulated in terms of Gaussian integers. A Gaussian integer $w$ is called primary if $w=1 \bmod (1+i)^3$. Consider the following sets of pairs of primary Gaussian integers with norms differing by $4$:
\begin{equation}
 C_n=\left\{z_1\in \Z[i]\left\vert\, \begin{matrix}
    z_1\overline z_1 =n+2\\ 
     z_1\text{ primary}
 \end{matrix}\right.\right\}\times \left\{z_2\in \Z[i]\left\vert\, \begin{matrix}
    z_2\overline z_2=n-2\\
     \quad z_2\textrm{ primary}
 \end{matrix}\right.\right\}.  
\end{equation}
Note that for $n$ odd we have $\#C_n=r(n+2)r(n-2).$ For $z=(z_1,z_2)$ denote by $\theta(z)=(\theta(z_1),\theta(z_2))^t$ the corresponding set of angles.
Consider \begin{equation}
     M=\frac{1}{2\pi}\begin{pmatrix}
        1&\phantom{-}1\\1&-1
    \end{pmatrix},
\end{equation}
and observe that $M\theta(z)$ is the (normalized) sum and difference of the two angles. Consider the two sequences $\mathcal E$ resp. $\mathcal{E}'$ both defined to be the sum and the difference of the normalized angles of $z_1$ and $z_2 \bmod 1 $  i.e.
\begin{equation}
 \left(M\theta (z) \bmod \Z^2\right) \textrm{ indexed over }C_n,\textrm{ with }n\in \N,
\end{equation}
but with size function $\nu(z)=n$ resp. $\nu'(z)=n+2$. 
\begin{thm}\label{hyperbolic-equidistribution-prime-version-III}
The sequences $\mathcal{E}$ and $\mathcal{E}'$ are equidistributed  on $(\R\slash\Z)^2$. 
Assume $\pi_E(x)$ satisfies \eqref{conjecture-second version}, then $\mathcal{E}$ is equidistributed on $(\R\slash\Z)^2$ over primes. Unconditionally $\mathcal{E}'$ is equidistributed on $(\R\slash\Z)^2$ over primes.

\end{thm}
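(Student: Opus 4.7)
The plan is to derive Theorem \ref{hyperbolic-equidistribution-prime-version-III} from Theorems \ref{hyperbolic-equidistribution-prime} and \ref{hyperbolic-equidistribution-prime minus 2} (together with the unconditional equidistribution built into \eqref{hyperbolic-counting}) by translating statements about Cartan angles of matrices in $\slz$ into statements about arguments of Gaussian integers via Proposition \ref{prop:basic}. The key correspondence is the map $\gamma\mapsto(z_1,z_2)$ given by $z_1=(a+d)+(b-c)i$ and $z_2=(a-d)+(b+c)i$ for $\gamma=\begin{pmatrix}a&b\\c&d\end{pmatrix}\in\slz$. Using $ad-bc=1$ one computes $|z_1|^2=\nu_\H(\gamma)+2$ and $|z_2|^2=\nu_\H(\gamma)-2$, so after passing to the unique primary associate (killing the unit group $\{\pm 1,\pm i\}$ of $\Z[i]$) a matrix $\gamma$ with $\nu_\H(\gamma)=n$ feeds an element into $C_n$.

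To track the angles, I would expand the Cartan factorisation $\gamma=k(\theta_1(\gamma))a(e^{-r})k(\theta_2(\gamma))$ directly. A short trigonometric computation yields
\begin{equation}
z_1=2\cosh(r/2)\,e^{i(\theta_1(\gamma)+\theta_2(\gamma))},\qquad z_2=2\sinh(r/2)\,e^{i(\pi+\theta_2(\gamma)-\theta_1(\gamma))},
\end{equation}
so that
\begin{equation}
M\theta(z)\equiv\bigl(\theta_2(\gamma)/\pi,\,\theta_1(\gamma)/\pi\bigr)+\bigl(\tfrac12,\tfrac12\bigr)\pmod{\Z^2}.
\end{equation}
In other words, passing from $E$ (resp.\ $E'$) to $\mathcal E$ (resp.\ $\mathcal E'$) amounts to applying one fixed measure-preserving affine bijection of the torus $(\R/\Z)^2$ to each indexed point, while the size functions $\nu(z)=n$ and $\nu'(z)=n+2$ line up exactly with $\nu_\H(\gamma)$ and $\nu_\H(\gamma)+2$.

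Because such a measure-preserving bijection preserves equidistribution on any subset of indices (Weyl's criterion applies directly, or one can pull test functions back through the change of coordinates), the three assertions of the theorem follow immediately: unconditional equidistribution of $\mathcal E$ and $\mathcal E'$ on $(\R/\Z)^2$ comes from \eqref{hyperbolic-counting}; conditional equidistribution of $\mathcal E$ over primes comes from Theorem \ref{hyperbolic-equidistribution-prime}; and unconditional equidistribution of $\mathcal E'$ over primes comes from Theorem \ref{hyperbolic-equidistribution-prime minus 2}. The principal technical content is really absorbed into Proposition \ref{prop:basic}: the main thing to verify is that, after restricting to primary Gaussian integers, $\gamma\mapsto(z_1,z_2)$ is a bijection (or at worst a uniform finite covering) between $\{\gamma\in\Gamma:\nu_\H(\gamma)=n\}$ and $C_n$, with the unit-group ambiguity and sign conventions bookkept cleanly on both sides. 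Once that is in place, the deduction of Theorem \ref{hyperbolic-equidistribution-prime-version-III} is a purely formal translation and requires no additional analytic input.
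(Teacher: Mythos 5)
Your polar-form computation of $z_1(\g)$ and $z_2(\g)$ from the Cartan factorisation is correct (up to your conjugate convention on $z_2$ and a harmless half-integer shift, consistent with Proposition \ref{prop:basic} \eqref{prop:basic-angles}), but the central claim of your reduction --- that passing from $E$ to $\mathcal E$ is ``one fixed measure-preserving affine bijection of the torus applied to each indexed point'' --- is false, and this is a genuine gap. The two sequences have different index sets: $\mathcal E$ is indexed by $C_n$, i.e.\ by double cosets in $\G_i\backslash\G\slash\G_i$ through their \emph{unique primary} representative (Lemma \ref{lem:translation: second-case}), whereas $E$ is indexed by all of $\G$. By \eqref{eq:double-representation} the eight elements of a double coset do \emph{not} map to a single point of $(\R\slash\Z)^2$: they produce the four translates $(\theta_1(\g)/\pi+\delta_1/2,\,\theta_2(\g)/\pi+\delta_2/2)$ with $\delta_1,\delta_2\in\{0,1\}$, each with multiplicity two. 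Thus $E$ is the $(\tfrac12\Z\slash\Z)^2$-symmetrisation of the double-coset sequence, and its Weyl sums vanish identically at every frequency outside $(2\Z)^2$ --- that is exactly \eqref{eq:reduce to quotient}. Consequently Theorems \ref{hyperbolic-equidistribution-prime} and \ref{hyperbolic-equidistribution-prime minus 2} control the Weyl coefficients of $\mathcal E$, $\mathcal E'$ only at frequencies $(l_1,l_2)\in(2\Z)^2$; they say nothing about, say, $(l_1,l_2)=(1,0)$, whose Weyl sum is $W^P_{1}(p+2)W^P_{1}(p-2)$ and is not identically zero. Symmetrised equidistribution does not imply equidistribution: the normalised Lebesgue measure on $[0,\tfrac12]^2$ has vanishing Fourier coefficients at every non-zero frequency of $(2\Z)^2$ yet is not Lebesgue measure on the torus. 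So three quarters of the frequencies of $\mathcal E$ require a separate argument, namely direct bounds on $\sum_{p\le x}W^P_{l_1+l_2}(p+2)\,W^P_{l_1-l_2}(p-2)$ via Theorem \ref{nair-tenenbaum} and Proposition \ref{Average-Euclidean Weyl sum bounded}, exactly as in the proof of Theorem \ref{crucialbounds} (and note that $l_1\pm l_2$ need not be even here, so the Erd\H{o}s--Hall input must also be checked in the odd case).

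Two further points. First, for the opening (non-prime) assertion of the theorem your reduction has an additional index mismatch: $C_n=\emptyset$ for every even $n$ (a Gaussian integer of even norm is divisible by $1+i$, hence never primary), so $\mathcal E$ omits all $\g$ with $\nu_\H(\g)\equiv 2\bmod 4$, which form a positive proportion of $E$; equidistribution of a sequence does not pass to a positive-density subsequence. (Over primes this issue disappears, since $\nu_\H(\g)=p$ forces $p\equiv 3\bmod 4$.) Second, the step you defer --- that $\g\mapsto(z_1(\g),z_2(\g))$ induces a uniform $8$-to-$1$ covering of $C_n$ --- is precisely Lemma \ref{lem:translation: second-case} of the paper, and it is exactly where the obstruction above becomes visible: the covering is $8$-to-$1$ on index sets but is not constant in the torus coordinates, which is why the theorem cannot be obtained as a purely formal corollary of Theorems \ref{hyperbolic-equidistribution-prime} and \ref{hyperbolic-equidistribution-prime minus 2}.
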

\begin{rem}
    For points \emph{on} hyperbolic circles Chatzakos, Kurlberg, Lester and Wigman \cite{ChatzakosKurlbergLesterWigman:2021} proved the existence of a full density subsequence of $n\in \mathcal N$ such that the angle $\theta_1(\g)$, for $\g\in \G$ with $\nu_\H(\g)=n$ equidistributes as   $n\to \infty$ with $n\in \mathcal N$. Here 
    \begin{equation}
    \mathcal N=\{n\in \N\vert n=a^2+b^2+c^2+d^2\textrm{ for some }\begin{pmatrix}a&b\\c&d\end{pmatrix}\in \Gamma \}.\end{equation} They also show that exceptional radii \emph{do} exist.  Similar questions for Euclidean circles were considered by K\'atai and K\"ornyei \cite{KataiKornyei:1976} and Erd\"os and Hall \cite{ErdosHall:1999}. Our method of proof is indeed inspired by \cite{ChatzakosKurlbergLesterWigman:2021}.

    Cherubini and Fazzari \cite{CherubiniFazzari:2022} extended \cite{ChatzakosKurlbergLesterWigman:2021} in a different direction by considering other CM-points with class number $h=1$. The techniques we are using probably generalise to congruence groups and other CM-points, but probably not to general cofinite subgroups of $\psl$.
\end{rem}    
   
\subsubsection{The hyperbolic case} The final case we analyze relates to the quaternion group 
\begin{equation}
\Gamma(2,5)= \left\{\begin{pmatrix}x_0+x_1\sqrt{2} & \sqrt{5}(x_2+x_3\sqrt{2})\\
\sqrt{5}(x_2-x_3\sqrt{2})&x_0-x_1\sqrt{2}
\end{pmatrix}\in \sl\vert x_i\in \mathbb Z\right\}
\end{equation} and its hyperbolic subgroup  
\begin{equation}H=\left\langle \begin{pmatrix}
    \varepsilon^2 &0\\ 0&\varepsilon^{-2}
\end{pmatrix}\right\rangle.
\end{equation} 
Here  $\varepsilon=1+\sqrt 2$ is the totally positive fundamental unit in the ring of integers of $\Q(\sqrt{2})$. 
 If $\g\in \G(2,5)$ has strictly positive integer entries, then there exist unique $y_1,y_2>0,v>0$ such that 
\begin{equation}\g=\pm \begin{pmatrix}\sqrt y_1&0\\0&1/\sqrt y_1\end{pmatrix}\begin{pmatrix}\cosh v&\sinh v\\ \sinh v&\cosh v \end{pmatrix}\begin{pmatrix}\sqrt y_2&0\\0&1/\sqrt y_2\end{pmatrix},
\end{equation}
 see Lemma \ref{full-hyperbolic-decomposition}.
Geometrically  $v(\g)$ equals half the distance between the infinite vertical geodesic $\mathcal{I}$ from $0$ to $i\infty$  and its image under $\g$. In terms of the entries one shows that $(\cosh(2 v(\g))-1)/10=bc/5$. 

Consider the sequence \begin{equation}\label{def:h}
h=(\psi(\g))=\left(\left(\frac{\log y_1}{2\log \e^2}, \frac{\log y_2}{2\log \e^2}\right)\right)\subseteq (\R\slash \Z)^2,\end{equation}
indexed over the set of all $\g\in H\backslash \G(2,5)\slash H$ with all four entries  strictly positive. Equip this index set with the size function $\nu(\g)=bc/5$.  Good \cite{Good:1983b} and Hejhal \cite[Thm. 8]{Hejhal:1982c} \cite{Hejhal:1978} proved that
\begin{equation}N_h(x)=\frac{10(\log\e)^2}{\pi^2}X+O(X^{2/3}).\end{equation} If there are non-zero eigenvalues of the automorphic Laplacian less than $1/4$, there are additional main terms, but these are not expected to exist in this case, as follows from Selberg's eigenvalue conjecture.
Good \cite[Thm. 4]{Good:1983b} further proved that 
$h$ is equidistributed on $(\R\slash \Z)^2$.
When restricting to primes we prove the following result: 
\begin{thm}\label{main-theorem-hyphyp} Consider the sequence $h$ in \eqref{def:h}. Then 
\begin{equation}\pi_h(x)=C\li(x)+O\left(\frac{x}{\log^Ax}\right),\end{equation}
and $h$ is equidistributed on $(\R\slash \Z)^2$ over primes.
Here   \begin{equation}
    C=\frac{12}{5}\frac{\log \e}{\sqrt{2}}\prod_{p\neq 5}\left(1+\frac{\chi_8(p)}{p(p-1)}\right),\end{equation} where $\chi_8$ is the even primitive Dirichlet character modulo $8$.
\end{thm}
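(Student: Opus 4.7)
The approach reduces the problem to the arithmetic of $K = \Q(\sqrt 2)$, whose ring of integers $\mathcal O_K = \Z[\sqrt 2]$ has class number one and fundamental unit $\varepsilon = 1 + \sqrt 2$. Every element of $\Gamma(2,5)$ has the form
\begin{equation}
\gamma = \begin{pmatrix}\alpha & \sqrt 5 \beta \\ \sqrt 5 \overline\beta & \overline\alpha\end{pmatrix}, \qquad \alpha = x_0 + x_1\sqrt 2,\ \beta = x_2 + x_3\sqrt 2,
\end{equation}
and $\det\gamma = 1$ together with $\nu(\gamma) = bc/5 = p$ prime forces $N_{K/\Q}(\alpha) = 5p+1$ and $N_{K/\Q}(\beta) = p$. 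The entries being strictly positive amounts to $\alpha$ and $\beta$ being totally positive. A direct computation from Lemma~\ref{full-hyperbolic-decomposition} gives $y_1^2 = (\alpha/\overline\alpha)(\beta/\overline\beta)$ and $y_2^2 = (\alpha/\overline\alpha)(\overline\beta/\beta)$, hence
\begin{equation}
\psi_1 + \psi_2 \equiv \lambda(\alpha), \qquad \psi_1 - \psi_2 \equiv \lambda(\beta) \pmod\Z,
\end{equation}
where $\lambda(\cdot) = \log(\cdot/\overline{\cdot})/(2\log\varepsilon^2)$ is the Grossencharacter phase, well-defined mod $\Z$ on principal ideals since $\lambda(\varepsilon^2\cdot) = \lambda(\cdot) + 1$. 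Passage to the double coset by $H\times H$ corresponds exactly to the unit-action on $(\alpha, \beta)$.

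Under this dictionary, up to a bounded factor coming from signs and the ramified primes over $2$ and $5$,
\begin{equation}
\pi_h(x) = \sum_{\substack{p \leq x \\ p \text{ rational prime, split in } K}} r_K(5p + 1), \qquad r_K(n) = \sum_{d \mid n} \chi_8(d).
\end{equation}
I would evaluate this Titchmarsh-divisor-type sum by writing $r_K = 1 * \chi_8$, swapping summations, and counting primes $p \leq x$ satisfying the joint congruence $p \equiv -5^{-1} \pmod d$ (from $d \mid 5p+1$) and $p \equiv \pm 1 \pmod 8$ (the splitting condition). Bombieri--Vinogradov handles the moduli $d \leq x^{1/2}(\log x)^{-B}$ uniformly, while the tail $d > x^{1/2-\eta}$ is controlled by the divisor estimate $r_K(n) \ll_\eta n^\eta$. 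The small-moduli contributions assemble into the claimed Euler product
\begin{equation}
C = \frac{12}{5}\frac{\log\varepsilon}{\sqrt 2}\prod_{p \neq 5}\Bigl(1 + \frac{\chi_8(p)}{p(p-1)}\Bigr),
\end{equation}
with the error $O(x/\log^A x)$ furnished by Siegel--Walfisz.

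For the equidistribution, Weyl's criterion reduces matters to bounding, for each $(m,n) \in \Z^2 \setminus \{(0,0)\}$,
\begin{equation}
S_{m,n}(x) = \sum_{\substack{\gamma:\,\text{entries} > 0 \\ \nu(\gamma) = p \leq x}} e\bigl(m\psi_1(\gamma) + n\psi_2(\gamma)\bigr)
\end{equation}
by $o(\pi_h(x))$. Substituting $m\psi_1 + n\psi_2 = \tfrac{m+n}{2}\lambda(\alpha) + \tfrac{m-n}{2}\lambda(\beta)$ and regrouping, one obtains
\begin{equation}
S_{m,n}(x) = \sum_{\substack{p \leq x \\ p \text{ split in }K}} \Bigl(\xi_{(m-n)/2}(\mathfrak p) + \xi_{(m-n)/2}(\overline{\mathfrak p})\Bigr) \sum_{N\mathfrak a = 5p+1} \xi_{(m+n)/2}(\mathfrak a),
\end{equation}
where $\xi_k(\mathfrak c) = e(k \lambda(\mathfrak c))$ is the Hecke Grossencharacter of $K$ of frequency $k \in \tfrac12\Z$ (for half-integer $k$, interpreted via the narrow ray class group of a suitable modulus; the parity $m + n \equiv m - n \pmod 2$ ensures the product character descends to double cosets). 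Since $(m,n)\neq (0,0)$, at least one of $\xi_{(m\pm n)/2}$ is non-trivial. If $\xi_{(m-n)/2}$ is non-trivial, the outer prime sum enjoys cancellation from PNT for $L(s, \xi_{(m-n)/2})$ while the inner sum is bounded by $r_K(5p+1) \ll_\eta x^\eta$; if only $\xi_{(m+n)/2}$ is non-trivial, the inner character sum gives the cancellation via a Selberg--Delange analysis of $\zeta_K(s) L(s, \xi_{(m+n)/2})$ inserted into the Titchmarsh-divisor machinery. The main technical obstacle is obtaining Bombieri--Vinogradov-type uniformity in the Grossencharacter twist; this requires the standard zero-free regions for Hecke $L$-functions over $K$ and a large sieve for narrow ray class characters, both available in the literature for the fixed base field at hand.
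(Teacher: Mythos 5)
Your reduction to $\Q(\sqrt 2)$ and the identification of $\pi_h(x)$ with a Titchmarsh-divisor sum $\sum_{p\equiv\pm1(8)}\mathcal N_2(5p+1)$ matches the paper's strategy (its Theorem \ref{a cool map} and Proposition \ref{prop:basic-hyperbolic}), but your treatment of the counting part has a genuine gap in the error term. Since $5p+1$ has divisors up to $\sqrt{5p+1}\approx\sqrt{5x}$, after the hyperbola method you must count primes in progressions to moduli up to roughly $\sqrt{5x/2}>x^{1/2}$. Bombieri--Vinogradov only reaches $Q=x^{1/2}\log^{-B}x$, and the leftover range $[x^{1/2}\log^{-B}x,\,\sqrt{5x/2}]$ is \emph{not} negligible: its expected contribution is $\asymp \frac{x}{\log x}\sum_d\varphi(d)^{-1}\asymp \frac{x\log\log x}{\log x}$, and the divisor bound $r_K(n)\ll n^\eta$ does nothing to control it (Brun--Titchmarsh is the right tool there and still only yields $O(x\log\log x/\log x)$). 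This is exactly why the paper invokes the Assing--Blomer--Li extension of Bombieri--Vinogradov to moduli $Q\le x^{1/2+\delta}$ with the extra congruence conditions on $q$ and $n$ (their Theorem 2.1, quoted as Theorem \ref{Assing-Blomer-Li-thm}); without an input of that strength you cannot reach $O(x/\log^A x)$. Your appeal to Siegel--Walfisz for the error is also misplaced, as it only covers fixed or very small moduli.

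The equidistribution argument has a second, independent gap. Your sum is of the shape $\sum_{p\le x}c(p)\,D(5p+1)$ with $c(p)=\xi(\mathfrak p)+\xi(\overline{\mathfrak p})$ and $D(5p+1)=\sum_{N\mathfrak a=5p+1}\xi'(\mathfrak a)$; you cannot combine ``cancellation in $\sum_p c(p)$ from the Hecke PNT'' with the pointwise bound $|D(5p+1)|\ll x^\eta$, because the two factors are correlated and the estimate $|\sum_p c(p)D(5p+1)|\le|\sum_p c(p)|\cdot\max_p|D(5p+1)|$ is simply false. The paper sidesteps this entirely: it takes absolute values, notes that $|U_k(n)|$ is a non-negative multiplicative function dominated by $d(n)$, applies Nair--Tenenbaum (Theorem \ref{nair-tenenbaum}) to $\sum_{p\le x}|U_{m_1-m_2}(p)||U_{m_1+m_2}(5p+1)|$, and then wins a factor $\log^{1-2/\pi}x$ from the Erd\H{o}s--Hall/Halberstam--Richert observation that $|2\cos(k\theta_p)|$ has mean $4/\pi<2$ over primes, so $\sum_{n\le x}|U_k(n)|/n=O(\log^{2/\pi}x)$ for $k\neq0$ (Lemmas \ref{corollary-of-Hecke-equidistribution} and \ref{powerful-bound}). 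The resulting bound $O(x/\log^{2-2/\pi}x)=o(x/\log x)$ suffices against the lower bound from the counting theorem. Your alternative Selberg--Delange/twisted-Titchmarsh route might be made to work for the case where only the inner character is non-trivial, but as sketched it does not handle the case of a non-trivial outer character, and the uniformity issues you flag are precisely what the absolute-value approach avoids.
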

\begin{rem}
    The sequence $h$ in \eqref{def:h} has a natural geometric interpretation: The geodesic segment minimizing the distance from the imaginary axis $\mathcal{I}$  and $\g \mathcal{I}$ meets $\mathcal{I}$ at $iy_1$ and $\g \mathcal{I}$ at $\gamma iy_2^{-1}$. The length of the geodesic corresponding to $H$ is $2\log (\e^2)$. The signed distance from $i$ to  $iy_1$ is $\log y_1$. So Good's theorem proves the (joint) equidistribution of the endpoints of the distance minimizing geodesic segments corresponding to the double cosets $H\backslash \G (2, 5) /H$.
\end{rem}

We can formulate the equidistribution statement entirely in terms of quantities defined using the ring of integers of the real quadratic field $K=\Q(\sqrt{2})$. Let $\sigma(a+\sqrt{2}b)=a-\sqrt{2}b$ be the non-trivial  Galois automorphism, and consider
\begin{equation}
    D_n=D_K(5n+1)\times D_K(n).
\end{equation}
Here $D_K(n)$ is the set of classes of totally positive elements of $\mathcal O_K$ with field norm $n$ modulo the following equivalence relation:  $z_1\sim z_2$ if and only if $z_1=\epsilon^{2m}z_2$ for some $m\in \Z$. Therefore,
\begin{equation}
    D_K(n)=\{z\in \mathcal O_K\vert z\cdot\sigma z=n, z>0, \sigma(z)>0\}\slash \sim.
\end{equation}
For $z\in(z_1,z_2)\in D_n$ denote 
\begin{equation}
\theta'(z)=\left(\frac{\log \abs{\frac{z_1}{\sigma z_1}}}{2\log \e^2}, \frac{\log \abs{\frac{z_2}{\sigma z_2}}}{2\log \e^2}\right),
\end{equation}
and let $M'=\frac{1}{2}\begin{pmatrix}1&1\\1&-1\end{pmatrix}$. Then we may consider the sequence $\mathcal H$ defined to be the (normalized) sum and difference of these quantities $\bmod\, 1$, i.e. 
\begin{equation}
(M'\theta'(z)\bmod \Z^2)\textrm{ indexed over }D_n \textrm{ with } n\in \N, 
\end{equation} and with size function $\nu(z)=n$. 
\begin{thm}
The sequence $\mathcal H$ is equidistributed on $(\R\slash\Z)^2$,    and equidistributed on $(\R\slash\Z)^2$  over primes. 
\end{thm}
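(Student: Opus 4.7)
The plan is to reduce the statement to Good's and Petridis--Risager's equidistribution results for $h$ (i.e., to Theorem~\ref{main-theorem-hyphyp} together with Good's unconditional equidistribution of $h$) via an explicit dictionary between positive-entry double cosets of $\Gamma(2,5)$ and the index set $D_n$. For $\gamma\in\Gamma(2,5)$ with $a,b,c,d>0$, set $\alpha:=a=x_0+x_1\sqrt 2$ and $\beta:=b/\sqrt 5=x_2+x_3\sqrt 2$; both lie in $\mathcal O_K$ and are totally positive, and $ad-bc=1$ becomes $N(\alpha)-5N(\beta)=1$. With $n=\nu(\gamma)=bc/5$ one has $N(\alpha)=5n+1$ and $N(\beta)=n$, so $(\alpha,\beta)$ represents a point of $D_n$.

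A direct calculation using $\sigma\varepsilon=-\varepsilon^{-1}$ shows that left multiplication of $\gamma$ by $\mathrm{diag}(\varepsilon^{2m},\varepsilon^{-2m})$ sends $(\alpha,\beta)\mapsto(\varepsilon^{2m}\alpha,\varepsilon^{2m}\beta)$, and right multiplication by $\mathrm{diag}(\varepsilon^{2m'},\varepsilon^{-2m'})$ sends it to $(\varepsilon^{2m'}\alpha,\varepsilon^{-2m'}\beta)$. The combined $H\times H$-equivalence on pairs is therefore the index-two subgroup of the equivalence defining $D_n$ given by exponent pairs $(k_1,k_2)$ with $k_1\equiv k_2\pmod 2$. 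The induced map from positive-entry double cosets with $\nu=n$ to $D_n$ is hence a two-to-one surjection, with the two preimages interchanged by $(\alpha,\beta)\mapsto(\varepsilon^2\alpha,\beta)$.

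For the angle identification, one reads off the four entries of the hyperbolic decomposition of $\gamma$: equating $a,d,b,c$ with $\alpha,\sigma\alpha,\sqrt 5\beta,\sqrt 5\sigma\beta$ yields $\alpha/\sigma\alpha=y_1y_2$ and $\beta/\sigma\beta=y_1/y_2$, and normalising logarithms by $2\log\varepsilon^2$ gives $\psi(\gamma)=M'\theta'(z)$. Under the double-coset-to-$D_n$ map, the two preimages of any $z\in D_n$ have $\psi$-values differing by $(1/2,1/2)\bmod\Z^2$; this is the only obstruction to $\mathcal H$ being single-valued on $D_n$, and it is absorbed on the natural quotient torus $(\R/\Z)^2/\langle(1/2,1/2)\rangle$, which is isomorphic as a probability space to $(\R/\Z)^2$. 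Consequently the partial sums
\begin{equation*}
\sum_{\substack{z\in D_n\\ n\le x}}f(M'\theta'(z))\quad\text{and}\quad\sum_{\substack{z\in D_n\\ n\le x,\,n\text{ prime}}}f(M'\theta'(z))
\end{equation*}
coincide, up to a factor two, with the corresponding sums $\sum f(\psi(\gamma))$ defining $N_h(x)$ and $\pi_h(x)$. Invoking Good's theorem and Theorem~\ref{main-theorem-hyphyp} then yields both equidistribution statements for $\mathcal H$.

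The main obstacle is purely combinatorial bookkeeping around the two-to-one map and the accompanying $(1/2,1/2)$-ambiguity, which must be carefully formulated for the equidistribution statement on $(\R/\Z)^2$ to be precise. Once the dictionary is in place, no new analytic input is required: the theorem is a direct consequence of the known equidistribution of $h$.
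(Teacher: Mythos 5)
Your proposal is correct and follows essentially the route the paper intends: the paper gives no separate proof of this theorem, treating it as an immediate consequence of the two-to-one dictionary of Theorem \ref{a cool map}, the angle identification in Proposition \ref{prop:basic-hyperbolic} \eqref{prop:basic--hyperbolic-angles} (your $\psi(\g)=M'\theta'(z)$), Good's Theorem \ref{goods-theorem}, and Theorems \ref{we are getting there} and \ref{bounding-weyl-sums-hyperbolic}, which is exactly the reduction you carry out. One point where you are in fact more careful than the paper: you note that $M'\theta'(z)\bmod \Z^2$ is only well defined up to translation by $(1/2,1/2)$ (the two preimages under the two-to-one map of \eqref{two-matrices} differ by precisely this shift, reflected in the sign $(-1)^{n_1+n_2}$ computed before \eqref{kloosterman-two circles}), so only Weyl frequencies $(n_1,n_2)$ with $n_1\equiv n_2\pmod 2$ are intrinsically meaningful; your passage to the quotient torus --- equivalently, to the joint equidistribution of $(\theta'(z_1),\theta'(z_2))$ on $(\R\slash\Z)^2$ --- is the correct way to make the statement precise, and matches the vanishing of $S_h(n_1,n_2,n)$ for $n_1\not\equiv n_2\pmod 2$ in the paper.
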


\begin{rem}
In a nutshell the main theorems are proved by translating the statistics of the problem in question to a number field setting, where we can apply Hecke's theory of Gr\"ossencharacters, and extensions thereof. We then combine  with various bounds on sums involving multiplicative functions due to Nair and Tenenbaum, and use Weyl's equidistribution criterium.

In Section \ref{sec:bounds-multiplicative} we state the relevant various bounds on sums of multiplicative functions. In Section \ref{sec:elliptic} we translate the elliptic case to statistics of Gaussian integers on two circles, and analyze the relevant Weyl sums leading to Theorem \ref{hyperbolic-equidistribution-prime-version-III}. In Section \ref{sec:hyperbolic} we translate the hyperbolic case to statistics of the integers of the real quadratic field $\Q(\sqrt{2})$ on two hyperbolas. The counting problem can then be translated to an analogue of the Titchmarsh divisor problem, which we solve using recent results by Assing, Blomer and Li. The equidistribution result can be deduced using the techniques of Section \ref{sec:bounds-multiplicative}. This leads to a proof of Theorem \ref{main-theorem-hyphyp}.

\end{rem}
\section{Bounding sums of multiplicative functions} \label{sec:bounds-multiplicative}
A crucial ingredient in our proofs of equidistribution over primes are certain bounds on sums of multiplicative function. In this section we recall a few useful results in this direction.

Nair and Tennenbaum \cite{NairTenenbaum:1998} developed a general and very flexible method to harvest the power of multiplicativity  to bound specific sums of multiplicative or approximately multiplicative non-negative functions of various types. Here we state a small part of a simplified version of \cite[Thm 3]{NairTenenbaum:1998}:
\begin{thm}\label{nair-tenenbaum}Consider 
two non-negative multiplicative functions $g_1, g_2$ satisfying $g_i(n)\leq d(n)$ and $a_i,b_i$ satisfying $(a_i,b_i)=1$ and $b_1a_2\neq b_2a_1$. 
Then \begin{align}\label{nair-tennebaum}
    \sum_{p\leq x }g_1&(\abs{a_1p+b_1})g_2(\abs{a_2p+b_2})\\ & \ll_{a_i,b_i}\frac{x}{\log(x)}\prod_{2<p\leq x}\left(1-\frac{2}{p}\right)\sum_{n_1\leq x}\frac{g_1(n_1)}{n_1}\sum_{n_2\leq x}\frac{g_2(n_2)}{n_2}.
\end{align}
\end{thm}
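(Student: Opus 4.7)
The plan is to reduce the double-form sum to a count of primes in arithmetic progressions with two simultaneous divisibility conditions, and then invoke a uniform Brun--Titchmarsh estimate followed by an Euler-product manipulation in the spirit of Shiu. Writing each $g_i$ as a Dirichlet convolution $g_i = \mathbf{1} \ast h_i$ by M\"obius inversion (so that $h_i$ is multiplicative and, at squarefree arguments, dominated by a divisor-type function), the sum unfolds as
\[
\sum_{p \leq x} g_1(\abs{a_1 p + b_1}) g_2(\abs{a_2 p + b_2}) = \sum_{d_1, d_2} h_1(d_1) h_2(d_2)\, \pi(x; d_1, d_2),
\]
where $\pi(x; d_1, d_2)$ counts primes $p \leq x$ with $d_i \mid a_i p + b_i$. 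The hypothesis $(a_i, b_i)=1$ makes each condition a single residue class modulo $d_i$, and the non-degeneracy $b_1 a_2 \neq b_2 a_1$ guarantees that for $(d_1, d_2)$ coprime to $2 a_1 a_2 (a_1 b_2 - a_2 b_1)$ the joint condition remains a single arithmetic progression modulo $[d_1, d_2]$.

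I would then apply Brun--Titchmarsh to obtain $\pi(x; d_1, d_2) \ll x/(\phi([d_1, d_2]) \log(x/[d_1, d_2]))$ uniformly for $[d_1, d_2] \leq x^{1-\delta}$, handle the tail using the trivial divisor bound, and substitute back. The main term factorises as an Euler product of shape $\prod_{p \leq x}(1 + \beta(p)/(p-1))$, where at a generic prime $\beta(p)$ collects contributions from each linear form independently, realising a sieve of dimension two. Pulling out the corresponding density factor $\prod_{2 < p \leq x}(1 - 2/p)$ via a Mertens-type calculation and separating the $p$-local contributions of the two linear forms, the remaining sums collapse to $\prod_{p \leq x}(\sum_{k \geq 0} g_i(p^k)/p^k)$, which are dominated by $\sum_{n_i \leq x} g_i(n_i)/n_i$ term by term.

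The principal obstacle is a dual one: securing a Brun--Titchmarsh bound uniform in the modulus $[d_1, d_2]$ up to a level of distribution large enough to support the main term, while simultaneously keeping the weighted sum over $h_1(d_1) h_2(d_2)$ absolutely convergent. This requires a careful truncation of the $d_i$-ranges together with a Rankin-type trick to absorb the contribution of $d_i$ with large squareful part, since $h_i$ does not vanish on prime powers and the $p^k$-factors must be controlled separately. The finitely many primes dividing $2 a_1 a_2 (a_1 b_2 - a_2 b_1)$ contribute a bounded perturbation that is absorbed into the implicit constant, which is precisely what produces the $a_i, b_i$ dependence signalled by $\ll_{a_i, b_i}$; this strategy is exactly what Nair and Tenenbaum execute in \cite{NairTenenbaum:1998}, so in practice I would simply cite their theorem.
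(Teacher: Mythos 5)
The paper gives no proof of this statement at all: it is presented as a quoted ``small part of a simplified version of'' Theorem 3 of Nair and Tenenbaum \cite{NairTenenbaum:1998}, so your closing move of simply citing that theorem is exactly what the paper does, and that is the correct resolution. One caution about the sketch you give before the citation: the direct route of writing $g_i=\mathbf{1}\ast h_i$ and applying Brun--Titchmarsh to $\pi(x;d_1,d_2)$ is genuinely lossier than the stated bound --- already for $g_1=g_2=d$ one has $h_i=\mathbf{1}$, the convolution method yields roughly $\frac{x}{\log x}\sum_{d_1,d_2}\varphi([d_1,d_2])^{-1}\asymp x\log^2x$, whereas the right-hand side of \eqref{nair-tennebaum} is $\asymp x\log x$, so a logarithm is lost and no truncation or Rankin trick recovers it. The actual Nair--Tenenbaum argument avoids this by a different decomposition (extracting a small smooth divisor of each argument and running a Shiu-type/fundamental-lemma sieve on the complementary part), which is why, as you say, one should in practice just invoke their theorem rather than the convolution heuristic.
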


It is well-known that
\begin{equation}\prod_{p\leq x}\left(1-\frac{2}{p}\right)=O((\log x)^{-2}),\end{equation}
e.g. it follows easily from the prime number theorem. Therefore, the right-hand side of \eqref{nair-tennebaum} is bounded by a constant times \begin{equation}\frac{x}{\log^3(x)}\sum_{n_1\leq x}\frac{g_1(n_1)}{n_1}\sum_{n_2\leq x}\frac{g_2(n_2)}{n_2}.\end{equation}

Another useful bound is the following weak form  of a Halberstam--Richert inequality \cite{HalberstamRichert:1979}. We quote from \cite[Thm 7.]{FainsilberKurlbergWennberg:2006}.
\begin{thm}\label{weak-halberstam-richert}
Let $f$ be a non-negative multiplicative function satisfying that \begin{equation}\displaystyle\sum_{n\leq x}f(n)=O(x),\textrm{ and }f(p^k)=O(k)\end{equation} for all primes $p$ and $k\geq 1$. Then 
\begin{equation}
\frac{1}{x}\sum_{n\leq x}f(n)\ll \exp\left(\sum_{p\leq x}\frac{f(p)-1}{p}\right)+\frac{1}{\log x}.
\end{equation}
The implied constant in the conclusion only depends on the implied constants of the assumptions.
\end{thm}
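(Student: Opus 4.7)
The plan is a classical Halberstam--Richert / Levin--Fainleib iteration argument combined with Mertens' theorem. Set $S(x) := \sum_{n \leq x} f(n)$.

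\emph{Setting up the recursion.} By partial summation,
$$S(x) \log x = \sum_{n \leq x} f(n) \log n + \int_1^x \frac{S(t)}{t}\, dt,$$
and the hypothesis $S(t) = O(t)$ makes the integral $O(x)$. Expanding $\log n = \sum_{d \mid n} \Lambda(d)$ and interchanging summation,
$$\sum_{n \leq x} f(n) \log n = \sum_{p^k \leq x} \log p \sum_{n \leq x,\; p^k \mid n} f(n).$$
Writing $n = p^\ell m$ with $(m,p) = 1$ and $\ell \geq k$, the multiplicativity of $f$, the bound $f(p^\ell) = O(\ell)$, and the hypothesis $\sum_{m \leq y} f(m) = O(y)$ together show that the contributions with $\ell \geq 2$ or $k \geq 2$ are $O(x)$ (convergence of $\sum_p \log p/p^2$). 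What remains yields the recursive inequality
$$S(x) \log x \;\leq\; \sum_{p \leq x} f(p) \log p \cdot S(x/p) + O(x).$$

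\emph{Solving the recursion.} Writing $F(x) := S(x)/x$, the above becomes
$$F(x) \log x \;\leq\; \sum_{p \leq x} \frac{f(p) \log p}{p}\, F(x/p) + O(1).$$
The central step is to show that the candidate envelope
$$\Phi(x) \;:=\; \frac{1}{\log x} \prod_{p \leq x}\left(1 + \frac{f(p)}{p}\right) + \frac{1}{\log x}$$
is, up to a multiplicative constant, a supersolution of this recursion. Verifying this reduces to an induction on $x$: primes $p \leq x^{1/2}$ contribute in the main term (using $\log(x/p) \asymp \log x$ and telescoping of the Euler product, with $\Phi(x/p) \asymp \Phi(x)/(1 + f(p)/p)$), while primes $p > x^{1/2}$ are handled directly via the trivial bound $S(x/p) \ll x/p$ from the hypothesis. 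One concludes $F(x) \ll \Phi(x)$.

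\emph{Converting via Mertens.} Finally, Mertens' theorem $\prod_{p \leq x}(1 - 1/p)^{-1} \asymp \log x$ gives
$$\frac{1}{\log x} \prod_{p \leq x}\left(1 + \frac{f(p)}{p}\right) \;\asymp\; \prod_{p \leq x}\left(1 + \frac{f(p)}{p}\right)\left(1 - \frac{1}{p}\right) \;\asymp\; \exp\left(\sum_{p \leq x}\frac{f(p) - 1}{p}\right),$$
using $\log\bigl[(1 + f(p)/p)(1 - 1/p)\bigr] = (f(p) - 1)/p + O(1/p^2)$ and convergence of $\sum_p 1/p^2$. Together with the residual $1/\log x$, this is exactly the RHS of the theorem, and the implied constants depend only on those in the hypotheses.

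The main obstacle is verifying the supersolution property: one must ensure that iterating the recursion does not inflate the leading constant. Primes $p > x^{1/2}$, where $\log(x/p) \not\asymp \log x$ so the naive bound $F(x/p) \leq c\, \Phi(x/p)$ is wasteful, require a separate treatment exploiting the direct hypothesis $S(y) = O(y)$ rather than the inductive envelope.
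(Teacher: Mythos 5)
First, note that the paper does not prove this statement at all: it is quoted verbatim from Fainsilber--Kurlberg--Wennberg [Thm.\ 7], itself a weak form of the Halberstam--Richert inequality, so there is no internal proof to compare against. Your derivation of the recursion
$$S(x)\log x\le\sum_{p\le x}f(p)\log p\,S(x/p)+O(x)$$
is correct and is the standard opening move. The gap is in ``solving the recursion'': the supersolution/induction step you defer to is not a technicality --- it genuinely fails. For the primes $p>\sqrt x$, the trivial bound $S(x/p)\ll x/p$ gives $\sum_{\sqrt x<p\le x}\frac{f(p)\log p}{p}\cdot x\asymp x\log x$, hence a contribution of size $x$ to $S(x)$; but the target $x\Phi(x)$ can be as small as $x/\log x$ (e.g.\ when $f$ vanishes on all small primes), so the trivial bound loses a full factor of $\log x$ exactly where you rely on it. For the primes $p\le\sqrt x$, plugging $F(x/p)\le C\Phi(x/p)\le 2C\Phi(x)$ into the recursion yields $F(x)\le \frac{2C\Phi(x)}{\log x}\sum_{p\le\sqrt x}\frac{f(p)\log p}{p}+\cdots$, and $\frac{1}{\log x}\sum_{p\le x}\frac{f(p)\log p}{p}$ is a constant $\kappa$ that need not be $<1$ (it is $\approx 2$ for $f=d$, where the conclusion is sharp); the induction therefore multiplies the constant by roughly $\kappa$ at each application and never closes. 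The claimed relation $\Phi(x/p)\asymp\Phi(x)/(1+f(p)/p)$ is also not correct: the ratio $\Phi(x/p)/\Phi(x)$ involves the primes in $(x/p,x]$ and the ratio of logarithms, not the single factor at $p$.

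The standard way to close the argument --- and the one behind the cited result --- avoids iteration entirely. After writing $\sum_{n\le x}f(n)\log n\le\sum_{p^k\le x}f(p^k)\log(p^k)\sum_{m\le x/p^k}f(m)$, \emph{interchange the two summations} and apply a Chebyshev-type bound for each fixed $m$: since $f(p)=O(1)$, one has $\sum_{p\le y}f(p)\log p\ll y$, and $\sum_{p^k\le y,\,k\ge2}f(p^k)\log p^k\ll y$ using $f(p^k)=O(k)$, whence
$$\sum_{n\le x}f(n)\log n\;\le\;\sum_{m\le x}f(m)\sum_{p^k\le x/m}f(p^k)\log p^k\;\ll\;x\sum_{m\le x}\frac{f(m)}{m}\;\ll\;x\prod_{p\le x}\left(1+\frac{f(p)}{p}\right).$$
Combined with $\sum_{n\le x}f(n)\log(x/n)=O(x)$ and your (correct) Mertens conversion, this gives the theorem in one step, with no induction and no separate treatment of large primes. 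Without this interchange (or a genuinely more elaborate Wirsing-type iteration), your proof as written does not go through.
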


\section{The elliptic case}\label{sec:elliptic}

In this section we consider the modular group $\G=\slz$. We  recall the Cartan decomposition and its relation to angles. 
\subsection{Cartan decomposition and angles between lattice points}
For a group element $\g\in\sl=G$ the point $\g i\in \H$ is determined by the hyperbolic distance $d_\H(i, \g i )$ and the angle $\nu(\gamma)$ between the vertical geodesic from $i$ to $i\infty$ and the geodesic between  $i$ and $\g i$. To give a clear geometric picture we map the upper half-plane $\H$ to the Poincar\'e disc $\mathbb D$ using the Cayley map \begin{equation}f(z)=\frac{z-i}{z+i}.\end{equation} This is a holomorphic diffeomorphism with $f(i)=0$  Since it is conformal, it preserves angles. It maps the vertical geodesic from $i$ to the geodesic $[0,1)$ in the Poincar\'e disc, so that $\nu(\gamma)$ is the argument of the complex number $f(\g i)$, i.e. \begin{equation}f(\g i)=\abs{f(\g i)}e^{i\nu(\g)}.\end{equation}
    By the Cartan decomposition $G=KAK$  we may write 
\begin{equation}\g=k(\theta_1(\g))a(e^{-r})k(\theta_2(\g)),\end{equation}
where 
\begin{equation}
k(\theta)=\begin{pmatrix}\cos \theta &\sin\theta \\
-\sin \theta &\cos \theta
\end{pmatrix}\in K=\hbox{SO}_2(\mathbb{\R}),
\end{equation}
which is the stabiliser of $i$ in $\sl$, and 
\begin{equation}
a(e^{-r})=\begin{pmatrix}
    e^{-r/2}&\\&e^{r/2}
\end{pmatrix}\in A=\left\{\begin{pmatrix}a&\\&a^{-1}\end{pmatrix}\vert\, a>0\right\}.
\end{equation}
Here $r=d_\H(\g i,i)\geq 0$ is uniquely determined and, if $r>0$, the numbers $\theta_1(\g), \theta_2(\g)$ are determined modulo $\pi$, while $\theta_1(\g)+\theta_2(\g)$ is determined modulo $2\pi$.
A straightforward computation shows that \begin{equation}
    f(\g i)=\frac{e^{-r}-1}{e^{-r}+1}e^{2i\theta_1(\g)},
\end{equation}
so $2\theta_1(\g)=\nu(\g)$. We note also that 
\begin{equation}\g^{-1}=k(\pi/2-\theta_2(\g))a(e^{-r})k(-\pi/2-\theta_1(\g)),
\end{equation} so $2\theta_2(\g)=\pi-2\theta_1(\g^{-1})=\pi-\nu(\g^{-1})$.
When studying the joint distribution of $\theta_1(\g),\theta_2(\g) \bmod \pi$ we want to consider the corresponding Weyl sums which in this case are
\begin{equation}\label{def:S_e}
S_e(m_1,m_2,n)=\sum_{\substack{\g\in\G\\ \nu_\H(\g)=n}}\exp({i(2\theta_1(\g)m_1+2\theta_2(\g)m_2)}).
\end{equation}
This may be thought of as a Kloosterman type sum related to the Cartan decomposition, in the same way that the standard Kloosterman sum is related to a Bruhat type decomposition $G=NAN\cup N\omega AN$, where $N$ consists of upper triangular matrices with 1 on the diagonal and $\omega=k(-\pi/2)$.

It is convenient to mod out on the right and left by of stabiliser $\G_i$ of $i$ in $\G$ acting on $\H$. One finds that $\G_i$ is cyclic of order $4$ generated by the elliptic element \begin{equation}
    \g_i=\begin{pmatrix}
    0 &1\\-1&0
\end{pmatrix}=k(\pi/2),
\end{equation}  so that \begin{equation}\label{eq:double-representation}\g_i^{j_1}\g
\g_i^{j_2}=k(\theta_1(\g)+j_1\pi/2)a(e^{-r})k(\theta_2(\g)+j_2\pi/2).\end{equation} Since if $\g\neq \pm I$, the double coset $\G_i\g \G_i$ contains precisely $8$ elements; these can e.g. be parametrised by $\g_i^{j_1}\g\g_i^{j_2}$ for $j_1=0,\ldots ,3$, and $j_2=0,1$. Another possible parametrisation is $j_1=0,1$, $j_2=0,\ldots ,3$. 
Fixing representatives for the double coset $\G_i\backslash \G\slash \G_i$ and using \eqref{eq:double-representation}, we find that 
\begin{align}
\label{eq:reduce to quotient}    S_e(m_1,m_2,n)=
    &\delta_{2\mid m_1}\delta_{2\mid m_2}8 S'_e(m_1,m_2,n),
\end{align}
where  
\begin{equation}
S'_e(m_1,m_2,n)=\sum_{\substack{\g\in\G_i\backslash\G\slash\G_i\\ \nu_\H(\g)=n}}\exp({i(2\theta_1(\g)m_1+2\theta_2(\g)m_2)}).
\end{equation}
Note that the relation \eqref{eq:reduce to quotient} shows that we only need to bound the Kloosterman-type sum $S'_e(m_1,m_2,n)$ when $m_1,m_2$ are even. It also shows that in this case $S'_e(m_1,m_2,n)$ is independent of the choice of representatives for the double coset.

\subsection{The Gaussian integers}
It is a remarkable fact, observed in part in \cite{FriedlanderIwaniec:2009a} and \cite{ChatzakosKurlbergLesterWigman:2021} that for $\G=\slz$ the data $\{(\nu_\H(\g), \theta_1(\g), \theta_2(\g))\vert\, \g \in \G\}$ can be parametrised in terms of angles and lengths of Gaussian integers on two Euclidean circles with distance $4$ apart. 

Before we explain this observation in detail, we recall some results about Gaussian integers. For more information the reader may consult \cite{IrelandRosen:1990a, IwaniecKowalski:2004a}.

The Gaussian integers $\Z[i]$ is the ring of integers of the imaginary quadratic field $\Q(i)$. The field is equipped with two important multiplicative maps; complex conjugation $z\mapsto \overline z$, and the norm map $N(z)=z\overline z$. It is a Euclidean domain with respect to this norm. The group of units satisfies \begin{equation}\Z[i]^\times=\{z\in \Z[i]\vert\, N(z)=1\}=\{\pm 1, \pm i\}.\end{equation} The ring of integers is a unique factorisation domain and the irreducible elements are, up to multiplication by a unit, 
\begin{enumerate}[label=\roman*)]
\item  $(1+i)$,
\item $\pi_p, \bar\pi_p, \textrm{where }\pi_p=x+iy\textrm{ with   }x^2+y^2=p\equiv 1\bmod 4$, 
\item $p=3 \bmod 4$. 
\end{enumerate}

A Gaussian integer $\alpha\in \Z[i]$ is called \emph{primary}  if $\alpha=1 \bmod (1+i)^3$. Note that with this definition the only primary unit is 1. For  $\alpha$ not divisible by $(1+i)$, there exists a unique unit $u$ such that $u\alpha$ is primary. Every primary element can be written uniquely as a product of primary irreducible elements. A primary element $z$  satisfies $N(z)=1\bmod 4$. 

We fix a specific set of irreducible elements by fixing $(1+i)$ and for the other irreducible elements we choose the primary irreducible. Note that $\pi$ is primary if and only if $\overline \pi$ is primary. 
For any of these specific irreducible elements $\pi$ we write \begin{equation}
    \pi=\abs{\pi}e^{i\theta_\pi}.
\end{equation}
Note that, if $p=3\bmod 4$, then the corresponding primary irreducible is $\pi=-p$, and $\theta_\pi=\pi$. If, on the other hand, $p=1\bmod 4$ and $p=\pi\overline{\pi}$ is the factorisation into primary irreducibles we denote $\theta_\pi=\theta_p$ , $\theta_{\overline{\pi}}=-\theta_p$. To disambiguate the choice of $\pi$ vs $\overline \pi$ we may assume $\theta_p>0$. 

The angles $\theta_\pi$ of the primary irreducibles are asymptotically equidistributed modulo $2\pi$.
This can be seen by using Weyl's equidistribution criterion. To see why this applies one considers the set of primitive Hecke Gr\" ossencharacters described in \cite[Ex 1. p. 62]{IwaniecKowalski:2004a}, 
the analytic properties of the corresponding Hecke $L$-function  $L(s, \xi_k)$,see \cite[Thm 3.8]{IwaniecKowalski:2004a},
and a standard zero-free region for these functions. 

\subsection{Analysis on Weyl sums in \texorpdfstring{$\mathbb{Z}[i]$}{Z[i]}}
We now describe certain Weyl sums related the Gaussian integers.
Consider
\begin{align}
    W_m(n)&=\frac{1}{4}\sum_{\substack{z\in \Z[i]\\N(z)=n}}\left(\frac{z}{\abs{z}}\right)^m, &&\textrm{for $n\in \N$,}\\
W^P_m(n)&=W^P_m(2^l)W^P_m(n'),&&\textrm{if $n=2^ln'$ with $n'$ odd,}\\
\intertext{where}
W^P_m(n)&=\sum_{\substack{z\in \Z[i]\\N(z)=n\\ z\text{ primary}}}\left(\frac{z}{\abs{z}}\right)^m&&\textrm{ for $n$ odd.}\\
    W^P_m(2^l)&=\left(\frac{1+i}{\sqrt{2}}\right)^{lm}=e^{i\frac{\pi}{4}lm}.
    \end{align}
Note that for $n$ odd \begin{equation}W^P_m(n)=\sum_{\substack{z\in \Z[i]\slash \Z[i]^\times\\N(z)=n}}\left(\frac{z}{\abs{z}}\right)^m\end{equation}
for a specific choice of representatives of $\Z[i]\slash \Z[i]^\times$.
 It follows that 
\begin{align}
    W_m(n)&=\frac{1}{4}\sum_{u\in \Z[i]^\times}u^{m}W^P_m(n) =\begin{cases}W^P_m(n) ,&\textrm{ if }m=0\bmod 4\\
    0,& \textrm{otherwise.}
    \end{cases}
\end{align}

The functions $W_m(n)$ and $W^P_{m}(n)$ are multiplicative, i.e.
\begin{align}
    W_m(n_1n_2)&=W_m(n_2)W_m(n_2) ,\\
    W^P_m(n_1n_2)&=W^P_m(n_1)W_m(n_2)  ,  
\end{align}
if $(n_1,n_2)=1$.
The statement for $W_m$ follows from unique factorisation into irreducible elements in $\Z[i]$ up to associates, and the statement for $W_m^P$ follows from unique factorisation into primary irreducibles. Furthermore we have the trivial bound 
\begin{equation}\label{a trivial-bound}
    \abs{W^P_m(n)}\leq W^P_0(n)=r(n)/4\leq d(n)=O_\e(n^\e), \textrm { as }n\to\infty.
\end{equation}

We also note that, if $p=3\bmod 4$, then  
\begin{equation}\label{eq: W^P(p^l):p =3 mod 4}
    W^P_m(p^l)=\begin{cases}
    0,&\textrm{if $l$ is odd},\\
    1,&\textrm{if $l$ is even,}
    \end{cases}
\end{equation}
and, if $p=1\bmod 4$, then 
\begin{equation}
    W^P_m(p^l)=\sum_{j=0}^le^{i(2j-k)m\theta_p}.
\end{equation}
In particular we have for any odd rational prime
\begin{equation}
W^P_m(p)=\begin{cases}
2\cos(m\theta_p),&\textrm{if }p=1\bmod 4,\\
0, &\textrm{if }p=3\bmod 4.\\
\end{cases}
\end{equation}
For $m=0$, the result \eqref{Euclidean-counting} of Gauss gives 
\begin{equation}
\sum_{n\leq x} W^P_0(n)=\frac{\pi}{4}x+O(x^{1/2}).
\end{equation}

\begin{prop}\label{Average-Euclidean Weyl sum bounded} Let $m$ be an even non-zero integer. Then
\begin{equation}
    \sum_{n\leq x}\abs{W^P_m(n)}=O\left(x\left(\frac{\log^2\abs{m}}{\log x}\right)^{1 -2/\pi}\right).
\end{equation}
\end{prop}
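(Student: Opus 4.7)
The plan is to apply the weak Halberstam--Richert bound (Theorem \ref{weak-halberstam-richert}) to the non-negative multiplicative function $f(n)=|W_m^P(n)|$, and then to carefully analyse the resulting sum $\sum_{p\leq x}(|W_m^P(p)|-1)/p$ by Fourier-expanding $|\cos|$ and invoking Mertens-type bounds for Hecke L-functions to extract the correct $m$-dependence.

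First, I would verify that $f$ satisfies the hypotheses of Theorem \ref{weak-halberstam-richert}. Multiplicativity is noted in the excerpt, and from $|W_m^P(n)|\leq W_0^P(n)=r(n)/4$ together with Gauss's identity $\sum_{n\leq x}r(n)=\pi x+O(x^{1/2})$ we get $\sum_{n\leq x}f(n)=O(x)$. The bound $f(p^k)=O(k)$ follows from the explicit formulas $|W_m^P(2^l)|=1$, $|W_m^P(p^l)|\leq 1$ for $p\equiv 3\bmod 4$, and $|W_m^P(p^l)|\leq l+1$ for $p\equiv 1\bmod 4$. Theorem \ref{weak-halberstam-richert} then yields
\begin{equation}
\tfrac{1}{x}\sum_{n\leq x}|W_m^P(n)|\ll\exp\Bigl(\sum_{p\leq x}\tfrac{|W_m^P(p)|-1}{p}\Bigr)+\tfrac{1}{\log x}.
\end{equation}

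Next, I would split the sum over primes by their residue modulo $4$. The contribution of $p=2$ is $O(1)$, the primes $p\equiv 3\bmod 4$ contribute $-\tfrac{1}{2}\log\log x+O(1)$ by Mertens in arithmetic progressions, and the remaining primes give $\sum_{p\equiv 1\bmod 4,\,p\leq x}\frac{2|\cos(m\theta_p)|-1}{p}$. I would now insert the uniformly convergent Fourier expansion
\begin{equation}
2|\cos y|=\tfrac{4}{\pi}+\tfrac{8}{\pi}\sum_{k\geq 1}\tfrac{(-1)^{k+1}}{4k^2-1}\cos(2ky).
\end{equation}
The constant term contributes $(\tfrac{4}{\pi}-1)\cdot\tfrac{1}{2}\log\log x+O(1)$, which combined with the $p\equiv 3\bmod 4$ contribution gives the expected main term $(2/\pi-1)\log\log x=-(1-2/\pi)\log\log x$. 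This is where the exponent $1-2/\pi$ enters: exponentiating produces the factor $(\log x)^{-(1-2/\pi)}$.

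The harder step, and the main obstacle, is to control the $k$-sum of the Fourier coefficients uniformly in $m$. For each $k\geq 1$ with $km\neq 0$, the sum $\sum_{p\equiv 1\bmod 4,\,p\leq x}\cos(2km\theta_p)/p$ can be written in terms of Hecke Grössencharacters $\xi_{2km}$; standard Mertens-type estimates for $L(s,\xi_{2km})$ using a zero-free region and the fact that the analytic conductor grows polynomially in $|km|$ give a bound of the shape $O(\log\log(|km|+2))$, uniformly in $x$. Substituting, the weighted sum $\sum_{k\geq 1}\frac{\log\log(|km|+2)}{4k^2-1}$ splits at $k=|m|$ and evaluates to $O(\log\log|m|)$, with an implicit constant that I would track so as to produce a coefficient at most $2(1-2/\pi)$ in front of $\log\log|m|$. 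Putting everything together gives
\begin{equation}
\sum_{p\leq x}\tfrac{|W_m^P(p)|-1}{p}\leq -(1-2/\pi)\log\log x+2(1-2/\pi)\log\log|m|+O(1),
\end{equation}
and exponentiating yields $\bigl(\log^2|m|/\log x\bigr)^{1-2/\pi}$, which multiplied by $x$ and combined with the negligible $x/\log x$ term from Theorem \ref{weak-halberstam-richert} proves the proposition. The delicate point is making the constant in the Fourier-coefficient bound small enough to land on precisely the exponent $2(1-2/\pi)$ rather than some larger multiple of $\log\log|m|$; this is where the specific Fourier structure of $|\cos|$ and the $1/k^2$ decay of its Fourier coefficients are essential.
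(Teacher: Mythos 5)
Your overall skeleton matches the paper's: the paper proves this by reducing to \cite[Prop.~6]{FainsilberKurlbergWennberg:2006} (which is itself an application of Theorem \ref{weak-halberstam-richert} to $f=\abs{W^P_m}$, exactly as you set it up), and the only new input is the bound
\begin{equation}
\sum_{\substack{p\leq x\\ p=1\bmod 4}}\frac{\abs{\cos(m\theta_p)}}{p}\leq \frac{1}{\pi}\log\log x+\Bigl(1-\frac{2}{\pi}\Bigr)\log\log \abs{m}+O(1),\qquad \log \abs{m}\leq b\sqrt{\log x},
\end{equation}
which the paper gets by running the Erd\H{o}s--Hall argument: write $\abs{\cos v}$ as an integral of $\sin$, apply an effective Hecke-type equidistribution theorem for the angles $\theta_p$, bound the primes $p\leq w$ trivially and the rest by partial summation, and choose the cutoff $w$ with $\log w\asymp \log^2\abs{m}$ so that the error term is absorbed. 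Your verification of the Halberstam--Richert hypotheses and your computation of the main term $-(1-2/\pi)\log\log x$ are both correct.

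The gap is in your treatment of the $m$-dependence, and you have correctly identified where it is but not closed it. With the standard unconditional inputs you invoke (upper bound $L(1+\delta,\xi)\ll\log Q$ and the lower bound from the zero-free region), the best per-frequency estimate is $\abs{\sum_{p\leq x}\cos(2km\theta_p)/p}\leq c\log\log(\abs{km}+2)+O(1)$ with $c\geq 1$; indeed even the trivial regime $x\leq\exp(\log^2\abs{km})$ already forces $c\geq 1$ via $\tfrac12\log\log x$. Feeding this into your Fourier expansion, since $\sum_{k\geq 1}(4k^2-1)^{-1}=\tfrac12$, the coefficient of $\log\log\abs{m}$ you obtain is $\tfrac{8}{\pi}\cdot\tfrac12\cdot c\geq \tfrac{4}{\pi}\approx 1.27$, whereas the proposition requires $2(1-2/\pi)=2-4/\pi\approx 0.73$; to land on the stated exponent you would need $c\leq\tfrac{\pi}{2}-1\approx 0.57$ per frequency, which is not available unconditionally. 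So as written your argument proves the proposition with $\log^2\abs{m}$ replaced by a higher power of $\log\abs{m}$ --- enough for the paper's application in Theorem \ref{crucialbounds}, where implied constants may depend on $m$, but not the statement as given. The fix is to abandon the frequency-by-frequency $L$-function bounds and instead use the single cutoff of Erd\H{o}s--Hall: the factor $2$ in $\log^2\abs{m}$ comes precisely from $\log\log w=2\log\log\abs{m}+O(1)$ at the threshold $\log w=(b^{-1}\log\abs{m})^2$ dictated by the error term $O(xe^{-b\sqrt{\log x}})$ in the effective equidistribution theorem, combined with the saving $(\tfrac12-\tfrac1\pi)$ per prime below the threshold.
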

\begin{proof} 

When $m=0\bmod 4$ we have $W_m(n)=W_m^P(n)$ and in this case the claim is \cite[Prop. 6]{FainsilberKurlbergWennberg:2006}. 

To handle the general situation we note the following. For every even $m$ we have that, if $z_j\in \Z[i]$,  $j=1,2$ are associated, and have angles determined by $z_j=\abs{z}e^{i\theta(z_j)}$, then there exists an  $r=0,\ldots,3$ such that $\theta(z_1)=\theta (z_2)+r\pi/2 \bmod 2\pi$. It follows that $\abs{\cos(m\theta(z_1))}=\abs{\cos(m\theta(z_2))}$. In particular, if $\nu_p=\arctan(y/x)$, where $p=x^2+y^2=1\bmod 4$ with $0\leq y\leq x$, then  we have \begin{equation}\abs{\cos(\theta_p)}=\abs{\cos(m\nu_p)}.\end{equation}
With this observation, see also  \cite[p. 2367]{ChatzakosKurlbergLesterWigman:2021} we can repeat the argument in \cite[p.91--92] {ErdosHall:1999}  for every even non-zero $m$ and find 
\begin{equation}
\sum_{\substack{p\leq x\\p=1\bmod 4}}\frac{\abs{\cos(m\theta_p)}}{p}\leq \frac{1}{\pi}\log\log x +(1-2/\pi)\log\log m+O(1),
\end{equation}
when $\log m\leq b\sqrt{\log x}$. Notice that our $\nu_p$ is $\theta(p)$ in \cite{ErdosHall:1999}. Once this has been established the proof in \cite[Prop. 6]{FainsilberKurlbergWennberg:2006} carries through verbatim and gives the result. 
\end{proof}

\subsection{Parametrisation of \texorpdfstring{$\G$}{G} in terms of Gaussian integers}
We can now explain a parametrisation of the elements of $\G$ in terms of Gaussian integers with various norms.  The basic map \eqref{eq:basic-map} below is a variation of the one in  \cite{FriedlanderIwaniec:2009a},\cite{ChatzakosKurlbergLesterWigman:2021}.

For $\g=\begin{pmatrix}
    a&b\\c&d
\end{pmatrix}\in \G$ we define two Gaussian integers
\begin{align}
\begin{split}
    \label{eq:basic-map}
    z_1(\g)=(a+d)+i(b-c)\in \Z[i],\\
    z_2(\g)=(a-d)-i(b+c)\in \Z[i] .
\end{split}
\end{align}
It is straightforward to verify the following proposition whose proof we leave as an exercise. 
\begin{prop}\label{prop:basic}\phantom{123}
\begin{enumerate}
    \item \label{prop:basic-injective}The map $\g\mapsto (z_1(\g),z_2(\g)) $ is injective. 
    \item \label{prop:basic-length}If $\nu_\H(\g)=n$, then $N(z_1(\g))=n+2$ and $N(z_2(\g))=n-2$.
    \item \label{prop:basic-quotients}If $\g'=\g_i^{j_1}\g\g_i^{j_2}$, then 
    \begin{align}
            z_1(\g')&=i^{j_1+j_2}z_1(\g),\\
            z_2(\g')&=i^{j_1-j_2}z_2(\g).
    \end{align}
   
    \item \label{prop:basic-disc-model}We have
    \begin{align}
        f(\g i)&=\frac{z_2(\g)}{\overline{z_1(\g)}}=z_2(\g)/\overline{z_1(\g)}, \\f(\g^{-1} i)&=-\frac{z_2(\g)}{z_1(\g)}=-z_2(\g)/z_1(\g).
    \end{align}
    \item \label{prop:basic-angles}We have
     \begin{align}
        \exp{(2 i\theta_1(\g))}&=\frac{z_1(\g)}{\abs{z_1(\g)}}\frac{z_2(\g)}{\abs{z_2(\g)}},\\ 
        \exp{(2 i\theta_2(\g))}&=\frac{z_1(\g)}{\abs{z_1(\g)}}\frac{\abs{z_2(\g)}}{z_2(\g)}.
    \end{align}
\end{enumerate}
\end{prop}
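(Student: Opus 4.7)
The plan is to check the five items by direct computation; no machinery beyond the definitions and the determinant relation $ad-bc=1$ is needed, although item (\ref{prop:basic-angles}) requires careful bookkeeping of the angle conventions set up earlier in the section.

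For item (\ref{prop:basic-injective}), the linear map $(a,b,c,d)\mapsto(a+d,b-c,a-d,b+c)$ on $\R^4$ is invertible, so the real and imaginary parts of $(z_1(\gamma),z_2(\gamma))$ determine $(a,b,c,d)$ uniquely. For item (\ref{prop:basic-length}), expand
\begin{equation*}
N(z_1(\gamma))=(a+d)^2+(b-c)^2=(a^2+b^2+c^2+d^2)+2(ad-bc),
\end{equation*}
and use $\det\gamma=1$ to obtain $\nu_\H(\gamma)+2$; the analogous expansion for $z_2$ gives $\nu_\H(\gamma)-2$.

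For item (\ref{prop:basic-quotients}), I would compute $\gamma_i\gamma$ and $\gamma\gamma_i$ directly. Left multiplication by $\gamma_i$ sends the entries $(a,b,c,d)$ to $(c,d,-a,-b)$, and right multiplication sends them to $(-b,a,-d,c)$. Substituting into the definitions of $z_1,z_2$ (and using $i(u+iv)=-v+iu$) yields $z_1(\gamma_i\gamma)=z_1(\gamma\gamma_i)=iz_1(\gamma)$, $z_2(\gamma_i\gamma)=iz_2(\gamma)$, and $z_2(\gamma\gamma_i)=i^{-1}z_2(\gamma)$. Iterating these produces the claimed exponents $j_1+j_2$ for $z_1$ and $j_1-j_2$ for $z_2$.

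For item (\ref{prop:basic-disc-model}), compute $\gamma i=(ai+b)/(ci+d)$ and clear denominators to obtain
\begin{equation*}
\gamma i-i=\frac{(b+c)+i(a-d)}{ci+d},\qquad \gamma i+i=\frac{(b-c)+i(a+d)}{ci+d}.
\end{equation*}
The numerators are exactly $iz_2(\gamma)$ and $i\overline{z_1(\gamma)}$, so $f(\gamma i)=z_2(\gamma)/\overline{z_1(\gamma)}$. For the inverse, the same computation applied to $\gamma^{-1}=\left(\begin{smallmatrix}d&-b\\-c&a\end{smallmatrix}\right)$, together with $z_1(\gamma^{-1})=\overline{z_1(\gamma)}$ and $z_2(\gamma^{-1})=-z_2(\gamma)$, yields $f(\gamma^{-1}i)=-z_2(\gamma)/z_1(\gamma)$.

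For item (\ref{prop:basic-angles}), combine (\ref{prop:basic-disc-model}) with the polar identity $f(\gamma i)=\tfrac{e^{-r}-1}{e^{-r}+1}e^{2i\theta_1(\gamma)}$. Using $1/\overline{z_1}=z_1/|z_1|^2$ rewrites $z_2/\overline{z_1}$ as $z_1z_2/|z_1|^2$, and normalizing by $|f(\gamma i)|=|z_2|/|z_1|$ extracts the unit $z_1z_2/(|z_1||z_2|)$, which agrees with $e^{2i\theta_1(\gamma)}$ under the $\bmod\pi$ ambiguity of $\theta_1$ that absorbs the negative prefactor. For $\theta_2$, apply the identity $2\theta_2(\gamma)=\pi-2\theta_1(\gamma^{-1})$ noted in the excerpt: substituting the $\theta_1$ formula at $\gamma^{-1}$ together with $z_1(\gamma^{-1})=\overline{z_1(\gamma)}$, $z_2(\gamma^{-1})=-z_2(\gamma)$, and the identity $\overline{z}/|z|=|z|/z$ for nonzero $z\in\C$ produces $e^{2i\theta_2(\gamma)}=(z_1(\gamma)/|z_1(\gamma)|)(|z_2(\gamma)|/z_2(\gamma))$. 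The one real subtlety in the proof is this sign bookkeeping in (\ref{prop:basic-angles}); everything else is $2\times 2$ matrix arithmetic and the determinant relation.
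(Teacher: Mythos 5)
The paper gives no argument here---the proof is explicitly ``left as an exercise''---so your entry-by-entry verification is exactly the intended route. Items (1)--(4) check out: the linear change of variables is invertible, the norm expansions reduce to $ad-bc=1$, the transformation rules under left and right multiplication by $\g_i$ are computed correctly, and the numerators of $\g i\mp i$ are indeed $iz_2(\g)$ and $i\overline{z_1(\g)}$, which together with $z_1(\g^{-1})=\overline{z_1(\g)}$, $z_2(\g^{-1})=-z_2(\g)$ gives item (4).

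The one genuine gap is your treatment of the sign in item (5). The prefactor $\frac{e^{-r}-1}{e^{-r}+1}$ is negative for $r>0$, and you claim the resulting $-1$ is absorbed by the mod $\pi$ ambiguity of $\theta_1$. It is not: $e^{2i(\theta_1+\pi)}=e^{2i\theta_1}$, so $e^{2i\theta_1(\g)}$ is a completely well-defined unit complex number and cannot absorb a sign. Carried out honestly, your computation gives $e^{2i\theta_1(\g)}=f(\g i)/\abs{f(\g i)}\cdot\sgn\left(\frac{e^{-r}-1}{e^{-r}+1}\right)=-\frac{z_1(\g)z_2(\g)}{\abs{z_1(\g)}\abs{z_2(\g)}}$, and your own relation $2\theta_2(\g)=\pi-2\theta_1(\g^{-1})$ then forces $e^{2i\theta_2(\g)}=-\frac{z_1(\g)}{\abs{z_1(\g)}}\frac{\abs{z_2(\g)}}{z_2(\g)}$: both displayed identities acquire an extra minus sign relative to the statement. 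A concrete test: $\g=\left(\begin{smallmatrix}1&1\\0&1\end{smallmatrix}\right)$ has $z_1=2+i$, $z_2=-i$, $f(\g i)=\frac{1-2i}{5}$, hence $e^{2i\theta_1(\g)}=-\frac{1-2i}{\sqrt 5}$ while $\frac{z_1z_2}{\abs{z_1}\abs{z_2}}=+\frac{1-2i}{\sqrt 5}$. So either the proposition is to be read only up to sign or the right-hand sides need an extra $-1$; in either case your argument as written does not establish the displayed equalities. The discrepancy is harmless downstream, since Lemma \ref{lem:factorisation} only uses these quantities raised to even powers $m_1,m_2$, but you should state the sign explicitly rather than appeal to an ambiguity that does not exist.
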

We note that Proposition \ref{prop:basic} \eqref{prop:basic-angles} can be formulated as follows: the angles $2\theta_1(\g), 2\theta_2(\g)$ can be identified as the sum and difference of the arguments of $z_1(\g), z_1(\g)$, i.e.
\begin{align}
    2\theta_1(\g)&=\arg(z_1(\g))+\arg(z_2(\g)) \bmod 2\pi,\\
    2\theta_2(\g)&=\arg(z_1(\g))-\arg(z_2(\g)) \bmod 2\pi.
\end{align}

Proposition \ref{prop:basic} \eqref{prop:basic-injective}, \eqref{prop:basic-length} raises the question of determining the precise image of the map $\g\mapsto (z_1(\g),z_2(\g)) $ when restricted to the finite set consisting of $\g\in\G$ with $\nu_\H(\g)=n$. Since an invertible $2\times 2$ matrix must have at least two non-zero entries consider $n\geq 2$. We consider this question separately for the four different values of $n\bmod 4$:

Let $S_2=\{n\in \Z \vert\, r(n)>0\}$ be the set of integers expressible as a sum of two squares. Recall that  $S_2$ consists precisely of integers $n$ satisfying that any prime $p=3\bmod 4$ occurs an even number of times in the factorisation of $n$ into rational primes, so an odd number cannot be in $S_2$ unless it is $1\bmod 4$.
\begin{lem}\label{translation: Trivial case} If $n=0\bmod 4$ or $n=1\bmod 4$ then
\begin{equation}
    \{\g\in \G\vert\, \nu_\H(\g)=n\}=\emptyset.
\end{equation}
\end{lem}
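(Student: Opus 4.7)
The plan is an elementary parity analysis that combines the determinant condition $ad-bc=1$ with the size condition $a^2+b^2+c^2+d^2=n$. Working modulo $4$, each integer square is $\equiv 0$ or $1$, so $n\bmod 4$ simply equals the number $k\in\{0,1,2,3,4\}$ of odd entries among $a,b,c,d$. The argument then splits into the two cases separately.

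For $n\equiv 0\pmod 4$ we must have $k\in\{0,4\}$: either all four entries are even, in which case $ad-bc$ is manifestly even, or all four are odd, in which case $ad$ and $bc$ are both odd and their difference is again even. Neither case is compatible with $\det\gamma=1$. For $n\equiv 1\pmod 4$ we need $k=1$, so exactly one of $a,b,c,d$ is odd and the other three are even; then each of the products $ad$ and $bc$ contains at least one even factor, so $ad-bc$ is even, contradicting $\det\gamma=1$ once more. In both cases the set $\{\gamma\in\Gamma\mid \nu_\H(\gamma)=n\}$ is therefore empty.

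There is no real obstacle here: the two parity constraints are directly incompatible, and no deeper input (spectral theory, Gaussian integers, etc.) is needed. As a consistency check one could invoke Proposition \ref{prop:basic}\eqref{prop:basic-length}, which forces both $n-2$ and $n+2$ to lie in $S_2$; this instantly kills $n\equiv 1\pmod 4$ since then $n-2\equiv 3\pmod 4$ cannot be a sum of two squares, but for $n\equiv 0\pmod 4$ one still ends up needing the mod $4$ argument on $a,b,c,d$ above, so the direct computation is the cleanest route to cover both residue classes uniformly.
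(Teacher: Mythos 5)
Your proof is correct, and it takes a genuinely different, more elementary route than the paper's. The paper deduces the lemma from Proposition \ref{prop:basic} \eqref{prop:basic-length}: if $\nu_\H(\g)=n$ then both $n+2$ and $n-2$ must lie in $S_2$, and it rules out the two residue classes via the classical characterization of sums of two squares (for $n\equiv 1\pmod 4$ immediately from $n+2\equiv 3\pmod 4$; for $n\equiv 0\pmod 4$ by writing $n\pm 2=2(2m\pm 1)$, noting that one of $2m\pm 1$ is $\equiv 3\pmod 4$ and hence contains a prime $p\equiv 3\pmod 4$ to an odd power, an obstruction that survives multiplication by $2$). You instead work directly with the two constraints $ad-bc=1$ and $a^2+b^2+c^2+d^2=n$: reducing the latter mod $4$ shows the number $k$ of odd entries satisfies $k\equiv n\pmod 4$, and in each resulting case ($k\in\{0,4\}$ for $n\equiv 0$, $k=1$ for $n\equiv 1$) the quantity $ad-bc$ is forced to be even, contradicting the determinant condition. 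Your version is shorter and fully self-contained, needing neither the Gaussian-integer parametrization nor the $S_2$ characterization; the paper's version has the advantage of exhibiting the obstruction in the language of the map $\g\mapsto(z_1(\g),z_2(\g))$ on which the rest of the section is built. One small inaccuracy in your closing aside: the $S_2$ route does dispose of $n\equiv 0\pmod 4$ without falling back on entry parities --- that is precisely what the odd-multiplicity argument for primes $p\equiv 3\pmod 4$ accomplishes --- but this does not affect the validity of your own argument.
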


\begin{proof}

Case $n=0\bmod 4$: Proposition \ref{prop:basic} \eqref{prop:basic-length} implies that if $\{\g\in \G\vert\, \nu_\H(\g)=n\}\neq \emptyset$ then $n\pm 2\in S_2$. We have $n=4m$, so $n\pm 2=2(2m\pm 1)$. Clearly one of  $2m\pm 1$ is equal to $3\bmod 4$, so one of them is not in $S_2$. This implies that the prime factorisation of that number contains a prime $p=3\bmod 4$ occurring an odd number of times. But then the same is true for the corresponding $n\pm 2=2(2m\pm 1)$, so one of $n\pm 2$ is not in $S_2$. This implies that in this case  
\begin{equation}
    \{\g\in \G\vert\, \nu_\H(\g)=n\}=\emptyset.
\end{equation}

Case $n=1\bmod 4$:
Again Proposition \ref{prop:basic} \eqref{prop:basic-length} implies that, if $\{\g\in \G\vert\, \nu_\H(\g)=n\}\neq \emptyset$, then $n\pm 2\in S_2$. When $n=1 \bmod 4$ we have that $n+2\notin S_2$, so also in this case 
\begin{equation}
    \{\g\in \G\vert\, \nu_\H(\g)=n\}=\emptyset.
\end{equation}
\end{proof}
Let 
\begin{equation}B_n=\{z_1\in \Z[i]\vert\, N(z_1)=n+2\}\times \{z_2\in \Z[i]\vert\, N(z_2)=n-2\}.\end{equation}
\begin{lem}\label{lem:translation: first-case}
Let $n\geq 2$ with $n=2\bmod 4$. Then the map 
\begin{equation}
    \begin{array}{ccc}
    \{\g\in \G\vert\, \nu_\H(\g)=n\}&\to & B_n\\
    \g&\mapsto & (z_1(\g), z_2(\g))
    \end{array}
\end{equation}
is an isomorphism. 
\end{lem}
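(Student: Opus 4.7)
The plan is to leverage Proposition \ref{prop:basic} together with an explicit inversion of the linear map $\gamma \mapsto (z_1(\gamma), z_2(\gamma))$. Injectivity and the fact that the image lies in $B_n$ are already given by parts \eqref{prop:basic-injective} and \eqref{prop:basic-length} of that proposition, so the content of the lemma is surjectivity.

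For surjectivity, I would start with an arbitrary pair $(z_1, z_2) \in B_n$, write $z_1 = u_1 + i v_1$ and $z_2 = u_2 + i v_2$, and solve the linear system
\begin{equation}
u_1 = a+d, \quad v_1 = b-c, \quad u_2 = a-d, \quad v_2 = -(b+c)
\end{equation}
to obtain candidate entries $a = (u_1+u_2)/2$, $d = (u_1-u_2)/2$, $b = (v_1-v_2)/2$, $c = -(v_1+v_2)/2$. The first step to verify is that $a,b,c,d \in \Z$. This is where the hypothesis $n \equiv 2 \bmod 4$ enters crucially: it forces $n \pm 2 \equiv 0 \bmod 4$, so $u_i^2 + v_i^2 \equiv 0 \bmod 4$ for $i=1,2$. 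Since squares modulo $4$ are $0$ or $1$, both $u_i$ and $v_i$ must be even, guaranteeing that the candidate entries are integers.

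Next I would check that the resulting matrix $\gamma = \begin{pmatrix}a&b\\c&d\end{pmatrix}$ actually lies in $\G = \slz$, i.e. that $\det\gamma = 1$. A short computation gives
\begin{equation}
ad - bc = \tfrac{1}{4}\bigl(u_1^2 - u_2^2 + v_1^2 - v_2^2\bigr) = \tfrac{N(z_1) - N(z_2)}{4} = \tfrac{(n+2)-(n-2)}{4} = 1,
\end{equation}
and a similar computation gives
\begin{equation}
\nu_\H(\gamma) = a^2 + b^2 + c^2 + d^2 = \tfrac{N(z_1) + N(z_2)}{2} = n.
\end{equation}
Finally one checks that $z_1(\gamma) = z_1$ and $z_2(\gamma) = z_2$ by plugging into the definition \eqref{eq:basic-map}, which is immediate from the solution to the linear system. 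This shows the map is surjective and concludes the proof.

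The proof is essentially a routine verification; the only real content is the parity observation. The main (and only) obstacle is the need for $n \equiv 2 \bmod 4$: without this congruence the entries would fail to be integers, which is precisely why the other residue classes modulo $4$ are handled separately (the odd case needing a further refinement by a factor of $2$, as will be treated in the analogous lemma for $n \equiv 3 \bmod 4$).
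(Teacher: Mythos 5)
Your proposal is correct and follows essentially the same route as the paper: injectivity and the target from Proposition \ref{prop:basic}, then surjectivity by the explicit inverse formulas for $a,b,c,d$, with the key parity observation that $n\equiv 2 \bmod 4$ forces $N(z_i)\equiv 0\bmod 4$ and hence all real and imaginary parts even. The determinant and trace-form computations match the paper's verification exactly.
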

\begin{proof}

We can define an inverse map from $B_n$. Proposition \ref{prop:basic}  gives that the assignment $\g\mapsto (z_1(\g), z_2(\g))$ maps $\{\g\in \G\vert\, \nu_\H(\g)=n\}$ into $B_n$, so we have an injective map into $B_n$. 

To see that it is surjective we note that if $(z_1,z_2)\in B_n$ with $z_j=x_j+iy_j$, then \begin{equation}N(z_1)=n+2= 0\bmod 4, \quad N(z_2)=n-2= 0\bmod 4.\end{equation} 
But this is only possible if all of $x_1,x_2, y_1,y_2$ are even. To see this write \begin{align}x_1=\delta_x+2m_x\\ y_1=\delta_y+2m_y\end{align} with $m_x, m_y$ integers and $\delta_x, \delta_y\in \{0,1\}$. Then 
\begin{equation}
x_1^2+y_1^2=\delta_x^2+4\delta_xm_x+4m_x^2+\delta_y^2+4\delta_ym_y+4m_y^2=\delta_x^2+\delta_y^2\bmod 4,
\end{equation} which  implies $\delta_x=\delta_y=0$,  since $N(z_1)=0 \bmod 4$.

We now define a map $B_n\to \{\g\in \G\vert\, \nu_\H(\g)=n\}$ as follows: Let
\begin{equation}\label{eq:inverse-map}
a=\frac{x_1+x_2}{2},\quad b=\frac{y_1-y_2}{2},\quad c=\frac{-y_1-y_2}{2}, \quad  d=\frac{x_1-x_2}{2}.
\end{equation}
Since all the coordinates of $z_1,z_2$ are even, $a,b,c,d$ are all integers, and we easily find $ad-bc=1$ and $a^2+b^2+c^2+d^2=n$. Setting $$\gamma(z_1,z_2):=\begin{pmatrix}
    a&b\\c&d
\end{pmatrix}\in \G,$$ we verify that $z_i(\gamma(z_1,z_2))=z_i$, i.e. we have constructed an inverse to $\g\mapsto (z_1(\g),z_2(\g))$. 

Finally, we note that both sides of the map might be empty, e.g.  if $n=10$ since $12\notin S_2$. 

\end{proof}

Recall that we have defined \begin{equation}
  C_n=\left\{z_1\in \Z[i]\left\vert\, \begin{matrix}
    N(z_1)=n+2\\ 
     z_1\text{ primary}
 \end{matrix}\right.\right\}\times \left\{z_2\in \Z[i]\left\vert\, \begin{matrix}
    N(z_2)=n-2\\
     \quad  z_2\textrm{ primary}
 \end{matrix}\right.\right\}.
\end{equation}

\begin{lem}\label{lem:translation: second-case}
Let $n\geq 2$ with $n=3\bmod 4$. Then there exist unique representatives for the double cosets in $\G_i\backslash\G\slash\G_i$ with $\nu_\H(\g)=n$ such that
\begin{equation}
    \begin{array}{ccc}
    \{\g\in \G_i\backslash\G\slash\G_i\vert\, \nu_\H(\g)=n\}&\to & C_n\\
    \g&\mapsto & (z_1(\g), z_2(\g))
    \end{array}
\end{equation}
is an isomorphism. 
\end{lem}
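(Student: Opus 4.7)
The plan is to exhibit, for each $\gamma\in\Gamma$ with $\nu_\H(\gamma)=n$ and $n\equiv 3\bmod 4$, a unique element of the double coset $\Gamma_i\gamma\Gamma_i$ whose image under $\gamma\mapsto(z_1(\gamma),z_2(\gamma))$ lies in $C_n$, and then to verify that every element of $C_n$ arises in this way from an actual element of $\Gamma$.

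Start from Proposition \ref{prop:basic} \eqref{prop:basic-length}: $N(z_j(\gamma))=n\pm 2\equiv 1\bmod 4$, so each $z_j(\gamma)$ is coprime to $1+i$ and hence has a unique primary associate among its four $\Z[i]^\times$-multiples. By Proposition \ref{prop:basic} \eqref{prop:basic-quotients}, the action of $(\gamma_i^{j_1},\gamma_i^{j_2})$ multiplies $z_1$ by $i^{j_1+j_2}$ and $z_2$ by $i^{j_1-j_2}$. The group homomorphism $(j_1,j_2)\mapsto(j_1+j_2,j_1-j_2)$ on $(\Z/4)^2$ has image $\{(a,b):a\equiv b\bmod 2\}$ and kernel $\{(0,0),(2,2)\}$. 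Since $\gamma_i^2=-I$ is central, the kernel acts trivially on $\gamma$. Consequently a unique representative of the double coset with $(z_1,z_2)\in C_n$ exists precisely when the unit corrections $(a_1,a_2)\in(\Z/4)^2$ that make $z_1(\gamma),z_2(\gamma)$ primary satisfy $a_1\equiv a_2\bmod 2$.

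The central step is to establish this parity matching. The condition $n\equiv 3\bmod 4$ (together with $\det\gamma=1$) forces exactly one of $a,b,c,d$ to be even. Running through the four possibilities, one checks that if the even entry lies on the diagonal (that is, $a$ or $d$), then both $z_1(\gamma)=(a+d)+(b-c)i$ and $z_2(\gamma)=(a-d)-(b+c)i$ have odd real part and even imaginary part; whereas if the even entry is off-diagonal, then both have even real part and odd imaginary part. A Gaussian integer $x+yi$ with norm $\equiv 1\bmod 4$ is primary iff $x+y\equiv x-y\equiv 1\bmod 4$, which forces $x$ odd and $y$ even. Hence in the diagonal-even case one has $a_1,a_2\in\{0,2\}$, and in the off-diagonal-even case $a_1,a_2\in\{1,3\}$; either way $a_1\equiv a_2\bmod 2$.

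For surjectivity I would reuse the inverse formula \eqref{eq:inverse-map} of Lemma \ref{lem:translation: first-case}: given $(z_1,z_2)\in C_n$ with $z_j=x_j+iy_j$, the characterisation of primary just noted says each $x_j$ is odd and each $y_j$ even, so $a=(x_1+x_2)/2$, $b=(y_1-y_2)/2$, $c=-(y_1+y_2)/2$, $d=(x_1-x_2)/2$ are integers. The identities $N(z_1)+N(z_2)=2(a^2+b^2+c^2+d^2)$ and $N(z_1)-N(z_2)=4(ad-bc)$ then give $\nu_\H(\gamma)=n$ and $\gamma\in\slz$ at once, and $z_j(\gamma)=z_j$ by construction. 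The main obstacle in the whole argument is the parity case analysis of the third paragraph; once that is in hand everything else reduces to bookkeeping about the $\Gamma_i\times\Gamma_i$-action and the arithmetic of primary Gaussian integers.
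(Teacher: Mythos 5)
Your proof is correct, and its overall architecture matches the paper's: in each double coset you locate the unique representative whose image lands in $C_n$, and you invert the map via \eqref{eq:inverse-map} using the fact that a primary Gaussian integer has odd real part and even imaginary part. Where you genuinely diverge is in the verification of the central compatibility. You make the two-sided unit action explicit as the homomorphism $(j_1,j_2)\mapsto(j_1+j_2,j_1-j_2)$ on $(\Z/4\Z)^2$, with image $\{(s,t):s\equiv t\bmod 2\}$ and kernel $\{(0,0),(2,2)\}$ killed by $\g_i^2=-I$, thereby reducing existence and uniqueness of the good representative to the parity condition $a_1\equiv a_2\bmod 2$ on the primary-normalizing exponents; you then check that condition by observing that $n\equiv 3\bmod 4$ forces exactly one even entry among $a,b,c,d$ and running the four cases. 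The paper instead normalizes only $z_2(\g)$ to be primary by a one-sided multiplication $\g\mapsto\g_i^k\g$, deduces $z_1(\g)\equiv\pm1\bmod(1+i)^3$ from the single congruence $z_1=z_2+2(a+ib)$ together with $2w\equiv 0$ or $2\bmod(1+i)^3$, removes the residual sign by $\g\mapsto\g_i\g\g_i$, and gets uniqueness again from $\g_i^2=-I$. Your route is more systematic, since it identifies exactly which pairs of unit corrections the double-coset action can realize and would work even without the special divisibility of $z_1-z_2$; the paper's argument is shorter because that divisibility makes the four-way parity analysis unnecessary. Both arguments are complete, and your surjectivity step (recovering $ad-bc=1$ and $a^2+b^2+c^2+d^2=n$ from $N(z_1)\pm N(z_2)$) is the same as the paper's.
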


\begin{proof}

If a Gaussian integer is primary then its imaginary part is even and its real part is odd \cite[p 121]{IrelandRosen:1990a}. With these parity conditions the numbers $a,b,c,d$ defined by \eqref{eq:inverse-map}
are again all integers and we have $ad-bc=1$ and $a^2+b^2+c^2+d^2=n$. Setting $\gamma(z_1,z_2):=\begin{pmatrix}
    a&b\\c&d
\end{pmatrix}\in \G$ we again easily verify that $z_i(\gamma(z_1,z_2))=z_i$. However, the map from $\{\g\in \G\vert\, \nu_\H(\g)=n\}$ does not always land in $C_n$ so we need to restrict our mapping a bit.

We claim that any double cosets in $\G_i\backslash\G\slash\G_i$ has a unique representative $\g$ such that $(z_1(\g), z_2(\g))\in C_n$. To see this we first take any representative $\g''$ for a given double coset. Then $N(z_2(\g''))=1\bmod 4$ so $z_2(\g'')$ is not divisible by $(1+i)$. 
It follows that there exists a unique $k\bmod 4$ such that $i^kz_2(\g'')$ is primary. Let $\g'=\g_i^{k}\g''$.  Then by Proposition \ref{prop:basic} \eqref{prop:basic-quotients} we have that   $z_2(\g')=i^kz_2(\g')$, so \begin{equation}z_2(\g')\in \left\{z_2\in \Z[i]\left\vert\, \begin{matrix}
    N(z_2)=n-2\\
     z_2\text{ primary}
 \end{matrix}\right.\right\}.\end{equation} We now claim that $z_1(\g')=\pm 1 \bmod (1+i)^3$.

To see this we first note that for any $w\in \Z[i]$ is congruent to 0 or 1 modulo $(1+i)$. This follows from observing that Euclidean division in $\Z[i]$ gives that $w=q(1+i)+r$ for some $q\in \Z[i]$ where $r$ is either zero or a unit. Noting that all units are equivalent modulo $(1+i)$ shows that $w=0,1\bmod (1+i)$.

To prove that $z_1(\g')=\pm 1 \bmod (1+i)^3$ we note that, by construction, we have that $z_2(\g')=1\bmod (1+i)^3$, and from the general definition \eqref{eq:basic-map} we see that 
$z_1(\g')=z_2(\g')+2w$, where $w=a+ib$. Since $2w$ equals $0$ or $2$ $\bmod (1+i)^3$, it follows that $z_1(\g)=\pm 1\bmod (1+i)^3$.

If $z_1(\g')=1\bmod (1+i)^3$, then the only other representative in the same double coset  that map to the same point is  $\g=\g_i^2\g'\g_i^2$, see  again Proposition \ref{prop:basic} \eqref{prop:basic-quotients}. Since $\g_i^2=-I$ this is really the same representative, i.e. $\g=\g_i^2\g'\g_i^2=\g'$.  

If $z_1(\g')=-1\bmod (1+i)^3$, then the only representatives in the same double coset  that maps to $C_n$ is  $\g=\g_i^1\g'\g_i^1$ or $\g=\g_i^3\g'\g_i^3$. But since $\g_i^2=-I$ this is again really the same representative. This finishes the proof.

Note that also in this case we may have $\{\g\in \G\vert\, \nu_\H(\g)=n\}=\emptyset$, e.g. when $n=19$, since $21\notin S_2$. 

\end{proof}

We can now use the bijections of the above lemmata to connect the hyperbolic Kloosterman sum with products of Weyl sums related to Gaussian integers with norms with distance $4$ apart:
\begin{lem} \label{lem:factorisation} If $m_1$ or $m_2$ is odd, then $S_e(m_1,m_2,n)=0$. If not, then
\begin{equation}
 S_e(m_1,m_2,n)=\begin{cases}
  16W_{m_1-m_2}(n+2)W_{m_1+m_2}(n-2),  &\textrm{if }n=2\bmod 4,\\
8W^P_{m_1-m_2}(n+2)W^P_{m_1+m_2}(n-2),&\textrm{if }n=3\bmod 4 ,\\
    0,&\textrm{otherwise.}
    \end{cases}
\end{equation}
\end{lem}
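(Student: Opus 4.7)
I split the argument by the parity of $m_1, m_2$ and the residue of $n$ modulo $4$. The two trivial vanishings are immediate: if $m_1$ or $m_2$ is odd, then \eqref{eq:reduce to quotient} gives $S_e(m_1,m_2,n)=0$; and if $n \equiv 0$ or $1 \bmod 4$, Lemma \ref{translation: Trivial case} shows the indexing set is empty, so again $S_e = 0$. It remains to treat $n \equiv 2$ and $n \equiv 3 \bmod 4$ with $m_1, m_2$ both even.

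For $n \equiv 2 \bmod 4$, I apply the bijection of Lemma \ref{lem:translation: first-case} between $\{\g \in \G : \nu_\H(\g)=n\}$ and the product set $B_n$. By Proposition \ref{prop:basic} \eqref{prop:basic-angles}, the linear combination of angles in the exponent transforms as
\[
2\theta_1(\g) m_1 + 2\theta_2(\g) m_2 \equiv (m_1+m_2)\arg z_1(\g) + (m_1-m_2)\arg z_2(\g) \pmod{2\pi},
\]
so each summand factors as a product of a power of $z_1/|z_1|$ and a power of $z_2/|z_2|$. Because $B_n$ is a Cartesian product, the resulting double sum splits into two independent Weyl sums, one over Gaussian integers of norm $n+2$ and one over norm $n-2$. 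Each such sum equals $4 W_\bullet(\cdot)$ by the definition of $W_m$, and the two factors of $4$ combine to produce the $16$.

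For $n \equiv 3 \bmod 4$, I first use \eqref{eq:reduce to quotient} to reduce $S_e$ to $8 S'_e$, where $S'_e$ is the analogous sum over a set of double-coset representatives for $\G_i \backslash \G / \G_i$. Lemma \ref{lem:translation: second-case} then gives a bijection between these representatives (with $\nu_\H(\g)=n$) and the product $C_n$ of primary Gaussian integers of norms $n+2$ and $n-2$. The same angle identity again factorises the exponential, but now the inner sums run over primary representatives, each of which is unique in its $\Z[i]^\times$-orbit. Hence no factor of $4$ per sum appears; the product directly yields $W^P_\bullet(n+2)\, W^P_\bullet(n-2)$, and the overall prefactor of $8$ comes entirely from the double-coset reduction.

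The main bookkeeping to watch is the interplay of the three numerical constants ($4$ from quotienting by units $\Z[i]^\times$, $8$ from the size of a $\G_i$-double coset, and the sign ambiguity by $\pm 1 \bmod (1+i)^3$ handled inside Lemma \ref{lem:translation: second-case}); once Proposition \ref{prop:basic} \eqref{prop:basic-angles} is in hand, the remaining calculation is essentially mechanical.
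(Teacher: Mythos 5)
Your proof is correct and follows essentially the same route as the paper's, which simply cites \eqref{eq:reduce to quotient}, Lemma \ref{translation: Trivial case}, Lemma \ref{lem:translation: first-case}, Lemma \ref{lem:translation: second-case} and Proposition \ref{prop:basic} \eqref{prop:basic-angles} in exactly the order you spell out. One bookkeeping point that your placeholders $W_\bullet$ gloss over: your (correct) identity $2\theta_1(\g)m_1+2\theta_2(\g)m_2=(m_1+m_2)\arg z_1(\g)+(m_1-m_2)\arg z_2(\g)$ together with $N(z_1(\g))=n+2$ attaches the index $m_1+m_2$ to the norm-$(n+2)$ factor, so the computation actually produces $16\,W_{m_1+m_2}(n+2)W_{m_1-m_2}(n-2)$, resp.\ $8\,W^P_{m_1+m_2}(n+2)W^P_{m_1-m_2}(n-2)$, i.e.\ the indices in the lemma's display are transposed relative to what the argument gives; this is immaterial for the application in Theorem \ref{crucialbounds}, which only uses absolute values and the fact that at least one of $m_1\pm m_2$ is a non-zero even integer.
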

\begin{proof}
For $n=0,1 \bmod 4$ this follows from Lemma \ref{translation: Trivial case}. For $n=2 \bmod 4$ it follows from Lemma \ref{lem:translation: first-case} and Proposition \ref{prop:basic} \eqref{prop:basic-angles}.
For $n=3\bmod 4$ it follows from \eqref{eq:reduce to quotient}, Lemma \ref{lem:translation: second-case}, and Proposition \ref{prop:basic} \eqref{prop:basic-angles}.
\end{proof}
\subsection{Counting group elements along primes in the elliptic case}
Friedlander and Iwaniec considered
\begin{equation}
    \pi_\G(x)=\#\{\g\in  \G\vert\,\, \nu_\H(\g)=p\leq x\}.
\end{equation} We note that $\pi_\G(x)=\pi_E(x)$.

 In order to give good estimates on $\pi_E(x)$ they introduced for $0<\theta\leq 1$ an assumption $A(\theta)$ as follows. 
Let $\Lambda(n)$ denote   
the von Mangoldt function and \begin{equation}
\psi(x,a,q)=\sum_{\substack{n\leq x\\n= a\bmod q}}\Lambda(n),
\end{equation}
which by the prime number theorem for primes in arithmetic progression is asymptotic to $x/\varphi(q)$. Consider the level $Q$ remainder
\begin{equation}
    E(x,Q)=\sum_{q\leq Q}\max_{(a,q)=1}\max_{y\leq x}\abs{\psi(x,a,q)-\frac{x}{\varphi(q)}}.
\end{equation} Then $A(\theta)$ is the assumption that for any $A, \e>0$ 
\begin{equation}
    E(x,Q)=O_{\e, A}(x(\log x)^{-A}), \textrm{ when }Q=x^{\theta-\e}.
\end{equation}
Note that $A(1/2)$ is the Bombieri--Vinogradov theorem, and $A(1)$ is the Elliott--Halberstam conjecture. 

Friedlander and Iwaniec proved that there exists a $\theta_0<1$ such that, if   $A(\theta_0)$ is true, then 
\begin{equation} \label{hyperbolic-counting-prime-precise}
\pi_E(x)\asymp \frac{x}{\log x}.
\end{equation}
They furthermore conjectured that $\pi_E(x)\sim c\frac{x}{\log x}$ for some constant $c>0$. 

\subsection{Equidistribution over primes in the elliptic case}

To prove equidistribution over primes in this case we use the following result: 
\begin{thm}\label{crucialbounds} If $m=(m_1,m_2)\in \Z^2\backslash\{0\}$ and $l$ is a non-negative even integer, then 
\begin{equation}
\sum_{\substack{\g\in \G\\\nu_\H(\g)+l=p\leq x}}\exp({i(2\theta_1(\g)m_1+2\theta_2(\g)m_2)})=O_{m}\left(\frac{x}{\log x}\frac{1}{\log^{1-\frac{2}{\pi }}x}\right).
\end{equation}
\end{thm}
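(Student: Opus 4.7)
The plan is to bound the sum $\sum_{p\le x}S_e(m_1,m_2,p-l)$, with $S_e$ as in \eqref{def:S_e}, by reducing it via Lemma \ref{lem:factorisation} to a sum over primes of a product of multiplicative Weyl sums on Gaussian integers, and then applying the Nair--Tenenbaum bound (Theorem \ref{nair-tenenbaum}) together with Proposition \ref{Average-Euclidean Weyl sum bounded}. By Lemma \ref{lem:factorisation} the summand vanishes unless both $m_1,m_2$ are even, so assume this. Since $l$ is even, $n=p-l$ has the parity of $p$: for odd primes $n$ is odd, so the $n\equiv 2\bmod 4$ case of Lemma \ref{lem:factorisation} contributes only $O(1)$, while by Lemma \ref{translation: Trivial case} the $n\equiv 1\bmod 4$ case vanishes. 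Hence the main contribution comes from odd primes with $p-l\equiv 3\bmod 4$, where the summand equals $8\,W^P_{m_1-m_2}(p-l+2)\,W^P_{m_1+m_2}(p-l-2)$.

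Next I take absolute values and apply Theorem \ref{nair-tenenbaum} with $g_1(n)=|W^P_{m_1-m_2}(n)|$, $g_2(n)=|W^P_{m_1+m_2}(n)|$, and affine forms $(a_1,b_1)=(1,2-l)$, $(a_2,b_2)=(1,-2-l)$. The functions $g_i$ are multiplicative and dominated by $r(n)/4\le d(n)$, while the hypotheses $(a_i,b_i)=1$ and $b_1a_2-b_2a_1=4\ne 0$ are immediate. Combined with $\prod_{p\le x}(1-2/p)=O((\log x)^{-2})$, this yields an upper bound of the shape
\begin{equation*}
\frac{x}{\log^3 x}\left(\sum_{n\le x}\frac{g_1(n)}{n}\right)\left(\sum_{n\le x}\frac{g_2(n)}{n}\right).
\end{equation*}

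Since $m\ne 0$, at most one of $m_1\pm m_2$ can vanish. For a nonzero index, Proposition \ref{Average-Euclidean Weyl sum bounded} gives $\sum_{n\le x}|W^P_{m_1\pm m_2}(n)|\ll_m x/(\log x)^{1-2/\pi}$, and partial summation upgrades this to $\sum_{n\le x}g_i(n)/n\ll_m (\log x)^{2/\pi}$; for a vanishing index, $g_i(n)=r(n)/4$ and Gauss's formula yields $\sum_{n\le x}r(n)/n\ll \log x$. When both $m_1\pm m_2$ are nonzero, the resulting bound is $\frac{x}{\log x}\cdot(\log x)^{-(2-4/\pi)}$, stronger than claimed; when one of them vanishes (the degenerate case $m_1=\pm m_2$), it is $\frac{x}{\log x}\cdot(\log x)^{-(1-2/\pi)}$, exactly matching the theorem. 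The main obstacle is this degenerate case, in which one Weyl-sum factor collapses to $r(\cdot)/4$ whose mean is of full order $\log x$ rather than $(\log x)^{2/\pi}$; this is what forces the exponent $1-2/\pi$ in the statement, and improving it would require an enhancement of Proposition \ref{Average-Euclidean Weyl sum bounded} that avoids this ``free factor''.
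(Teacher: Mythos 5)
Your proposal is correct and follows essentially the same route as the paper: reduce via \eqref{eq:reduce to quotient} and Lemma \ref{lem:factorisation} to a product of primary Weyl sums over shifted primes, apply Theorem \ref{nair-tenenbaum} to $g_i=\abs{W^P_{m_1\mp m_2}}$, and invoke Proposition \ref{Average-Euclidean Weyl sum bounded} for a nonzero even index among $m_1\pm m_2$. The only (harmless) difference is that in the non-degenerate case $m_1\neq\pm m_2$ you apply the proposition to both factors and note the sharper saving $(\log x)^{-(2-4/\pi)}$, whereas the paper uniformly bounds one factor trivially by $O(\log x)$ and settles for the exponent $1-2/\pi$ in all cases.
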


\begin{proof} Let
\begin{equation}
    A(m_1, m_2, x)=\sum_{\substack{\g\in \G\\\nu_\H(\g)+l=p\leq x}}\exp({i(2\theta_1(\g)m_1+2\theta_2(\g)m_2)}).
\end{equation} Then, by \eqref{def:S_e}, we have
\begin{equation}
    A(m_1,m_2,x)=\sum_{p\leq x-l}S_e(m_1,m_2,p+l).
\end{equation}
By using \eqref{eq:reduce to quotient} we may assume that $m_1$, $m_2$ are both even. We then use Lemma \ref{lem:factorisation} to deduce that
\begin{equation}\label{additive-expression}
    \abs{A(m_1,m_2,x)}\leq 4+8 \sum_{\substack{p\leq x \\p+l=3\bmod 4}}\abs{W^P_{m_1-m_2}(p+2)}\abs{W^P_{m_1+m_2}(p-2)}.
\end{equation}
Note that, since $W^P_{m}(n)=0$ for $n=3\bmod 4$, as follows from multiplicativity and \eqref{eq: W^P(p^l):p =3 mod 4}, we may as well sum over all odd primes $p\leq x$. 

Applying Theorem \ref{nair-tenenbaum} with the non-negative multiplicative functions \begin{equation}
    g_1(n)=\abs{W^P_{m_1-m_2}(n)}, \quad g_2(n)=\abs{W^P_{m_1+m_2}(n)},
\end{equation} and $(a_1,b_1)=(1,l+2)$, $(a_2,b_2)=(1,l-2))$, we arrive at 
\begin{equation}
    \abs{A(m_1,m_2,x)}\ll_{l}\frac{x}{(\log x)^3}\sum_{n\leq x}\frac{\abs{W_{m_1-m_2}^P(n)}}{n}\sum_{n\leq x}\frac{\abs{W_{m_1+m_2}^P(n)}}{n}.
\end{equation}
Using \eqref{a trivial-bound} the two sums are trivially bounded by $O(\log(x))$,
and since $m_1$, $m_2$ are not both zero at least one of  $m_1-m_2$, $m_1+m_2$ is non-zero. Call   this non-zero integer $m'$. It  is even, since both of $m_1$, $m_2$ are even. It now follows from Proposition \ref{Average-Euclidean Weyl sum bounded} that 
\begin{equation}
    \sum_{n\leq x}\frac{\abs{W^P_{m'}(n)}}{n}=
    O_{m'}((\log x)^{2/\pi}),
\end{equation}
which gives the result. 
\end{proof}

Since $1-2/\pi>0$ we can now conclude Theorems \ref{hyperbolic-equidistribution-prime} and\ref{hyperbolic-equidistribution-prime minus 2}   by specializing to $l=0$ and $l=2$,  using Weyl's equidistribution criterion in combination with \eqref{hyperbolic-counting-prime} and \eqref{hyperbolic-counting-prime-shifted}.

It follows from Theorem \ref{crucialbounds} that, if $m_1,m_2$ are not both zero, then, conditional on the lower bound in \eqref{hyperbolic-counting-prime-precise} we have 
\begin{equation}
    \frac{1}{\pi_E(x)}\displaystyle\sum_{\substack{\nu_\H(\g)=p\leq x}}\exp({i(2\theta_1(\g)m_1+2\theta_2(\g)m_2)})=O_{m}((\log x)^{2/\pi -1})
\end{equation}
which, via Weyl's equidistribution theorem, gives Theorem \ref{hyperbolic-equidistribution-prime}. 

\begin{rem}
Note that we do not really need the full force of the lower bound in \eqref{hyperbolic-counting-prime-precise}; we only need 
\begin{equation}
    \frac{1}{\pi_E(x)}
    =o\left(x^{-1}(\log x)^{2(1-\pi^{-1})}\right)
\end{equation}
to conclude equidistribution.
\end{rem}

\section{The hyperbolic case} \label{sec:hyperbolic}
We now shift our attention to a different subgroup of $G=\sl$; we consider the quaternion group 
\begin{equation}\label{quaternion group}
\Gamma(2,5)= \left\{\begin{pmatrix}x_0+x_1\sqrt{2} & \sqrt{5}(x_2+x_3\sqrt{2})\\
\sqrt{5}(x_2-x_3\sqrt{2})&x_0-x_1\sqrt{2}
\end{pmatrix}\in G\vert x_i\in \mathbb Z\right\}
\end{equation}
This is an embedding in $G$ of the standard order
$\mathcal O=\Z[1,i,j,k]$ in \begin{equation}\legendre{2,5}{\Q}=\{q_0+q_1i+q_2j+q_3j\vert q_i\in \Q \},\end{equation} where $i^2=2$, $j^2=5$, $ij=-ji=k$.

It is well known that $\Gamma(2,5)$ is a discrete strictly hyperbolic co-compact subgroup of $G$ (See \cite[p 302-303]{Hejhal:1976a}). It has genus $3$ and co-volume $8\pi$. 
It contains the primitive hyperbolic subgroup 
\begin{equation} 
H=\langle h_0\rangle \textrm{ generate by }h_0=\begin{pmatrix}
    \varepsilon^2 &\\ &\varepsilon^{-2},
\end{pmatrix}
\end{equation} where $\varepsilon=1+\sqrt 2$ is the fundamental unit the ring of integers of the number field $\mathbb{Q}(\sqrt{2})$. 

\subsection{Hyperbolic decomposition in \texorpdfstring{$\sl$}{SL2(R)} and Good's equidistribution}
In this section we make a simplified exposition of the decomposition given in \cite[Lemma 1]{Good:1983b} leading up to Good's theorem in the case of both subgroups being hyperbolic.

Let $g=\begin{pmatrix}
    a&b\\c&d
\end{pmatrix}\in G=\sl$ and consider
\begin{align}
s&=\{g\in G\vert\ gz_1=z_2\textrm{ for some }z_1,z_2\in \{0,i \infty\} \}\\
S&=s\cup \{g\in G\vert\ g(iy_1)=iy_2\textrm{ for some }y_1,y_2\in \R_+  \}.
\end{align}
It is straightforward to check that $s=\{g\in G\vert abcd=0 \}.$

\begin{lem} \label{decomposition-step}Let $g\in G\backslash s$. Then there exist unique $y_1,y_2> 0$ such that 
\begin{equation}
g=\begin{pmatrix}y_1^{1/2}&\\ &y_1^{-1/2}\end{pmatrix}\begin{pmatrix}\alpha&\beta\\\gamma&\delta\end{pmatrix}\begin{pmatrix}y_2^{1/2}&\\ &y_2^{-1/2}\end{pmatrix}
\end{equation} with the middle matrix satisfying $\abs{\alpha}=\abs{\delta}$, $\abs{\beta}=\abs{\gamma}$.
\end{lem}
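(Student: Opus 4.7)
\medskip

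\noindent\textbf{Proof plan for Lemma \ref{decomposition-step}.}
The plan is to set up the matrix identity directly, solve for $y_1,y_2$ from the two absolute-value conditions, and verify that everything is well defined precisely because $g\notin s$.

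First I would write $g=\begin{pmatrix}a&b\\c&d\end{pmatrix}$ and expand the right-hand side to obtain the entry-wise relations
\begin{equation}
a=(y_1y_2)^{1/2}\alpha,\quad b=(y_1/y_2)^{1/2}\beta,\quad c=(y_2/y_1)^{1/2}\gamma,\quad d=(y_1y_2)^{-1/2}\delta.
\end{equation}
Any decomposition of the claimed form therefore forces
\begin{equation}
\alpha=\frac{a}{(y_1y_2)^{1/2}},\quad \beta=b\left(\frac{y_2}{y_1}\right)^{1/2},\quad \gamma=c\left(\frac{y_1}{y_2}\right)^{1/2},\quad \delta=d(y_1y_2)^{1/2}.
\end{equation}

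Next I would translate the two symmetry conditions $|\alpha|=|\delta|$ and $|\beta|=|\gamma|$ into equations for $y_1,y_2$. The first gives $|a|=|d|\,y_1y_2$, i.e.\ $y_1y_2=|a|/|d|$, and the second gives $|b|\,(y_2/y_1)=|c|$, i.e.\ $y_1/y_2=|b|/|c|$. Multiplying and dividing these two equations yields the explicit (and unique) solution
\begin{equation}
y_1=\sqrt{\frac{|ab|}{|cd|}},\qquad y_2=\sqrt{\frac{|ac|}{|bd|}}.
\end{equation}
Here is the only point where the hypothesis is used: because $g\notin s$ we have $abcd\neq 0$, so both radicands are strictly positive and $y_1,y_2$ are well-defined positive reals. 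This simultaneously establishes existence and uniqueness of $(y_1,y_2)$.

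Finally, with these values of $y_1,y_2$, I would plug back into the formulas for $\alpha,\beta,\gamma,\delta$ to verify the symmetry conditions (they hold by construction), and note that the middle matrix automatically has determinant one since conjugation by diagonal $SL_2$ matrices preserves determinants and $\det g=1$. There is no real obstacle here: the lemma is essentially the observation that a two-parameter family of diagonal $A$-translations can absorb exactly the two degrees of freedom $|a/d|$ and $|b/c|$ in the entries of $g$. The only thing to watch is that the construction breaks down precisely on the excluded set $s$, which corresponds to the degenerate cases where one of the ratios $|ab|/|cd|$ or $|ac|/|bd|$ is $0$ or $\infty$.
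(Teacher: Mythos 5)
Your proposal is correct and follows essentially the same route as the paper: conjugate $g$ by the two diagonal matrices, read off that the symmetry conditions are equivalent to $y_1y_2=\abs{a/d}$ and $y_1/y_2=\abs{b/c}$, and solve uniquely for $y_1=\abs{ab/cd}^{1/2}$, $y_2=\abs{ac/bd}^{1/2}$, using $abcd\neq 0$ for well-definedness. Nothing further is needed.
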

\begin{proof} Since $abcd\neq 0$, the matrix
\begin{equation}
\begin{pmatrix}y_1^{-1/2}&\\ &y_1^{1/2}\end{pmatrix}\begin{pmatrix}a&b \\c & d\end{pmatrix}\begin{pmatrix}y_2^{-1/2}&\\ &y_2^{1/2}\end{pmatrix}
=\begin{pmatrix}(y_1y_2)^{-1/2}a&(y_2/y_1)^{1/2} b \\ (y_1/y_2)^{1/2} c & (y_2 y_1)^{1/2}d\end{pmatrix}
\end{equation}
has the desired form if and only if $y_1y_2=\abs{a/d}$ and $y_1/y_2=\abs{b/c}$. This has a unique strictly positive solution given by
\begin{equation}
    y_1=\abs{\frac{ab}{cd}}^{1/2}, \quad \quad y_2=\abs{\frac{ac}{bd}}^{1/2}.
\end{equation}
\end{proof}
Recall the matrix
\begin{equation}
\omega=\begin{pmatrix}
0 & -1 \\ 1 & 0 \end{pmatrix}.
\end{equation}
\begin{lem}\label{decomposition-step-two}Let $g\in G\backslash s$ and assume that $\abs{a}=\abs{d}, \abs{b}=\abs{c}$. Then either $g\in K$  or there exist uniquely determined numbers $\delta_1, \delta_2\in \{0,1\}$, $v> 0$, and a sign $\pm$ such that 
\begin{equation}g=\pm \omega^{\delta_1}\begin{pmatrix}
    \cosh v&\sinh v\\ \sinh v& \cosh v
\end{pmatrix} \omega^{\delta_2} .\end{equation}
Moreover, the matrix $g\in K$ if and only if $g\in S\backslash s$, which happens if and only if $abcd<0$.
\end{lem}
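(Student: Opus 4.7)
The plan is to carry out an elementary case analysis on signs. Since $g \notin s$, all four entries $a,b,c,d$ are nonzero, so the hypotheses $\abs{a}=\abs{d}$ and $\abs{b}=\abs{c}$ force $d=\epsilon_1 a$ and $c=\epsilon_2 b$ for some $\epsilon_1,\epsilon_2\in\{\pm 1\}$. Substituting into $\det g=ad-bc=1$ yields $\epsilon_1 a^2-\epsilon_2 b^2=1$. The combination $(\epsilon_1,\epsilon_2)=(-1,1)$ is impossible, while the remaining three give: (a) $(1,-1)$ with $a^2+b^2=1$, so $g=\bigl(\begin{smallmatrix}a&b\\-b&a\end{smallmatrix}\bigr)\in K$; (b) $(1,1)$ with $a^2-b^2=1$; (c) $(-1,-1)$ with $b^2-a^2=1$.

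Next I would establish the three-way equivalence at the end of the statement. In case (a), $abcd=-a^2b^2<0$; in cases (b) and (c), $abcd=a^2b^2>0$. Hence $g\in K\iff abcd<0$. For the link with $S\setminus s$, a direct computation using $ad-bc=1$ gives
\[
g(iy_1)=\frac{(aiy_1+b)(-ciy_1+d)}{c^2y_1^2+d^2}=\frac{acy_1^2+bd+iy_1}{c^2y_1^2+d^2},
\]
which is purely imaginary with positive imaginary part exactly when $y_1^2=-bd/(ac)>0$, i.e.\ iff $abcd<0$. Combined with the classification above this shows $g\in K\iff g\in S\setminus s\iff abcd<0$.

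For the canonical form in cases (b) and (c), I would set $v=\arccosh\abs{a}>0$ in case (b) and $v=\arccosh\abs{b}>0$ in case (c); positivity comes from $b\neq 0$ and $a\neq 0$ respectively, and $v$ is unique by monotonicity of $\cosh$ on $[0,\infty)$. Writing out the eight matrices $\pm\omega^{\delta_1}h(v)\omega^{\delta_2}$, where $h(v)=\bigl(\begin{smallmatrix}\cosh v&\sinh v\\ \sinh v&\cosh v\end{smallmatrix}\bigr)$, one verifies by inspection that the four with $\delta_1=\delta_2$ realize exactly the sign patterns $(a,b)=(\pm\cosh v,\pm\sinh v)$ of case (b), while the four with $\delta_1\neq\delta_2$ realize the patterns $(a,b)=(\pm\sinh v,\pm\cosh v)$ of case (c). Since each pattern occurs for a unique triple $(\pm,\delta_1,\delta_2)$, this delivers both existence and uniqueness.

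There is no real obstacle; the only minor point to watch is that the eight elements $\pm\omega^{\delta_1}h(v)\omega^{\delta_2}$ are genuinely distinct, which could fail at $v=0$ since then $h(0)=I=-\omega h(0)\omega$. However, the exclusion of $s$ forces $\sinh v>0$ in both non-rotational cases, so the uniqueness assertion is unaffected. The main care required is bookkeeping signs consistently across the four feasible sub-cases.
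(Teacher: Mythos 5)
Your proof is correct and follows essentially the same route as the paper: a case analysis on the sign combinations forced by $\abs{a}=\abs{d}$, $\abs{b}=\abs{c}$ and the determinant condition, identification of the rotation case with $abcd<0$, and a direct computation of $g(iy_1)$ for the link with $S\setminus s$. The only cosmetic difference is that you enumerate the eight matrices $\pm\omega^{\delta_1}h(v)\omega^{\delta_2}$ and match sign patterns, whereas the paper computes $\omega^{-\delta_1}g\,\omega^{-\delta_2}$ and picks out the unique choice with all entries of one sign; these are the same argument run in opposite directions.
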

\begin{proof} By the given assumptions we are in exactly one of the following four cases:
\begin{enumerate}
    \item \label{case-1} $a=d$ and $b=-c$, such that $c^2+d^2=1$.
    \item \label{case-2} $a=d$ and $b=c$, such that $d^2-c^2=1$.
    \item \label{case-3} $a=-d$ and $b=-c$, such that $-d^2+c^2=1$.
    \item \label{case-4} $a=-d$ and $b=c$, such that $-d^2-c^2=1$.
\end{enumerate}

In case \eqref{case-1} $g\in K$. 

In case \eqref{case-2} and \eqref{case-3} we compute $\omega^{-\delta_1}g\omega^{-\delta_2}$ for all four values of $\delta_1, \delta_2$. In case \eqref{case-2} these are \begin{align}
g&=\begin{pmatrix}d & c\\c& d\end{pmatrix}, 
& 
\omega^{-1} g&=\begin{pmatrix}c & d\\-d& -c\end{pmatrix},\\
g\omega^{-1} &=\begin{pmatrix}-c & d\\-d& c\end{pmatrix}, 
& 
\omega^{-1} g\omega^{-1}&=\begin{pmatrix}-d & c\\c& -d\end{pmatrix}.\\ 
\end{align}
Precisely one choice of $\delta_1, \delta_2$  has all entries to be of the same sign, namely $(\delta_1, \delta_2)=(0,0)$, if $c$ and $d$ already has the same sign, and $(\delta_1, \delta_2)=(1,1)$ if they have opposite sign. With this choice we have 
\begin{equation}
    g=\pm \omega^{\delta_1}\begin{pmatrix}
        \abs{d}&\abs{c} \\
        \abs{c}&\abs{d} \\
    \end{pmatrix}\omega^{\delta_2}\end{equation} for a unique choice of $\pm$. Since $d^2-c^2=1$, we have that $\abs{d}>1$, and we have the claimed decomposition with $v=\log(\abs{d}+\abs{c})>0$.

Similarly in case \eqref{case-3}
\begin{align}
g&=\begin{pmatrix}-d & -c\\c& d\end{pmatrix}, &
\omega^{-1} g&=\begin{pmatrix}c & d\\d& c\end{pmatrix}\\
g\omega^{-1} &=\begin{pmatrix}c & -d\\-d& c\end{pmatrix}, &
\omega^{-1} g\omega^{-1}&=\begin{pmatrix}-d & c\\-c& d\end{pmatrix}.\\ 
\end{align}
Again, precisely one choice of $\delta_1, \delta_2$  has all entries to be of the same sign, namely $(\delta_1, \delta_2)=(1,0)$, if $c$ and $d$ already have the same sign, and $(\delta_1, \delta_2)=(0,1)$, if they have opposite signs. With this choice we have 
\begin{equation}
    g=\pm \omega^{\delta_1}\begin{pmatrix}
        \abs{c}&\abs{d} \\
        \abs{d}&\abs{c} \\
    \end{pmatrix}\omega^{\delta_2}.\end{equation} Since $c^2-d^2=1$, we have that $\abs{c}>1$ getting the claimed decomposition again         with $v=\log(\abs{d}+\abs{c})>0$.

Case \eqref{case-4} does not happen since $-c^2-d^2=1$ does not have any real solutions. 

To see the final claim note that if $g\in K$ then $gi=i$ so $g\in S\backslash s$. If, on the other hand, $g\in S\backslash s$, then there exist $y_1,y_2>0$ such that $g(iy_1)=iy_2$, which implies $aiy_1+b=-cy_1y_2+diy_2$. It follows that $ay_1-dy_2=0=-b-cy_1y_2$,
which is only possible if $a,d$ has the same sign, and $b,c$ has opposite signs. Hence we are in case \eqref{case-1} and $g\in K$. Finally we finish the proof by noting that $abcd<0$ precisely in case \eqref{case-1}.
\end{proof}

Noticing that $s$ and $S\backslash s$ are closed under multiplication from the left and the right by multiplication by $\begin{pmatrix}y^{1/2} &\\&y^{-1/2}\end{pmatrix}$, we may use the previous lemmata and the formulas in their proofs to conclude the following lemma.
\begin{lem}\label{full-hyperbolic-decomposition} For $g\in G\backslash s$ we have 
$g\in S$ if and only if $\abs{ad}+\abs{bc}=1$. If $g\notin S$ then there exist  uniquely determined numbers $y_1, y_2>0$, $\delta_1,\delta_2\in \{0,1\}$, $v>0$, and sign $\pm$ such that 
\begin{equation}g=\pm \begin{pmatrix}y_1^{1/2}&\\ &y_1^{-1/2}\end{pmatrix} \omega^{\delta_1}\begin{pmatrix}
   \cosh v&\sinh v\\ \sinh v& \cosh v
\end{pmatrix} \omega^{\delta_2} \begin{pmatrix}y_2^{1/2}&\\ &y_2^{-1/2}\end{pmatrix}.\end{equation}
Concretely 
\begin{align}
  y_1&=\abs{\frac{ab}{cd}}^{1/2},\quad y_2=\abs{\frac{ac}{bd}}^{1/2}, \\v&=\log(\abs{ad}^{1/2}+\abs{cb}^{1/2}),\\
  (\delta_1,\delta_2)&=\begin{cases}
  (0,0),& \textrm{ if } \hbox{sign}(a,b,c,d)=\pm(+,+,+,+),\\
  (1,1),& \textrm{ if } \hbox{sign}(a,b,c,d)=\pm(+,-,-,+),\\
  (1,0),& \textrm{ if } \hbox{sign}(a,b,c,d)=\pm(+,+,-,-),\\
  (0,1),& \textrm{ if } \hbox{sign}(a,b,c,d)=\pm(+,-,+,-).\\
  \end{cases}
\end{align}
\end{lem}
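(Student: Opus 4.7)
The plan is to splice Lemmas \ref{decomposition-step} and \ref{decomposition-step-two} together, and then translate the resulting conditions on the middle matrix back to conditions on the entries $a,b,c,d$ of $g$. The only substantive difficulty is the sign bookkeeping needed to read off $(\delta_1,\delta_2)$; everything else is direct.

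Starting from $g \in G \setminus s$, Lemma \ref{decomposition-step} produces unique $y_1, y_2 > 0$ and a middle matrix $M = \left(\begin{smallmatrix}\alpha & \beta \\ \gamma & \delta\end{smallmatrix}\right)$ with $|\alpha|=|\delta|$, $|\beta|=|\gamma|$, such that $g = \operatorname{diag}(y_1^{1/2}, y_1^{-1/2})\, M\, \operatorname{diag}(y_2^{1/2}, y_2^{-1/2})$. The formulas for $y_1, y_2$ follow directly from that lemma's proof, and plugging them into the explicit form of $M$ gives the identities $|\alpha|^2 = |\delta|^2 = |ad|$, $|\beta|^2 = |\gamma|^2 = |bc|$, together with $\operatorname{sign}(\alpha,\beta,\gamma,\delta) = \operatorname{sign}(a,b,c,d)$ (since $y_1, y_2 > 0$).

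Since the conjugating diagonal matrices lie in $S \setminus s$, one has $g \in S$ iff $M \in S$, and $M \notin s$. Combining $\det M = 1$ with $\alpha\delta = \pm|ad|$ and $\beta\gamma = \pm|bc|$ puts $M$ in exactly one of the four cases in the proof of Lemma \ref{decomposition-step-two}, and its $K$-case (case (1)) corresponds precisely to $|ad| + |bc| = 1$. Since Lemma \ref{decomposition-step-two} asserts $M \in S \iff M \in K$, the first claim follows.

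If $g \notin S$, then $M \notin K$, and Lemma \ref{decomposition-step-two} produces unique $\delta_1, \delta_2 \in \{0,1\}$, $v > 0$, and sign such that $M$ decomposes as stated, giving the asserted factorisation of $g$, with uniqueness inherited from both lemmas. The formula $v = \log(|\delta| + |\gamma|)$ from that proof becomes $v = \log(\sqrt{|ad|} + \sqrt{|bc|})$. Finally, $(\delta_1,\delta_2)$ is determined by transferring the sign conditions in Lemma \ref{decomposition-step-two}'s proof back to $(a,b,c,d)$ via $\operatorname{sign}(\alpha,\beta,\gamma,\delta)=\operatorname{sign}(a,b,c,d)$: the eight sign patterns with $abcd > 0$ (which are exactly the patterns with $g \notin S$, as case (1) is the unique case with $abcd < 0$) partition into the four $\pm$-pairs $\pm(+,+,+,+)$, $\pm(+,-,-,+)$, $\pm(+,+,-,-)$, $\pm(+,-,+,-)$, matched to $(0,0), (1,1), (1,0), (0,1)$ respectively. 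This is the main bookkeeping step, but introduces no new ideas beyond those already in Lemma \ref{decomposition-step-two}'s proof.
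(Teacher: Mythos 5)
Your proposal is correct and follows essentially the same route as the paper: apply Lemma \ref{decomposition-step} to extract the middle matrix $M$, then apply Lemma \ref{decomposition-step-two} to $M$, and transfer the sign conditions back to $(a,b,c,d)$ to read off $(\delta_1,\delta_2)$. One small imprecision: the matrices $\operatorname{diag}(y^{1/2},y^{-1/2})$ fix $0$ and $i\infty$ and hence lie in $s$, not in $S\setminus s$, and the correct (easy) justification for $g\in S$ iff $M\in S$ is that left and right multiplication by such matrices preserves both $s$ and $S\setminus s$ --- which is exactly the observation the paper records just before stating the lemma.
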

\begin{proof}
To see the condition for $g$ to be in $S$ we apply Lemma \ref{decomposition-step} to $g$. Let 
\begin{equation}
g'=\begin{pmatrix}y_1^{-1/2}&\\ &y_1^{1/2}\end{pmatrix}\begin{pmatrix}a&b \\c & d\end{pmatrix}\begin{pmatrix}y_2^{-1/2}&\\ &y_2^{1/2}\end{pmatrix}
=\begin{pmatrix}\frac{a\abs{d}^{1/2}}{\abs{a}^{1/2}}&\frac{b\abs{c}^{1/2}}{\abs{b}^{1/2}} \\ \frac{c\abs{b}^{1/2}}{\abs{c}^{1/2}} & \frac{d\abs{a}^{1/2}}{\abs{d}^{1/2}}\end{pmatrix}.
\end{equation} 
Then  
 $g\in S$ if and only if $g'\in S\backslash s$. 
Since $g'$ satisfies the assumptions of Lemma \ref{decomposition-step-two},  $g'\in S\backslash s$ if and only if $g'\in K$. This is equivalent to $a,d$ having the same sign and $b,c$ having opposite signs. But then the determinant condition gives $\abs{ad}+\abs{bc}=ad-bc=1$. In the opposite direction, if we assume that $\abs{ad}+\abs{bc}=1$ then combining this with the determinant condition we find that $\abs{ad}-ad=-(bd+\abs{bd})$. But since the left-hand side is non-negative and the right-hand side is non-positive both sides are zero. This implies that $a,d$ has the same sign, and $b,d$ has opposite signs. Hence $g'\in K$ and we conclude that $g\in S$. Alternatively, we may use Lemma \ref{decomposition-step-two} to conclude that $g\in S$ if and only if $ad$ and $cd$ has opposite signs, which by the determinant condition happens precisely  if $ad>0$ so that $1=ad-bc=\abs{ad}+\abs{bc}$.

To see the decomposition of $g\notin S$ we apply Lemma \ref{decomposition-step-two} to $g'$.
\end{proof}

For the decomposition of a matrix $g\in G$ we will often write the parameters $v=v(g)$, $y_i=y_i(g)$, and $\delta_i=\delta_i(g)$. 

Martin, McKee, and Wambach \cite{MartinMcKeeWambach:2011} introduced a different parameter \begin{equation}\label{MMW-parameter}
    \delta(g)=2\abs{ad+bc}. 
\end{equation} We now describe how this relates to $v(g)$ when $g\in G\backslash S.$ Since $\g\notin S$ we have $abcd>0$, so $ad$ and $bc$ has the same sign. We deduce  that 
\begin{align}
1<e^{2v(g)}&=(\abs{ad}^{1/2}+\abs{bc}^{1/2})^2\\
& =\abs{ad}+\abs{bc}+\sqrt{4abcd}\\
& =\abs{ad+bd}+\sqrt{\abs{ad+bc}^2-1}\\
& =\delta(g)/2+\sqrt{(\delta(g)/2)^2-1}=e^{\arccosh{\delta(g)/2}},
\end{align}
so that 
\begin{equation}\label{MMW-parameter-relation}
    v(g)=\frac{1}{2}\arccosh\left(\frac{\delta(g)}{2}\right).
\end{equation}
We use this to give the following geometric interpretation of the parameter $v(g)$ as the closest hyperbolic distance between the vertical geodesic from 0 to $i\infty$ and its image under $g$.

\begin{prop} Let $g\in G\backslash S$. Then  
\begin{equation}
    v(g)=\frac{1}{2}d_\H(gi\R_+,i\R_+)>0.
\end{equation}
\end{prop}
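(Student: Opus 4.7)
The plan is to use the explicit relation \eqref{MMW-parameter-relation} expressing $v(g)$ in terms of the parameter $\delta(g) = 2|ad+bc|$, and independently compute the hyperbolic distance $d_{\H}(g\mathcal{I}, \mathcal{I})$ from the boundary endpoints of the two geodesics, where $\mathcal{I} = i\R_+$.

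First I would locate the endpoints. The geodesic $\mathcal{I}$ has boundary endpoints $\{0, \infty\}$, while $g\mathcal{I}$ has endpoints $\{g\cdot 0, g\cdot \infty\} = \{b/d,\, a/c\}$. Since $g \notin S$, Lemma \ref{full-hyperbolic-decomposition} gives $abcd > 0$, so $b/d$ and $a/c$ are nonzero and of the same sign. Hence $g\mathcal{I}$ is a Euclidean semicircle lying entirely in one of the open half-planes $\{\Re z > 0\}$ or $\{\Re z < 0\}$, and in particular is disjoint from $\mathcal{I}$.

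Next I would apply the standard formula for the hyperbolic distance between $\mathcal{I}$ and a disjoint geodesic with real boundary endpoints $\{p, q\}$ of the same sign. Using that $\sinh d_\H(z,\mathcal{I}) = |\Re z|/\Im z$ for $z \in \H$, a short variational argument along the circle parametrisation of the second geodesic yields
\begin{equation}
\cosh d_\H(\mathcal{I}, g\mathcal{I}) = \frac{|p+q|}{|q-p|}.
\end{equation}
Substituting $p = b/d$, $q = a/c$ and using $ad - bc = 1$,
\begin{equation}
\cosh d_\H(\mathcal{I}, g\mathcal{I}) = \frac{|ad+bc|}{|ad-bc|} = |ad + bc| = \frac{\delta(g)}{2}.
\end{equation}

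Finally, combining with \eqref{MMW-parameter-relation} gives $v(g) = \tfrac{1}{2}\arccosh(\delta(g)/2) = \tfrac{1}{2} d_\H(g\mathcal{I}, \mathcal{I})$, and strict positivity is immediate since $g \notin S$ forces $v(g) > 0$ by Lemma \ref{full-hyperbolic-decomposition}. There is no real obstacle: the only point requiring care is the bookkeeping of signs in the distance formula (hence the absolute value matching $\delta(g) = 2|ad + bc|$), and verifying that the endpoints of $g\mathcal{I}$ really lie on the same side of $0$. As an essentially equivalent alternative, one could first use Lemma \ref{full-hyperbolic-decomposition} to reduce to the canonical case $g = m(v) := \begin{pmatrix}\cosh v & \sinh v\\ \sinh v & \cosh v\end{pmatrix}$ (observing that the diagonal factors with positive entries and $\omega$ all preserve $\mathcal{I}$ as a set), and then directly verify $d_\H(m(v)\mathcal{I}, \mathcal{I}) = 2v$ using that $m(v)\mathcal{I}$ has boundary endpoints $\tanh v, \coth v$; the same distance formula then gives $\cosh d_\H = (\tanh v + \coth v)/(\coth v - \tanh v) = \cosh(2v)$.
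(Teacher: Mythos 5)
Your proof is correct, but it takes a genuinely different route from the paper: the paper disposes of this proposition in one line by citing Lemma~1 of Martin--McKee--Wambach \cite{MartinMcKeeWambach:2011} together with the identification $v(g)=\tfrac{1}{2}\arccosh(\delta(g)/2)$ established just before the statement. You instead give a self-contained computation: the endpoints of $g\mathcal{I}$ are $b/d$ and $a/c$, which lie on the same side of $0$ because $abcd>0$ for $g\notin S$, and the standard formula $\cosh d_\H(\mathcal{I},g\mathcal{I})=\abs{p+q}/\abs{q-p}$ together with $ad-bc=1$ gives $\cosh d_\H=\abs{ad+bc}=\delta(g)/2$ directly. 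All the individual steps check out (in particular the variational computation behind the distance formula, whose critical point exists precisely because $pq>0$, and the verification $\cosh d_\H=(\tanh v+\coth v)/(\coth v-\tanh v)=\cosh 2v$ in the reduced case). What your approach buys is independence from the external reference and an explicit identification of where the minimizing points sit; what the paper's citation buys is brevity and the additional information from \cite{MartinMcKeeWambach:2011} that the distance is attained between $iy_1$ and $giy_2^{-1}$, which the paper uses in the subsequent geometric remark and which your argument would need a small extra step to recover.
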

\begin{proof}
This follows from \cite[Lemma 1]{MartinMcKeeWambach:2011} and the above identification. 
\end{proof}

Martin, McKee, and Wambach \cite[Proof of Lemma 1]{MartinMcKeeWambach:2011} found that if $g \in G\backslash S$ is decomposed as in Lemma \ref{full-hyperbolic-decomposition}, then the distance $d_\H(gi\R_+,i\R_+)$ is attained between $iy_1$ on the vertical geodesic and $giy_2^{-1}$ on the second, i.e. \begin{equation}d_\H(i\R_+,\gamma (i\R_+)=d_\H(iy_1,g iy_2^{-1}).\end{equation}
We can now describe Good's theorem. Let $\G$ be a discrete co-finite subgroup of $\sl$, and let $\g_1, \g_2 \in \G$ be primitive hyperbolic elements. Fix scaling elements $\sigma_{1}, \sigma_2\in G$ satisfying \begin{equation}\g_l=\sigma_l^{-1} \begin{pmatrix} m_l&\\&m_l^{-1}\end{pmatrix}\sigma_{l}\end{equation} with $1<\abs{m_l}<\infty$, and write $\Gamma_{m_l}=\sigma_l\langle\g_l\rangle\sigma_l^{-1}.$ Let \begin{equation}0=\lambda_0< \lambda_1\leq \cdots \leq \lambda_N<1/4\end{equation} be the eigenvalues of the automorphic Laplacian on $\GmodH$ below 1/4, and write $\lambda_j=s_j(1-s_j)$ with $s_j>1/2$. We now quote Good \cite[Thm 4, p.116]{Good:1983b} in this special case: 
\begin{thm} \label{goods-theorem} Let $\delta_1, \delta_2\in \{0,1\}$, and $n=(n_1,n_2)\in \Z^2$.  There exist explicit complex constants $c_{1}, \ldots, c_N$ such that          
\begin{align}
\sum_{\substack{
\delta_l(\g)=\delta_l\\
e^{v(\g)}\leq X^{1/2} }}
\!\!\!\!\!\!
e\left(n_1\frac{\log y_1(\g)}{\log m_1^2}+ n_2\frac{\log y_2(\g)}{\log m_2^2}\right)=\delta_{n,0}&\frac{\log m_1^2\log m_2^2}{4\pi\vol{\GmodH}}X\\ &+ \sum_{j=1}^Nc_j X^{s_j}+O_{n}(X^{2/3}).
\end{align}
Here the sum is over double cosets of $\Gamma_{m_1}\backslash (\sigma_1\Gamma\sigma_2^{-1} \cap S^c)\slash\Gamma_{m_2}      $. 
\end{thm}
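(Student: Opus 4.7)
The plan is to follow Good's spectral strategy. Geometrically, the triple $(v(\g),y_1(\g),y_2(\g))$ attached to $\g\in\G_{m_1}\backslash(\sigma_1\G\sigma_2^{-1}\cap S^c)/\G_{m_2}$ records the common perpendicular between $\mathcal{I}=i\R_+$ and $\g\mathcal{I}$: $2v(\g)$ is their hyperbolic distance and $iy_1(\g),\ \g(iy_2(\g)^{-1})$ are its feet. The characters $\chi_{n_\ell}(y)=e(n_\ell\log y/\log m_\ell^2)$ are precisely the characters of the closed geodesic $c_\ell=\G_{m_\ell}\backslash \mathcal{I}\simeq \R/\log m_\ell^2\Z$, so the weighted sum in the statement is a canonical automorphic pairing attached to $(c_1,\chi_{n_1})$ and $(c_2,\chi_{n_2})$. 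The sign conditions $\delta_\ell(\g)=\delta_\ell$ are extracted by Fourier analysis on $(\Z/2\Z)^2$ and can be suppressed in the sketch.

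I would introduce for each geodesic a hyperbolic Eisenstein series
\begin{equation}
    E_\ell(z,s;n_\ell)=\sum_{\g\in\G_{m_\ell}\backslash \sigma_\ell\G}\bigl(\cosh d_\H(\g z,\mathcal{I})\bigr)^{-s}\chi_{n_\ell}\bigl(\operatorname{pr}_{\mathcal{I}}\g z\bigr),
\end{equation}
absolutely convergent for $\operatorname{Re}(s)>1$. Rankin--Selberg unfolding of $\langle E_1(\cdot,s;n_1),E_2(\cdot,\bar s;n_2)\rangle_{\G\backslash\H}$ yields, after integrating out the fibre transverse to $\mathcal{I}$, a Dirichlet series of shape $\kappa(s)\sum_\g(\cosh 2v(\g))^{-s}\chi_{n_1}(y_1(\g))\chi_{n_2}(y_2(\g))$ indexed exactly by the double cosets in the theorem, with $\kappa(s)$ an explicit ratio of gamma functions. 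Expanding one factor in the spectral basis of $L^2(\G\backslash\H)$ gives
\begin{equation}
    \langle E_1,E_2\rangle=\frac{\langle E_1,1\rangle\overline{\langle E_2,1\rangle}}{\vol{\GmodH}}+\sum_j\langle E_1,\phi_j\rangle\overline{\langle E_2,\phi_j\rangle}+\frac{1}{4\pi}\sum_\a\int_{-\infty}^\infty\langle E_1,E_\a(\cdot,\tfrac12+it)\rangle\overline{\langle E_2,E_\a(\cdot,\tfrac12+it)\rangle}\,dt,
\end{equation}
each pairing $\langle E_\ell,\phi\rangle$ being a period integral of $\phi$ against $\chi_{n_\ell}$ along $c_\ell$. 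This delivers meromorphic continuation of the Dirichlet series, whose poles lie at $s=1$ (nonzero only if $n_1=n_2=0$, from the constant eigenfunction), at the small eigenvalues $s_j\in(1/2,1)$, and nowhere to the right of $\operatorname{Re}(s)=1/2$ besides those.

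With this continuation in hand, a Perron-type contour integral converts the Dirichlet series into the partial sum with $e^{v(\g)}\leq X^{1/2}$: the pole at $s=1$ yields the main term $\delta_{n,0}(\log m_1^2\log m_2^2)X/(4\pi\,\vol{\GmodH})$, each $s_j$-pole contributes $c_j X^{s_j}$ with $c_j$ explicitly given by the geodesic periods of $\phi_j$, and the remaining integral is shifted to $\operatorname{Re}(s)=1/2+\eta$. The main obstacle is the error term $O_n(X^{2/3})$: a sharp cutoff produces only $O(X^{1/2+\epsilon})$ from the critical-line integral but a large loss from the truncation, so one must smooth the cutoff on a scale $X^{-\kappa}$ and control the smoothed integral by uniform bounds on the geodesic Fourier coefficients $\langle E_\ell,\phi_j\rangle$ and $\langle E_\ell,E_\a(\cdot,\tfrac12+it)\rangle$, both in the spectral parameter (via the spectral large sieve and Weyl's law) and in $n_\ell$; balancing $\kappa$ against the smoothing loss yields the exponent $2/3$, matching the analogous exponent in the hyperbolic lattice point problem.
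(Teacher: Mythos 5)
The paper does not prove this statement at all: it is quoted directly from Good \cite[Thm 4, p.~116]{Good:1983b}, whose proof proceeds by essentially the spectral route you outline (geodesic-period Poincar\'e/Eisenstein series attached to the two hyperbolic subgroups, unfolding to a Dirichlet series over the double cosets, spectral expansion producing the pole at $s=1$ and at the exceptional $s_j$, and a smoothed Perron-type argument yielding the $X^{2/3}$ error). Your sketch is therefore consistent with the cited source's method, with the caveat that the genuinely hard part --- the uniform bounds on the geodesic periods over the spectrum that actually deliver the exponent $2/3$ --- is asserted rather than carried out.
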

By Weyl's equidistribution theorem it follows that \begin{equation}\left(\frac{\log y_1(\g)}{\log m_1^2}, \frac{\log y_2(\g)}{\log m_2^2}\right)\end{equation} is equidistributed on $(\R\slash\Z)^2$.

\subsection{The ring of integers of \texorpdfstring{$\mathbb{Q}(\sqrt{2})$}{Q(sqrt(2))}}
The arithmetic properties of $\Gamma(2,5)$ are to a large extent controlled by the real quadratic field $K=\mathbb Q(\sqrt{2})$, and its ring of integers $\mathcal O_K$.

The ring is a principal ideal domain, has regulator $\log(\varepsilon)$, discriminant $8$, class number one, and has $\pm 1$ as its only roots of unity. Moreover, any unit $u\in \mathcal O_K$ is of the form $u=\pm \varepsilon^n$ for some $n\in \mathbb Z$. The ring of integers $\mathcal O_K$ comes equipped with an element norm \begin{equation}N(z)=z\cdot \sigma z\in \Z, \textrm{ where } \sigma(x+\sqrt{2}y)=x-\sqrt{2}y,\end{equation} 
and an ideal norm $N((z))=\abs{N(z)}$. The Galois group is generated by the involution $\sigma$. 

Recall that $\mathcal O_K$ is Euclidean and, therefore, a unique factorisation domain; its irreducible elements are 
\begin{enumerate}[label=\roman*)]
\item $\sqrt{2}$,
\item $\rho=a+\sqrt{2}b$ with  $N(\rho)=p=\pm 1\bmod 8$,  
\item $p = \pm 3\pmod 8$,
\end{enumerate}
and their associates. 

The Dedekind zeta function of $K$ factors as 
\begin{equation}
\zeta_K(s)=\zeta(s)L(s, \chi_8),
\end{equation} where $\chi_8(n)=\legendre{8}{p}$ is the Kronecker symbol. The character $\chi_8(n)$ is the unique primitive even Dirichlet character modulo 8. By the class number formula \begin{equation}\label{L-at-1}L(1, \chi_8)=\frac{\log \e}{\sqrt{2}}.\end{equation}

\begin{prop}\label{totally-positive-generator}
    Every  ideal $I\subset \mathcal O_K$ has a totally positive generator, i.e.
    \begin{equation}
        I=(z), \quad\textrm{for some } z\in \mathcal O_K\textrm{ with } z,\sigma(z)>0. 
    \end{equation}
Given two such generators $z_1,z_2$, there exists $m\in \mathbb Z$ such that $z_1=\e^{2m}z_2.$
\end{prop}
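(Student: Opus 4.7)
The plan is to combine the principal-ideal-domain property of $\mathcal O_K$ with an explicit sign-pattern analysis of its units. Since $K$ has class number one, any ideal $I\subset \mathcal O_K$ admits some generator $z_0\in\mathcal O_K$, so the task reduces to multiplying $z_0$ by a suitable unit in order to make both $z_0$ and $\sigma z_0$ positive. Recall that the full unit group is $\{\pm\e^n : n\in\Z\}$, so I only need to show that the map $u\mapsto (\sgn u,\sgn\sigma u)$ from units to $\{\pm 1\}^2$ is surjective.

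To verify surjectivity I would compute the four sign patterns explicitly. Since $N(\e)=(1+\sqrt 2)(1-\sqrt 2)=-1$ and $\e = 1+\sqrt 2 > 0$, we have $\sigma(\e) = 1-\sqrt 2 < 0$. Thus the sign patterns of $1,\e,-1,-\e$ are $(+,+),(+,-),(-,-),(-,+)$ respectively, exhausting $\{\pm 1\}^2$. Given a generator $z_0$ of $I$, I pick the unique unit $u\in\{\pm 1,\pm\e\}$ whose sign pattern matches $(\sgn z_0,\sgn\sigma z_0)$; then $z:=u z_0$ is a totally positive generator of $I$.

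For uniqueness, suppose $z_1,z_2$ are both totally positive generators of the same ideal. Then $z_1 = u z_2$ for some unit $u=\pm\e^n$, and the total positivity of $z_1,z_2$ forces $u = z_1/z_2$ and $\sigma u=\sigma(z_1)/\sigma(z_2)$ to be positive, i.e.\ $u$ is itself totally positive. From the sign table above, $\e^n$ is totally positive exactly when $n$ is even (since $\sigma(\e^n)=(1-\sqrt 2)^n$), while $-\e^n$ is never totally positive. Hence $u=\e^{2m}$ for some $m\in\Z$, giving $z_1 = \e^{2m} z_2$. There is no substantive obstacle here; the whole argument rests on the identity $N(\e)=-1$, which is precisely what makes the totally positive units an index-$4$ subgroup of the full unit group, in bijection with $\{\pm 1\}^2$ via the sign map.
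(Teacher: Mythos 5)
Your proof is correct and follows essentially the same route as the paper: use the PID property to get some generator, then adjust by a unit using the fact that $\sigma(\e)=1-\sqrt 2<0$ to reach total positivity, and for uniqueness observe that the only totally positive units are the even powers of $\e$. Your explicit sign table for $1,\e,-1,-\e$ is just a slightly more systematic phrasing of the paper's case analysis.
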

\begin{proof}
Since $\mathcal O_K$ is a principal ideal domain, the ideal $I$  has a generator $w$. For any unit $u$ the element $uw$ is another generator and if $w, \sigma(w)$ has different signs then $\varepsilon w, \sigma(\varepsilon w)$ has the same sign since $\sigma(\varepsilon)$ is negative. So, without loss of generality, we may assume that $w, \sigma(w)$ has the same sign. If this sign is positive we are done, and if not $-w$ has the desired property.

If two $z_1, z_2$ are totally positive generators for $I$, then $z_1=uz_2$ for some unit $u=\pm \varepsilon^n$.  Since $z_1$ and $z_2$ are totally positive, this is only possible if $u=\varepsilon^{2m}$. 
\end{proof}

Consider now the set of classes of totally positive elements of $\mathcal O_K$ with norm $n$ modulo the equivalence relation $z_1\sim z_2$ if and only if $z_1=\epsilon^{2m}z_2$ for some $m\in \Z$:
\begin{equation}
    D_K(n)=\{z\in \mathcal O_K\vert N(z)=n, z>0, \sigma(z)>0\}\slash \sim.
\end{equation}

For $n>0$ this is in bijection with the set of ideals of $\mathcal O_K$ with norm $n$ via the map $z\mapsto (z)$, as follows from Proposition \ref{totally-positive-generator}. In particular 
\begin{equation}
\label{counting ideals}   \mathcal N_2(n):= \# D_K(n)=\#\{I\subseteq \mathcal O_K\vert N(I)=n\}= \sum_{d\vert n}\chi_{8}(n).
\end{equation} The last equality follows  from the divisibility theory of $K$; see \cite[Satz 882]{Landau:1969a}.

\subsection{Analysis of Weyl sums in  \texorpdfstring{$\mathcal{O}_K$}{OK}}\label{analysis-weyl-sums-hyperbolic}
Consider, for $k$ even,  
\begin{equation}U_k(n)=\sum_{z\in D_K(n)}\lambda(z)^k,
\end{equation}
where \begin{equation}\label{sqrt-basic-gross}\lambda(z)=\abs{\frac{z}{\sigma z}}^{\frac{\pi i}{4\log(\varepsilon)}}\end{equation} is the square root of the basic Gr\"ossencharacter in $\mathcal O_K$, see \cite[§ 10]{Hecke:1920}.
It is straightforward to verify that this is well-defined and that for $n>0$
\begin{equation}\label{another-trivial-bound}
\abs{U_k(n)}\leq U_0(n)=\# \{I\subseteq \mathcal O_K: N(I)=n\}=\sum_{d\vert n} \chi_8(d),
\end{equation}
as follows from \eqref{counting ideals}. In particular $\abs{U_k(p^l)}\leq l+1$, and $\sum_{n\leq x}\abs{U_k(n)}= O(x)$.
If $n_1,n_2\in \N$ are coprime, then the map
\begin{equation}
\begin{array}{ccc}
D_K(n_1)\times D_K(n_2)&\to & D_K(n_1n_2)\\
(z_1,z_2)&\mapsto& z_1z_2
\end{array}
\end{equation}
is an isomorphism. This implies that $U_k(n)$ is multiplicative as a function of $n$. Using the factorisation into irreducible elements we see that 
\begin{equation}
U_k(p)=\begin{cases}
1,&\textrm{ if }p=2,\\
2\cos{\left(\frac{\pi k}{4\log \e}\log\abs{\frac{\rho_p}{\sigma(\rho_p)}}\right)},&\textrm{ if }p=\pm 1\bmod 8,\\
0,&\textrm{ if }p=\pm 3 \bmod 8
.
\end{cases}
\end{equation}
Here the element $\rho_p$ is \emph{any} of the two  totally positive elements modulo $\sim$ satisfying $N(\rho_p)=p=\pm 1\bmod 8$, i.e. $\rho_p$ is any of the two elements of $D_K(p)$. The Galois element $\sigma$ permutes these elements. Since cosine is an even function, the expression above is independent of  the  element we choose. 

In anticipation of  using Theorem \ref{weak-halberstam-richert} we  find the average size of $\abs{U_k(p)}$. This can be done using an effective version of Hecke's equidistribution theorem \cite[Sec. 7]{Hecke:1920}. The following is an adaptation  of a classical result due to Rademacher. Radamacher proved it using good zero-free regions for Hecke $L$-series; a more precise error term was found by Urbjalis \cite{Urbjalis:1964}.
\begin{thm}[Rademacher \cite{Rademacher:1935}]\label{Rademacher} There exists a constant $b>0$ such that for an interval $I\subseteq \R\slash\Z$ we have
\begin{equation}\#\left\{\rho \in D_K(p)\left\vert \begin{matrix}p=\pm 1\bmod 8\leq x \\  \frac{\log\abs{\frac{\rho}{\sigma(\rho)}}}{2\log\e^2 }\in I \end{matrix}\right.\right\}=\abs{I}\li(x)+O(xe^{-b\sqrt{\log x}}).\end{equation}
The implied constant depends only on $K$.
\end{thm}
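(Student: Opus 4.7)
My plan is to apply Weyl's equidistribution criterion and reduce everything to Weyl sums that can be bounded via the prime number theorem for Hecke $L$-functions over $K=\Q(\sqrt{2})$. Let $N(x;I)$ denote the left-hand side of the claimed identity, and set
\begin{equation*}
T_k(x) = \sum_{\substack{p\leq x \text{ prime}\\ p\equiv \pm 1\bmod 8}}\sum_{\rho\in D_K(p)} e\!\left(k\cdot \frac{\log\abs{\rho/\sigma\rho}}{2\log\e^2}\right).
\end{equation*}
By the Erd\"os--Tur\'an inequality, for every positive integer $M$,
\begin{equation*}
\abs{N(x;I)-\abs{I}\,T_0(x)}\ll \frac{T_0(x)}{M}+\sum_{1\leq\abs{k}\leq M}\frac{\abs{T_k(x)}}{\abs{k}}.
\end{equation*}
The main term will come from $T_0(x)$; the task reduces to showing that $T_k(x)$ for $k\neq 0$ is small uniformly over a growing range of $\abs{k}$.

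The $k=0$ term is pure counting. Each split rational prime $p\equiv\pm 1\bmod 8$ contributes exactly two classes in $D_K(p)$ (the totally positive generators of the two prime ideals above $p$), so $T_0(x)=2\pi(x;8,\pm 1)$, and the prime number theorem in arithmetic progressions with de la Vall\'ee Poussin error term gives $T_0(x)=\li(x)+O(x\exp(-c\sqrt{\log x}))$. Thus $\abs{I}\,T_0(x)$ already produces the main term $\abs{I}\li(x)$ with an admissible error.

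For $k\neq 0$, observe that $e(k\log\abs{\rho/\sigma\rho}/(2\log\e^2))=\lambda(\rho)^{2k}$, where $\lambda$ is the Gr\"ossencharakter of \eqref{sqrt-basic-gross}. Since $\lambda^{2k}$ descends to the classes comprising $D_K(p)$, one rewrites
\begin{equation*}
T_k(x)=\sum_{\substack{\mathfrak{p}\subseteq \mathcal{O}_K\text{ prime}\\ N\mathfrak{p}=p\leq x\\ p\equiv\pm 1\bmod 8}}\lambda^{2k}(\mathfrak{p}),
\end{equation*}
a sum of a Hecke Gr\"ossencharakter over degree-one prime ideals. Hecke's theory supplies analytic continuation and a functional equation for $L(s,\lambda^{2k})$, and the standard de la Vall\'ee Poussin argument yields a zero-free region of the form $\Re s>1-c/\log(\abs{k}+\abs{t}+2)$; combined with Perron's formula this produces $T_k(x)=O(x\exp(-b\sqrt{\log x}))$ uniformly for $\abs{k}\leq \exp(c'\sqrt{\log x})$, with implied constants depending only on $K$.

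Choosing $M=\exp(\tfrac12\sqrt{\log x})$ in the Erd\"os--Tur\'an bound then makes both the tail $T_0(x)/M$ and the Weyl-sum contribution $\sum_{1\leq\abs{k}\leq M}\abs{T_k(x)}/\abs{k}$ fit inside the required error $O(x\exp(-b'\sqrt{\log x}))$, completing the proof. The principal obstacle is the uniformity in $k$ of the zero-free region across the family $\{L(s,\lambda^{2k})\}_k$, which must hold for $\abs{k}$ as large as a small power of $\exp(\sqrt{\log x})$: this is a classical result of Landau adapted by Rademacher for Gr\"ossencharakters precisely for questions of this type, and Urbjalis' sharper error term comes from the corresponding refinement via Vinogradov-style zero-free regions.
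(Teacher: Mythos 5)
The paper states this result as a quoted classical theorem of Rademacher and gives no proof of its own, so there is nothing to compare line by line; your argument --- Erd\H{o}s--Tur\'an reduction to Weyl sums, identification of the Weyl sums with sums of Hecke Gr\"ossencharacters $\lambda^{2k}$ over degree-one prime ideals, and the de la Vall\'ee Poussin zero-free region uniform in $k$ up to $\exp(c'\sqrt{\log x})$ --- is precisely the method the paper attributes to Rademacher, and it is correct. The only point worth making explicit is that for odd $k$ the function $\lambda^{2k}$ satisfies $\lambda^{2k}(\e z)=-\lambda^{2k}(z)$ and so is not by itself a character of the ideal group; one must twist by $\sgn(N(\cdot))^k$ to kill the unit $\e$, which leaves the sum over the totally positive representatives in $D_K(p)$ unchanged and is exactly the form of character treated in Hecke's paper.
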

Using this effective equidistribution theorem we can prove the following result:
\begin{lem}\label{corollary-of-Hecke-equidistribution}
   For a non-zero $k\in2\Z$ with  $\log\abs{k}\leq b\sqrt{\log x}$   we have 
   \begin{equation}
      \sum_{\substack{p\leq x\\ p=\pm 1\bmod 8}}\frac{\abs{\cos \left(\frac{\pi k}{4\log \e}\log\abs{\frac{\rho_p}{\sigma(\rho_p)}}\right)}}{p}\leq \frac{1}{\pi}\log\log x+\left(1-\frac{2}{\pi}\right)\log\log k+O(1).
   \end{equation}
\end{lem}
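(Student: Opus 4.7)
The plan is to adapt the argument of Erd\"os--Hall used in the proof of Proposition \ref{Average-Euclidean Weyl sum bounded}, replacing equidistribution of primes in \(\Z[i]\) by the effective version of Hecke equidistribution in \(\mathcal{O}_K\) provided by Theorem \ref{Rademacher}. Write \(\alpha_p := \log\abs{\rho_p/\sigma(\rho_p)}/(2\log\e^2) \in \R/\Z\); since \(2\log\e^2 = 4\log\e\), the sum to bound equals
\begin{equation*}
\Sigma(x) := \sum_{\substack{p \leq x \\ p = \pm 1 \bmod 8}} \frac{\abs{\cos(\pi k \alpha_p)}}{p}.
\end{equation*}
Because \(\abs{\cos(\pi k\alpha)}\) is invariant under \(\alpha \mapsto -\alpha\), the choice of \(\rho_p\) versus \(\sigma(\rho_p)\) is immaterial. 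The overall strategy is to split the primes at a threshold \(T=T(k)\), bound the contribution of small primes trivially and that of large primes via Theorem \ref{Rademacher}, and balance the two.

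Set \(T := \exp((\log\abs{k})^{2})\); the hypothesis \(\log\abs{k} \leq b\sqrt{\log x}\) ensures \(T \leq x\). For \(p \leq T\) I combine \(\abs{\cos}\leq 1\) with Mertens' theorem for primes in arithmetic progression, using that the primes \(= \pm 1 \bmod 8\) have Dirichlet density \(1/2\):
\begin{equation*}
\sum_{\substack{p \leq T \\ p = \pm 1 \bmod 8}} \frac{1}{p} = \tfrac12 \log\log T + O(1) = \log\log\abs{k} + O(1).
\end{equation*}

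For the large primes \(T < p \leq x\), I would integrate the test function \(\alpha \mapsto \abs{\cos(\pi k\alpha)}\) against the effectively equidistributed sequence \((\alpha_p)\) using Theorem \ref{Rademacher}, combined with Abel summation in the weight \(1/p\). The cleanest route is via the Fourier expansion
\begin{equation*}
\abs{\cos(\pi k\alpha)} = \frac{2}{\pi} + \frac{4}{\pi}\sum_{n \geq 1}\frac{(-1)^{n+1}}{4n^2 - 1}\cos(2\pi n k \alpha).
\end{equation*}
The constant term, weighted by \(1/p\) and restricted to \(T<p\leq x\), contributes
\begin{equation*}
\frac{2}{\pi}\bigl(\tfrac12\log\log x-\tfrac12\log\log T\bigr)+O(1) = \frac{1}{\pi}\log\log x - \frac{2}{\pi}\log\log\abs{k} + O(1).
\end{equation*}
Each non-constant mode of frequency \(nk\) is, after step-function approximation and application of Theorem \ref{Rademacher} together with Abel summation in \(1/p\), of size \(O((n\abs{k})e^{-b\sqrt{\log x}}\log x)\); truncating the Fourier series at a polylogarithmic level makes the accumulated error \(o(1)\), and the tail \(\sum 1/(4n^2-1)\) is absolutely summable. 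Adding the two ranges yields exactly the asserted inequality.

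The principal obstacle is keeping the Rademacher error under control in the presence of the parameter \(k\): the per-mode input error \(xe^{-b\sqrt{\log x}}\) must be beaten when summed over the Fourier frequencies (or equivalently over interval indicators of length comparable to \(1/k\)) after integration against the \(1/p\) weight. This is precisely the role of the assumption \(\log\abs{k}\leq b\sqrt{\log x}\), which guarantees \(\log\abs{k}=o(\sqrt{\log x})\) and thereby renders the residual Rademacher contribution negligible, just as in the Gaussian case handled in \cite{ErdosHall:1999}.
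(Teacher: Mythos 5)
Your skeleton coincides with the paper's: split the primes at a threshold $T$ with $\log\log T=2\log\log\abs{k}+O(1)$, bound the contribution of $p\le T$ trivially by $\tfrac12\log\log T+O(1)=\log\log\abs{k}+O(1)$, and use the effective Hecke equidistribution of Theorem \ref{Rademacher} on $T<p\le x$ to replace $\abs{\cos}$ by its mean $2/\pi$, producing $\tfrac1\pi\log\log x-\tfrac2\pi\log\log\abs{k}$; the hypothesis $\log\abs{k}\le b\sqrt{\log x}$ is what keeps $T\le x$. The main terms are therefore correct. The genuine gap is in the error analysis for the large primes, where you expand $\abs{\cos(\pi k\alpha)}$ in Fourier modes. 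First, after Abel summation against the weight $1/p$ the error is governed by $e^{-b\sqrt{\log T}}$ at the \emph{bottom} of the range, not by $e^{-b\sqrt{\log x}}$ as you write; under the hypothesis $\abs{k}e^{-b\sqrt{\log x}}$ can be as large as $1$, so the stated per-mode bound $O(n\abs{k}e^{-b\sqrt{\log x}}\log x)$ does not give anything small. Second, and more seriously, converting Rademacher's interval counts into a Weyl sum of frequency $m=nk$ by step-function approximation costs a factor of order $m$ (the total variation of $\cos(2\pi m\alpha)$), while the Fourier coefficients of $\abs{\cos}$ decay only like $n^{-2}$. The mode sum then behaves like $\abs{k}e^{-b\sqrt{\log T}}\sum_{n\le N}n^{-1}\asymp\abs{k}e^{-b\sqrt{\log T}}\log N$, while the truncation tail contributes $O(N^{-1}\log\log x)$, forcing $N\gg\log\log x$; these two constraints cannot simultaneously be met with total error $O(1)$ uniformly in $k$ (already for bounded $k$ an unremovable $\log\log\log x$ survives). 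So the claim that polylogarithmic truncation makes the accumulated error $o(1)$ is unsubstantiated and, as stated, false.

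The paper avoids the mode-by-mode loss by applying Theorem \ref{Rademacher} \emph{once} to the full function: it writes $\abs{\cos v}=(-1)^n\int_v^{\pi/2+n\pi}\sin\theta\,d\theta$ on each of the $k$ monotonicity intervals $E_n$, interchanges sum and integral, and applies Rademacher to the resulting counting functions. Since the total variation of $\alpha\mapsto\abs{\cos(\pi k\alpha)}$ is only $O(k)$, the error is a single $O(kxe^{-b\sqrt{\log x}})$, and after partial summation the choice $\sqrt{\log w}=b^{-1}\log k$ (note the factor $b^{-1}$, which your $T=\exp((\log\abs{k})^2)$ omits: without it $e^{-b\sqrt{\log T}}=\abs{k}^{-b}$, which is insufficient when $b<1$) reduces this to $O(1)$. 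Replacing your Fourier expansion by this one-shot Koksma-type bound repairs the argument; with the Fourier route you would instead need per-mode errors growing strictly slower than linearly in the frequency, which Theorem \ref{Rademacher} plus step functions does not provide.
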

\begin{proof} We follow the strategy in \cite[p. 91-92]{ErdosHall:1999}. By possibly shifting to $-k$ we may assume $k>0$. We start by showing that for all even $k$ we have 
\begin{equation} \label{intermediate-expression}
      \frac{1}{2}\sum_{\substack{p\leq x\\ p=\pm 1\bmod 8\\\rho\in D_K(p)}}{\abs{\cos \left(v_k(\rho)\right)}}=\frac{1}{\pi}\li(x)+O(kxe ^{-b\ \sqrt{\log x}}),
   \end{equation}
where $v_k(\rho)=\frac{\pi k}{4\log \e}\log\abs{\frac{\rho}{\sigma(\rho)}}$. Note that we are summing over both $\rho_p$ and $\sigma(\rho_p)$. This makes it easier to use Rademacher's theorem. 
We choose representatives for $\rho$ such that $v_k(\rho)\in [-\pi/2,\pi k-\pi/2[=E$. We split this interval as a disjoint union according to the sign of $\cos(v)$ i.e.
\begin{equation}
 E=\bigcup_{n=0}^{k-1}E_n\textrm{ where }E_n=[-\frac{\pi}{2}+n\pi ,\frac{\pi}{2}+n\pi[,
\end{equation} and note that $\abs{\cos(v)}=(-1)^n\cos(v)=(-1)^n\int_v^{\frac{\pi}{2}+n\pi}\sin(\theta)d\theta$ for $v\in E_n$. It follows that 
\begin{align}
&\sum_{\substack{p\leq x\\ p=\pm 1\bmod 8\\\rho\in D_K(p)}}\abs{\cos \left(v_k(\rho)\right)}=\sum_{n=0}^{k-1}(-1)^n\sum_{\substack{p\leq x\\ p=\pm 1\bmod 8}}\sum_{\substack{\rho\in D_K(p)\\v_k(\rho)\in E_n}}\int_{v_k(\rho)}^{\frac{\pi}{2}+n\pi}\sin(\theta)d\theta\\
&\quad  =\sum_{n=0}^{k-1}(-1)^n \int_{E_n}(\sum_{\substack{p\leq x\\ p=\pm 1\bmod 8}}\sum_{\substack{\rho\in D_K(p)\\ -\frac{\pi}{2}+n\pi\leq v_k(\rho)\leq \theta }}1)\sin(\theta)d\theta.\\
\intertext{Using Theorem \ref{Rademacher} on the inner sum we find}
&\quad =\sum_{n=0}^{k-1}(-1)^n \int_{E_n}\left(\left(\frac{\theta}{k\pi}-\frac{n-1/2}{k}\right)\li(x)+O(xe^{-c\sqrt{\log x}})\right)\sin(\theta)d\theta.
   \end{align}
Using $\int_{E_n}\sin(\theta)d\theta=0,$ and $\int_{E_n}\theta \sin(\theta)d\theta=2(-1)^n$, we arrive at \eqref{intermediate-expression}. 
It follows that for any $2<w\leq x$ we have 
\begin{align}\label{another-intermediate}
\sum_{\substack{p\leq x\\ p=\pm 1\bmod 8}}\frac{1}{p}{\abs{\cos \left(\frac{\pi k}{4\log \e}\log\abs{\frac{\rho_p}{\sigma(\rho_p)}}\right)}}\leq &\frac{1}{2}\log\log w+O(1)\\&+\frac{1}{\pi}\log\left(\frac{\log x}{\log w}\right) +O(ke^{-b\sqrt{\log w}}).
\end{align}
Here we have estimated the sum up $w$ trivially, and used partial summation and \eqref{intermediate-expression} on the rest. After that we used $\int \li(x)/x^2dx=\log\log x+O(1).$ If we choose $w$ subject to $\log(w)=(b^{-1}\log k)^2$, then the last term is bounded. The condition $w\leq x$ gives the condition on $k$, and we arrive at the claim.

\end{proof}
\begin{lem}\label{powerful-bound}
Let $k$ be an even non-zero integer. Then there exists a constant $b$ such that for $\log\abs{k}\leq b\sqrt{\log x}$ we have
\begin{equation}
    \sum_{n\leq x}\abs{U_k(n)}=O\left(x\left(\frac{\log^2\abs{k}}{\log x}\right)^{1-2/\pi}\right).
\end{equation}
\end{lem}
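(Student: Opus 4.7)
The plan is to adapt the strategy used in the proof of Proposition \ref{Average-Euclidean Weyl sum bounded} and invoke the weak Halberstam--Richert inequality (Theorem \ref{weak-halberstam-richert}) applied to the non-negative multiplicative function $f = \abs{U_k}$. The structural hypotheses of Theorem \ref{weak-halberstam-richert} are immediate: $\abs{U_k(p^l)} \leq l+1 = O(l)$ from \eqref{another-trivial-bound}, and $\sum_{n \leq x}\abs{U_k(n)} \leq \sum_{n \leq x} U_0(n) = O(x)$ by the standard ideal-counting estimate in $\mathcal{O}_K$ (equivalently, by summing \eqref{counting ideals}, whose Dirichlet series factors as $\zeta_K(s) = \zeta(s)L(s,\chi_8)$ with residue \eqref{L-at-1} at $s=1$).

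The heart of the matter is then to estimate $\sum_{p \leq x}(\abs{U_k(p)}-1)/p$, and I plan to carry this out by splitting according to the residue of $p$ modulo $8$. Using the explicit formula for $U_k(p)$ from Section \ref{analysis-weyl-sums-hyperbolic}, the prime $p = 2$ contributes zero, primes $p = \pm 3 \bmod 8$ contribute $-\sum 1/p$, and primes $p = \pm 1 \bmod 8$ contribute $\sum (2\abs{\cos(v_k(\rho_p))} - 1)/p$ in the notation of Lemma \ref{corollary-of-Hecke-equidistribution}. Mertens' theorem combined with Dirichlet's theorem on primes in arithmetic progressions gives $\sum_{p \leq x,\, p = a \bmod 8} p^{-1} = \tfrac{1}{4}\log\log x + O(1)$ for each residue $a$ coprime to $8$, while Lemma \ref{corollary-of-Hecke-equidistribution} handles the cosine-weighted sum in the range $\log\abs{k} \leq b\sqrt{\log x}$. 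Putting these ingredients together should yield
\begin{align}
\sum_{p \leq x}\frac{\abs{U_k(p)}-1}{p} &\leq \frac{2}{\pi}\log\log x + 2\left(1-\frac{2}{\pi}\right)\log\log\abs{k} - \log\log x + O(1) \\
&= \left(1-\frac{2}{\pi}\right)\log\!\left(\frac{\log^2\abs{k}}{\log x}\right) + O(1),
\end{align}
so exponentiating gives
\begin{equation}
\exp\!\left(\sum_{p \leq x}\frac{\abs{U_k(p)}-1}{p}\right) \ll \left(\frac{\log^2\abs{k}}{\log x}\right)^{1-2/\pi}.
\end{equation}

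Feeding this into Theorem \ref{weak-halberstam-richert} produces
\begin{equation}
\frac{1}{x}\sum_{n \leq x}\abs{U_k(n)} \ll \left(\frac{\log^2\abs{k}}{\log x}\right)^{1-2/\pi} + \frac{1}{\log x}.
\end{equation}
Since $\abs{k} \geq 2$ and $2 - 4/\pi > 0$, the factor $(\log\abs{k})^{2-4/\pi}$ is bounded below, so the first term absorbs the $1/\log x$ once $x$ is sufficiently large; for bounded $x$ the statement is trivial from $\sum_{n \leq x}\abs{U_k(n)} = O(x)$. The constraint $\log\abs{k} \leq b\sqrt{\log x}$ is inherited directly from Lemma \ref{corollary-of-Hecke-equidistribution}. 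The genuinely non-trivial input is already packaged in that lemma; the only real step here is the careful bookkeeping in the residue-class decomposition of the prime sum, which I do not expect to pose any serious obstacle.
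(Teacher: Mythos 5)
Your proposal is correct and follows essentially the same route as the paper: apply Theorem \ref{weak-halberstam-richert} to $f=\abs{U_k}$, verify the hypotheses via \eqref{another-trivial-bound}, and control $\sum_{p\le x}(\abs{U_k(p)}-1)/p$ by combining Mertens-type estimates with Lemma \ref{corollary-of-Hecke-equidistribution}. The paper's proof is just a terser version of the same bookkeeping, and your residue-class computation and the absorption of the $1/\log x$ term are both sound.
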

\begin{proof}We use Theorem \ref{weak-halberstam-richert} with $f(n)=\abs{U_k(n)}$. The relevant assumptions were checked above. Using \begin{equation}
\sum_{p\leq x}\frac{1}{p}=\log\log x+O(1),
\end{equation}
and Lemma \ref{corollary-of-Hecke-equidistribution} the claim follows. 
\end{proof}

\subsection{Parametrisation  of \texorpdfstring{$\G(2,5)$}{G(2,5} }In this section we return 
 to the quaternion group $\G(2,5)$, see \eqref{quaternion group}, and show how we can parametrise parts of it using $\mathcal O_{K}$, where $K=\Q(\sqrt{2})$. Consider for $\g\in \Gamma(2,5)$ the two algebraic integers 
\begin{align}
z_1(\g)&=x_0+x_1\sqrt{2}\in \mathcal O_K, \\
z_2(\g)&=x_2+x_3\sqrt{2} \in \mathcal O_K.
\end{align}
By the determinant condition we deduce that $N(z_1(\g))-5N(z_2(\g))=1$ so $N(z_1(\g))=5N(z_2(\g))+1$. It is now straightforward to verify the following proposition:
\begin{prop}\label{prop:basic-hyperbolic}\phantom{123}
\begin{enumerate}
    \item \label{prop:basic-hyperbolic-injective}The map $\g\mapsto (z_1(\g),z_2(\g)) $ is injective. 
    \item If $\g\in \G\cap S^c$ then $N(z_1(\g))$, $N(z_2(\g))$ are non-zero and have the same sign. Writing $N(z_1(\g))=5N(z_2(\g))+1=5n+1$, we have the following statements.   \begin{enumerate}
    \item \label{prop:basic-hyperbolic-length} if $\delta_1(\g)+\delta_2(\g)$ is even, then $n\in \N$ and $\delta(\g)=20n+2$.
    \item if $\delta_1(\g)+\delta_2(\g)$ is odd, then $n\in -\N$ and $\delta(\g)=-20n-2$.
    \end{enumerate}
    \item \label{prop:basic-hyperbolic-quotients}If $\g'=h_0^{j_1}\g h_0^{j}$, then 
    \begin{align}
            z_1(\g')&=\varepsilon^{2(j_1+j_2)}z_1(\g),\\
            z_2(\g')&=\varepsilon^{2(j_1-j_2)}z_2(\g).
    \end{align}
   
    \item \label{prop:basic--hyperbolic-angles}We have
     \begin{align}
e\left(\frac{\log y_1(\g)}{\log \varepsilon^4}\right)
    &=\left(\lambda(z_1(\g))\lambda(z_2(\g))\right),\\ 
e\left(\frac{\log y_2(\g)}{\log \varepsilon^4}\right)
    &=\left(\lambda(z_1(\g))/\lambda(z_2(\g))\right),\\ 
    \end{align}
where $\lambda(z)$ is the square root of the basic Gr\"ossencharacter in $\mathcal O_K$, see \eqref{sqrt-basic-gross}, \cite[§ 10]{Hecke:1920}.
    
\end{enumerate}
\end{prop}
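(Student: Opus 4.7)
The strategy is a direct verification: every assertion reduces to an algebraic identity between the matrix entries $a,b,c,d$ of $\gamma\in\Gamma(2,5)$ and the algebraic integers $z_1(\gamma)=x_0+x_1\sqrt2$, $z_2(\gamma)=x_2+x_3\sqrt2$. The key observation to exploit throughout is that under the Galois involution $\sigma$ we have
\begin{equation}
 a=z_1(\gamma),\qquad d=\sigma(z_1(\gamma)),\qquad b=\sqrt5\,z_2(\gamma),\qquad c=\sqrt5\,\sigma(z_2(\gamma)),
\end{equation}
so in particular $ad=N(z_1(\gamma))$ and $bc=5\,N(z_2(\gamma))$. Injectivity in (1) is then immediate: $\{1,\sqrt2\}$ is a $\Q$-basis of $K$, so $z_1(\gamma)$ recovers $(x_0,x_1)$ and $z_2(\gamma)$ recovers $(x_2,x_3)$, which recover $\gamma$.

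For (2), I would first compute $1=\det(\gamma)=ad-bc=N(z_1)-5\,N(z_2)$, which gives $N(z_1)=5n+1$ with $n=N(z_2)$. Next, I would combine this with the characterisation of $S$ in Lemma~\ref{full-hyperbolic-decomposition}: $\gamma\notin S$ forces $abcd>0$, hence $ad$ and $bc$ have the same nonzero sign, so $N(z_1)$ and $N(z_2)$ have the same sign and $n\neq 0$. For the claim about $\delta(\gamma)=2|ad+bc|$, I compute
\begin{equation}
 ad+bc=N(z_1)+5N(z_2)=10n+1,
\end{equation}
so $\delta(\gamma)=2|10n+1|$. The parity of $\delta_1(\gamma)+\delta_2(\gamma)$ is determined by the sign table in Lemma~\ref{full-hyperbolic-decomposition}: $\delta_1+\delta_2$ is even exactly when $\mathrm{sign}(a,b,c,d)$ falls in the $(0,0)$ or $(1,1)$ row, and a case check shows these are exactly the rows with $ad>0$ and $bc>0$, i.e.\ with $10n+1>0$, i.e.\ $n\ge1$. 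The complementary case gives $n\le-1$, and the two formulas $\delta(\gamma)=20n+2$ or $-20n-2$ follow by unfolding the absolute value.

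For (3), I would just multiply out $h_0^{j_1}\gamma h_0^{j_2}$, noting that conjugation by $h_0^{j}$ on the left multiplies rows by $\varepsilon^{\pm 2j}$ and on the right multiplies columns by $\varepsilon^{\pm 2j}$, so $a,b,c,d$ transform by $\varepsilon^{2(j_1+j_2)}$, $\varepsilon^{2(j_1-j_2)}$, $\varepsilon^{-2(j_1-j_2)}$, $\varepsilon^{-2(j_1+j_2)}$ respectively. Since $\sigma(\varepsilon)=-\varepsilon^{-1}$ and the powers involved are even, the formulas for $z_1(\gamma')$ and $z_2(\gamma')$ follow, and the claimed scalings are consistent with the relations $d=\sigma(z_1)$ and $c=\sqrt5\,\sigma(z_2)$.

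Finally, for (4), I would plug the identifications of $a,b,c,d$ into the formulas $y_1=|ab/cd|^{1/2}$ and $y_2=|ac/bd|^{1/2}$ from Lemma~\ref{full-hyperbolic-decomposition}. The $\sqrt5$'s cancel and one finds
\begin{equation}
 |ab/cd|=\left|\tfrac{z_1}{\sigma z_1}\right|\cdot\left|\tfrac{z_2}{\sigma z_2}\right|,\qquad |ac/bd|=\left|\tfrac{z_1}{\sigma z_1}\right|\Big/\left|\tfrac{z_2}{\sigma z_2}\right|.
\end{equation}
Taking logarithms, dividing by $\log\varepsilon^4=4\log\varepsilon$, and multiplying by $2\pi i$ converts this into the definition of $\lambda$ in \eqref{sqrt-basic-gross}, giving the product and quotient of $\lambda(z_1)$ and $\lambda(z_2)$ claimed.

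The only genuinely nontrivial point is the case analysis for (2), where one has to correlate the parity of $\delta_1+\delta_2$ in Lemma~\ref{full-hyperbolic-decomposition} with the sign of $10n+1$; the remaining items are essentially bookkeeping between $K$-arithmetic and $2\times2$ matrix entries.
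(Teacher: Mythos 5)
Your proof is correct and is exactly the direct entry-by-entry verification the paper intends (the paper merely asserts the proposition is ``straightforward to verify'' and leaves it as an exercise); the identifications $a=z_1$, $d=\sigma(z_1)$, $b=\sqrt{5}\,z_2$, $c=\sqrt{5}\,\sigma(z_2)$ together with $\sigma(\varepsilon^{2k})=\varepsilon^{-2k}$ and the formulas of Lemma~\ref{full-hyperbolic-decomposition} do all the work, and your sign/parity case analysis for part (2) matches the sign table there. No gaps.
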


Note that Proposition \ref{prop:basic-hyperbolic} \eqref{prop:basic--hyperbolic-angles} implies that
\begin{align}
\frac{\log y_1(\g)}{\log\e^4}&=\frac{1}{2}\left(\frac{\log\abs{\frac{z_1(\g)}{\sigma z_1(\g)}}}{\log \e^4}+\frac{\log\abs{\frac{z_2(\g)}{\sigma z_2(\g)}}}{\log \e^4}\right)\bmod 1,\\
\frac{\log y_2(\g)}{\log\e^4}&=\frac{1}{2}\left(\frac{\log\abs{\frac{z_1(\g)}{\sigma z_1(\g)}}}{\log \e^4}-\frac{\log\abs{\frac{z_2(\g)}{\sigma z_2(\g)}}}{\log \e^4}\right)\bmod 1.
\end{align}

We want to describe the intersection of $\G(2,5)$ with $S\backslash s$ and $s$. Note that if $\g=\begin{pmatrix}
    a&b\\c&d
\end{pmatrix}$ then $abcd=N(z_1)5N(z_2)=(5N(z_2)+1)5N(z_2).$ If $N(z_2)\leq 0$ then $(5N(z_2)+1)<0$ so we have $abcd\geq 0$. If, on the other hand,  $N(z_2)\geq 0$ then $(5N(z_2)+1)>0$ so also in this case $abcd\geq 0$. 

We see that $abcd$ vanishes if and only in $N(z_2)=0$. This happens precisely if $N(z_1)=1$ i.e. if $z_1$ is a unit with norm 1. But this means that $\g$ is a power of $h_0$. 

Summarizing we have shown that 
\begin{equation}
 \G(2,5)\cap s= H \textrm{ and } \G(2,5)\cap (S\backslash s)= \emptyset.  
\end{equation}

For a double coset $[\g]\in H\backslash (\G(2,5)-H)\slash H$ we note that we have $\delta_1(\g)=\delta_2(\g)=0$ precisely if all the four entries of $\g$ has the same sign. 
\begin{thm}\label{a cool map} The map
\begin{equation}
    \begin{array}{ccc}
    \psi:\{[\g]\vert {\delta_1(\g)=\delta_2(\g)=0, bc=5n}\}\slash\{\pm I\}&\to& D_K(5n+1)\times D_K(n)\\
    \g&\mapsto & (\abs{z_1(\g)}, \abs{z_2(\g)})
    \end{array}
\end{equation} is well-defined, two-to-one, and surjective. 
\end{thm}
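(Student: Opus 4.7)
The plan is to verify the three assertions—well-definedness, surjectivity, and two-to-oneness—by constructing an explicit almost-inverse parametrisation of the domain in terms of totally positive elements of $\mathcal{O}_K$.

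For well-definedness, I would first invoke Lemma \ref{full-hyperbolic-decomposition} to note that $\delta_1(\g)=\delta_2(\g)=0$ forces all four entries of $\g$ to share a common sign; after applying the $\pm I$ quotient we may assume they are all positive, and then $z_1(\g)=a$, $\sigma z_1(\g)=d$, $\sqrt{5}\,z_2(\g)=b$ and $\sqrt{5}\,\sigma z_2(\g)=c$ are all positive, so both $z_i(\g)$ are themselves totally positive and equal to $\abs{z_i(\g)}$. Proposition \ref{prop:basic-hyperbolic}\eqref{prop:basic-hyperbolic-length} (the even-sum case, which forces $n\geq 0$) then gives $N(z_1(\g))=5n+1$ and $N(z_2(\g))=n$, placing the pair in $D_K(5n+1)\times D_K(n)$. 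Independence of the double-coset representative follows from Proposition \ref{prop:basic-hyperbolic}\eqref{prop:basic-hyperbolic-quotients}, which scales each $z_i$ by an integer power of $\e^2$, i.e., within its equivalence class.

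For surjectivity, given totally positive representatives $(z_1^*,z_2^*)$ of a class in $D_K(5n+1)\times D_K(n)$, I would write down the candidate matrix
\begin{equation}
\g_*=\begin{pmatrix} z_1^* & \sqrt{5}\,z_2^*\\ \sqrt{5}\,\sigma z_2^* & \sigma z_1^*\end{pmatrix},
\end{equation}
observe that $\det\g_*=N(z_1^*)-5\,N(z_2^*)=(5n+1)-5n=1$ so $\g_*\in\G(2,5)$, that all entries are positive (so $\delta_1(\g_*)=\delta_2(\g_*)=0$), and that $bc=5\,N(z_2^*)=5n$. Hence $\g_*$ lies in the domain and $\psi([\g_*])=(z_1^*,z_2^*)$.

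For the two-to-one count, I would parametrise the fibre over $(z_1^*,z_2^*)$. Working modulo $\pm I$, the well-definedness analysis shows every preimage admits a representative with all positive entries; by total positivity, this representative necessarily takes the form
\begin{equation}
\g_{m_1,m_2}=\begin{pmatrix} \e^{2m_1}z_1^* & \sqrt{5}\,\e^{2m_2}z_2^*\\ \sqrt{5}\,\e^{-2m_2}\sigma z_2^* & \e^{-2m_1}\sigma z_1^*\end{pmatrix}
\end{equation}
for some $(m_1,m_2)\in\Z^2$. By Proposition \ref{prop:basic-hyperbolic}\eqref{prop:basic-hyperbolic-quotients}, two such matrices lie in the same $H$-double coset precisely when $(m_1'-m_1,m_2'-m_2)=(j_1+j_2,j_1-j_2)$ for some $j_i\in\Z$. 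The sublattice $L=\{(x,y)\in\Z^2:x\equiv y\pmod 2\}$ produced has index $2$ in $\Z^2$, so the fibre contains exactly two double cosets, distinguished by the parity of $m_1-m_2$.

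The main subtlety is the last step: one must carefully separate the $H$-double-coset equivalence, which identifies points in the index-$2$ sublattice $L$, from the $\pm I$ identification, which glues the \emph{all-positive-entries} regime to the \emph{all-negative-entries} regime. Without the quotient by $\pm I$ the map would be four-to-one rather than two-to-one, so tracking both sources of redundancy explicitly and checking that both parity classes of $m_1-m_2$ actually occur as valid elements of the domain is the one place where carelessness would break the count.
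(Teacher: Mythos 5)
Your proposal is correct and follows essentially the same route as the paper: the same explicit matrix $\begin{pmatrix} z_1 & \sqrt{5}z_2\\ \sqrt{5}\sigma z_2 & \sigma z_1\end{pmatrix}$ for surjectivity, Proposition \ref{prop:basic-hyperbolic} for well-definedness, and the observation that the $H$-double-coset action shifts $(m_1,m_2)$ by $(j_1+j_2,j_1-j_2)$ for the fibre count. Your phrasing of the last step as the index-$2$ sublattice $\{(x,y):x\equiv y\ (\mathrm{mod}\ 2)\}$ in $\Z^2$ is just a clean repackaging of the paper's parity case analysis, and your closing remark about the $\pm I$ quotient (without which the map would be four-to-one, since $-I\notin H$) is accurate.
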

\begin{proof}
It follows from Proposition \ref{prop:basic-hyperbolic} that $\psi$ is well-defined. 

Given $(z_1,z_2)\in D_K(5n+1)\times D_K(n)$ we consider the matrix 
\begin{equation}
    \gamma=\begin{pmatrix}
    z_1 &\sqrt{5}z_2\\
    \sqrt{5}\sigma(z_2)&\sigma(z_1)
\end{pmatrix}\in \G,
\end{equation} which satisfies that $\delta_1(\g)=\delta_2(\g)=0$ and $\sqrt{5}z_2\sqrt{5}\sigma(z_2)=5n$. This shows $\psi$ is surjective.

To see that $\psi$ is two-to-one we note that the two matrices in $\G(2,5)$ given by \begin{equation}\label{two-matrices}
\g=\begin{pmatrix}
    z_1 &\sqrt{5}z_2\\
    \sqrt{5}\sigma(z_2)&\sigma(z_1)
\end{pmatrix}, \quad \g'=\begin{pmatrix}
    z_1\varepsilon^2 &\sqrt{5}z_2\\
    \sqrt{5}\sigma(z_2)&\sigma(z_1\varepsilon^2) 
\end{pmatrix}\end{equation}
represent different double cosets and map to the same element in $D_K(5n+1)\times D_K(n)$. Assume now that $\psi (\g_1)=\psi(\g_2)$. By possibly taking minus the matrix we may assume that all entries of $\g_1$ and $\g_2$ are positive. It follows that there exist integers $n_1, n_2$ such that
\begin{align}
z_1(\g_2)&=z_1(\g_1)\varepsilon ^{2n_1},\\
z_2(\g_2)&=z_2(\g_1)\varepsilon ^{2n_2}. 
\end{align}
Write $z_i(\g_1)=z_i$.  Using that 
\begin{equation}h_0^{j_1}\gamma_1 h_0^{j_2}=\gamma=\begin{pmatrix}
    z_1\varepsilon^{2(j_1+j_2)} &\sqrt{5}z_2\varepsilon^{2(j_1-j_2)} \\
    \sqrt{5}\sigma(z_2\varepsilon^{2(j_1-j_2)})&\sigma(z_1\varepsilon^{2(j_1+j_2)})\end{pmatrix} ,\end{equation}
we see that, if $n_1$, $n_2$ have the same parity, then $\g_1$ and $\g_2$ are in the same double coset, and, if $n_1, n_2$ have different parity, then $\g_2$ is in the same double coset as the second matrix in \eqref{two-matrices}. This shows that the map is two-to-one.
\end{proof}

\subsection{Counting double cosets with prime norm in the hyperbolic case}
In order to find asymptotics for the number of double cosets with prime norm in the hyperbolic case we first prove a variant of the Titchmarsh divisor problem. Recall that in the Titchmarsh divisor problem \cite{Titchmarsh:1930} we want to determine asymptotics for sums like
 \begin{equation} \sum_{n\leq x}1*1(n+1)\Lambda(n),\end{equation}
where $*$ denotes the usual Dirichlet convolution between arithmetical functions.

 Consider 
\begin{equation}\psi(x;q,a)=\displaystyle\sum_{\displaystyle
\substack{n\leq x\\ n=a\bmod q}}\Lambda(n).\end{equation} The famous Bombieri--Vinogradov theorem \cite{Bombieri:1965}, see also \cite{Vaughan:1980}, states that for every $A>0$ there exist a $B>0$ such that 
\begin{equation}
\label{Bombieri-Vinogradov}\sum_{q\leq Q}\max_{\substack{(a,q)=1\\ y\leq x}}\abs{\psi(y,q,a)-\frac{y}{\varphi(q)}}=O_A(\frac{x}{\log^A(x)}),
\end{equation} for $Q=O(\frac{x^{1/2}}{\log^B x})$. This suffices - when combined with the Brun--Titchmarsh inequality 
\begin{equation}\label{Brun-Titchmarsh}
\pi(x+y,q,a)-\pi(x,q,a)<\frac{2y}{\varphi(q)\log(y/q)}, \textrm{ for }(a,q)=1, q< y,
\end{equation}
see \cite[Thm 6.6]{IwaniecKowalski:2004a}, to find the main term and an error term of the form $\frac{x\log \log x}{\log x})$ See \cite{Rodriquez:1965} \cite[Thm 3.9]{HalberstamRichert:1974a}.  
 In order to get better error term estimates, one need to extend the validity of bounds like \eqref{Bombieri-Vinogradov}. In the case of the classical Titchmarsh divisor problem this was done independently by Fouvry \cite{Fouvry:1985} and by Bombieri, Friedlander and Iwaniec \cite{BombieriFriedlanderIwaniec:1986}. They found that for any $A>0$
 \begin{equation} \sum_{n\leq x}1*1(n+1)\Lambda(n)=c_1 x\log x +c_2x +O(x/\log^A x).
 \end{equation}
 Here \begin{equation}c_1=\zeta(2)\zeta(3)/\zeta(6), \quad c_2=c_1\left(2\gamma-1-2\sum_{p}\frac{\log p}{p^2-p+1}\right).\end{equation}
Drappeau \cite[Thm 1.2]{Drappeau:2017} found improvements on the error, and showed that getting better estimates is related to the existence of Siegel zeroes. 

 \subsubsection{A variant of the Titchmarsh divisor problem}\label{sec:Titchmarsh}
We need a variant of the Titchmarsh divisor problem. 
Let \begin{equation}L_5(s, \chi_8)=(1+5^{-s})L(s,\chi_8)\end{equation} be the $L$-function related to $\chi_8$ with the Euler factor at $5$ removed, and let \begin{equation}C'=L_5(1,\chi_8)\prod_{p\neq 5}\left(1+\frac{\chi_8(p)}{p(p-1)}\right).\end{equation} 
Using \eqref{L-at-1}, we see that \begin{equation}
C'=\frac{6}{5}\frac{\log \varepsilon}{\sqrt{2}}\prod_{p\neq 5}\left(1+\frac{\chi_8(p)}{p(p-1)}\right).\end{equation} 
 \begin{thm} \label{Titchmarsh-type-asympt} Let $a=\pm 1\bmod 8$. Then for any $A>0$
 \begin{equation}  \sum_{\substack{n\leq x\\ n=a\bmod 8}}\mathcal N_2(5n+1)\Lambda(n)=\frac{C'}{\varphi(8)}x+O\left(\frac{x}{\log^A(x)}\right).\end{equation}
 \end{thm}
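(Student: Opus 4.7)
The plan is to treat Theorem~\ref{Titchmarsh-type-asympt} as a Titchmarsh-type divisor problem, with the ideal-counting function $\mathcal N_2$ playing the role of the classical $\tau$. The ingredients will be Dirichlet's hyperbola method, the Bombieri--Vinogradov theorem, and the recent level-of-distribution estimates of Assing, Blomer and Li that allow one to push the moduli past the $\sqrt{x}$ barrier.

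First I would expand $\mathcal N_2(5n+1)=\sum_{d\mid 5n+1}\chi_8(d)$ and swap the order of summation. Only $d$ with $\gcd(d,10)=1$ contribute: $\chi_8$ vanishes on even integers and $5\nmid 5n+1$. For such $d$ one has $\gcd(d,8)=1$, so the congruences $n\equiv a\bmod 8$ and $5n+1\equiv 0\bmod d$ pin $n$ to a unique class $r_d\bmod 8d$ with $\gcd(r_d,8d)=1$. Hence
\begin{equation}
S(x):=\sum_{\substack{n\leq x\\ n\equiv a\bmod 8}}\mathcal N_2(5n+1)\Lambda(n)=\sum_{\substack{d\geq 1\\ \gcd(d,10)=1}}\chi_8(d)\,\psi(x;8d,r_d).
\end{equation}
Next I would apply Dirichlet's hyperbola method at a cut-off $y=x^{1/2}/\log^{B}x$ with $B$ large enough for the Bombieri--Vinogradov theorem, writing $S(x)=S_<(x)+S_>(x)$ for the contributions from $d\leq y$ and $d>y$. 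For $S_<(x)$, Bombieri--Vinogradov applies at the moduli $8d\leq 8y$ and gives $\psi(x;8d,r_d)=x/\varphi(8d)+E(x,d)$ with $\sum_{d\leq y}\abs{E(x,d)}=O_A(x\log^{-A}x)$. Using $\varphi(8d)=\varphi(8)\varphi(d)$ and completing the $d$-sum via the Euler-product identity
\begin{equation}
\sum_{\gcd(d,10)=1}\frac{\chi_8(d)}{\varphi(d)}=\prod_{p\neq 2,5}\frac{1+\chi_8(p)/(p(p-1))}{1-\chi_8(p)/p}=L_5(1,\chi_8)\prod_{p\neq 5}\left(1+\frac{\chi_8(p)}{p(p-1)}\right)=C',
\end{equation}
yields $S_<(x)=\frac{C'}{\varphi(8)}x+O_A(x\log^{-A}x)$.

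For the large part $S_>(x)$ I would pass to the complementary divisor. Let $m'(n)=(5n+1)/2^{v_2(5n+1)}$ be the odd part of $5n+1$; then $v_2(5n+1)$ is constant on $n\equiv a\bmod 8$ and $m'(n)\bmod 8$ depends only on $n$ modulo $32$. Splitting the $n$-sum into residue classes mod $32$ freezes $\chi_8(m'(n))=\pm 1$ on each class. The complete multiplicativity of $\chi_8$ on odd integers then gives $\chi_8(d)=\chi_8(m'(n))\chi_8(m'(n)/d)$ for any odd $d\mid m'(n)$, converting $S_>(x)$, up to harmless boundary terms $O(\sqrt{x}\log x)$ from the diagonal $d\asymp\sqrt{m'(n)}$, into a twisted sum of the shape
\begin{equation}
\pm\sum_{e\leq 5\sqrt{x}\log^{B}x}\chi_8(e)\,\psi^*(x;32e,r'_e),
\end{equation}
where $\psi^*$ is a truncated prime-power count in a residue class mod $32e$ produced by CRT.

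The main obstacle is that the moduli $32e$ exceed $\sqrt{x}$ by a logarithmic factor, so classical Bombieri--Vinogradov is no longer directly available. This is precisely the regime addressed by the extended level-of-distribution theorems of Assing, Blomer and Li; their bounds, combined with the cancellation provided by the character $\chi_8(e)$, give $S_>(x)=O_A(x\log^{-A}x)$ with no additional main term. Adding $S_<(x)$ and $S_>(x)$ then produces the claimed asymptotic $S(x)=\frac{C'}{\varphi(8)}x+O_A(x\log^{-A}x)$. The technical heart of the argument lies in the ABL input for the large-divisor part; every other step is a standard manipulation.
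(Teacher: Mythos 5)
Your overall architecture (hyperbola method, Bombieri--Vinogradov for the small divisors, Assing--Blomer--Li for moduli past $x^{1/2}$) matches the paper's in spirit, but your choice of a fixed cutoff $y=x^{1/2}/\log^Bx$ diverges from what the paper actually does, and it is in the large-divisor part $S_>(x)$ that your argument has a genuine gap. After you pass to the complementary divisor $e=m'(n)/d$ and freeze $n$ in a class $a'\bmod 32$ so that $\chi_8(m'(n))=\epsilon_{a'}$ is constant, applying ABL to the inner sum replaces $\psi^*(x;32e,r'_e)$ by a main term of size roughly $(x-n_0(e))/\varphi(32e)$, where $n_0(e)\asymp ey$ is the left endpoint forced by $e<m'(n)/y$. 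The resulting $e$-sum does \emph{not} cancel: $\sum_{e\le E}\chi_8(e)/\varphi(32e)$ converges (with the usual $1/\varphi=\tfrac1e\sum_{f\mid e}\mu^2(f)/\varphi(f)$ device) to a nonzero constant of the same Euler-product type as $C'$, so each frozen class contributes $\epsilon_{a'}\cdot c\,x+o(x)$ with $c\neq0$. Attributing the bound $S_>(x)=O(x\log^{-A}x)$ to ``the cancellation provided by the character $\chi_8(e)$'' is therefore not correct; the only cancellation available is \emph{between} residue classes, via the signs $\epsilon_{a'}$. That cancellation does in fact occur --- for $a=1$ one checks $\chi_8((5n+1)/2)=-1$ for $n\equiv1\bmod{16}$ and $+1$ for $n\equiv9\bmod{16}$, with identical constants $c$ for each class --- but your proof must identify and verify it, and as written it does not. (A secondary point: the inner $n$-range in $S_>$ depends on $e$, which is not covered by ABL as stated; you need the same splitting into intervals $xV^{r+1}<n\le xV^r$ plus Brun--Titchmarsh that the paper deploys for its $\Sigma_2$, so these boundary effects are not merely ``harmless''.)

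For comparison, the paper sidesteps this entire issue by taking the hyperbola cutoff at the symmetric point $\sqrt{m}$ with $m=(5n+1)/2^{v}$: pairing $k$ with $m/k$ gives $\chi_8(k)+\chi_8(m/k)=\chi_8(k)\bigl(1+\chi_8(m)\bigr)$, so $\mathcal N_2(5n+1)$ is either identically $0$ (when $\chi_8(m)=-1$, e.g.\ $n\equiv1\bmod{16}$) or exactly $2\sum_{k\mid m,\,k<\sqrt m}\chi_8(k)$ (e.g.\ $n\equiv9\bmod{16}$); there is no large-divisor part at all, and the inter-class cancellation you are missing becomes manifest. The price is that the cutoff $\sqrt{(5n+1)/2^v}$ depends on $n$ and exceeds $\sqrt x$ by a constant factor, which is exactly why ABL (rather than Bombieri--Vinogradov) is needed for the main sum $\Sigma_1$ and why the $n$-dependent truncation forces the $\Sigma_2$ interval-splitting argument. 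Your route can be repaired, but you must supply the residue-class cancellation explicitly.
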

\begin{rem} Theorem \ref{Titchmarsh-type-asympt} is analogous to the Titchmarsh divisor problem in the following sense: The Titchmarsh divisor problem asks for asymptotics with error terms of $\sum_{n\leq x}1*1(n+1)\Lambda(n)$ and since  $\mathcal N_2(5n+1)=1*\chi_8(5n+1)$ the first expression in Theorem \ref{Titchmarsh-type-asympt} is an analogous sum over a linear shift with $n$ in an  arithmetic progression. Assing, Blomer and Li \cite{AssingBlomerLi:2021} studied such sums with $5$ replaced by $\pm 1$ and without the arithmetic progression. We use a variant of their method.  One can prove, using a slight variation of the proof given for Theorem \ref{Titchmarsh-type-asympt} that the same asymptotics hold, when $a=\pm 3 \bmod 8$ proving equidistribution among the four residue classes modulo $8$. 
\end{rem}
The proof of Theorem \ref{Titchmarsh-type-asympt} uses a recent result by Assing, Blomer and Li. Here we write $a| b^{\infty}$ to mean that $a$ has only prime divisors of $b$. 
\begin{thm}{(\cite[Thm 2.1]{AssingBlomerLi:2021})}\label{Assing-Blomer-Li-thm}
    There exist $0<\delta<1/2$ with the following property: Let 
    \begin{enumerate}
    \item $x\geq 2$ and $Q\leq x^{1/2+\delta}$, $A,C>0$, 
    \item $c,d\in \N$ with $d\vert c^\infty$, and $c,d\leq \log^C x$,
    \item $c_0,d_0\in \Z$ with $(c,c_0)=(d,d_0)=1$,
    \item $a_1,a_2\in \Z\backslash\{0\}$ with $\abs{a_1}\leq x^{1-\delta}$, $\abs{a_2}\leq x^\delta$.
    \end{enumerate} 
    Then
    \begin{equation}
    \sum_{\substack{q\leq Q\\ (q,a_1a_2)=1\\q=c_0\bmod c}}\Big(    \sum_{\substack{n\leq x\\ n=a_1\overline a_2\bmod q\\n=d_0\bmod d}}\Lambda(n)-\frac{x}{\varphi(qd)}\Big)=O_{A,C}\left(\frac{x}{\log^A(x)}\right).
    \end{equation}
\end{thm}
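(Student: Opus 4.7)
The statement is quoted verbatim from \cite[Thm 2.1]{AssingBlomerLi:2021}, so strictly speaking no proof is needed here; it is imported as a black box. Nevertheless, here is the strategy I would outline if one wanted to reconstruct its proof, as it is a Bombieri--Vinogradov-type statement that goes beyond the $\sqrt{x}$ barrier (note that $Q$ is allowed up to $x^{1/2+\delta}$), with the added twists that the moduli $q$ lie in an arithmetic progression $\bmod\ c$, that $n$ lies in an extra progression $\bmod\ d$ with $d\mid c^{\infty}$, and that the residue is of the special form $a_1\bar a_2$.

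The plan is to combine Heath-Brown's identity for $\Lambda(n)$ with the dispersion method of Linnik--Bombieri--Friedlander--Iwaniec, and then to close the estimates using spectral bounds for sums of Kloosterman sums. First I would apply Heath-Brown's identity to write $\Lambda(n)$, restricted to $n\le x$, as a bounded combination of sums of the shape $\sum \mu(m_1)\cdots\mu(m_k)\log(n_1)\mathbf 1_{m_1\cdots m_k n_1\cdots n_{k-1}=n}$ with each variable constrained to dyadic ranges. Depending on the ranges this produces either Type~I sums (one smooth variable of length at least $x^{1/2+\eta}$) or Type~II sums (two rough variables of intermediate lengths). The residue condition $n\equiv a_1\bar a_2\pmod q$ is linear in $n$, so after fixing all but one variable the inner sum becomes an incomplete character sum or linear congruence sum that can be made unconditional via Poisson/Pólya--Vinogradov or the Siegel--Walfisz theorem. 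The additional progression $n\equiv d_0\pmod d$ with $d\mid c^{\infty}$ is harmless because $d\le \log^C x$ and Siegel--Walfisz is uniform in such small moduli.

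The Type~I contribution can be handled by completing the inner sum and bounding the resulting arithmetic sums, using the Siegel--Walfisz theorem to take care of the $\bmod\ d$ restriction and summing trivially over $q\le Q$ in its progression $\bmod\ c$. This part gives an admissible error $O(x/\log^A x)$ as long as one smooth variable is long enough. The Type~II contribution is where the whole game is played: after Cauchy--Schwarz and opening the resulting inner absolute square (the dispersion step), one is led to estimate sums of the form
\begin{equation}
\sum_{q_1,q_2\le Q}\sum_{n}e\!\left(\frac{a_1\bar a_2 h}{q_1 q_2}\right)\alpha(n),
\end{equation}
which, after applying Poisson summation in $n$, reduces to incomplete Kloosterman sums to modulus $q_1 q_2$ of size up to $Q^2\approx x^{1+2\delta}$. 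To exploit the cancellation available \emph{on average} over such moduli, I would invoke Deshouillers--Iwaniec-type bounds based on the Kuznetsov trace formula, paying attention to the fact that the moduli are restricted to a progression $c_0\pmod c$; this is the technical novelty of \cite{AssingBlomerLi:2021} over the original Bombieri--Friedlander--Iwaniec work.

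The main obstacle is exactly this last step. Getting the extra $\delta$ past $1/2$ requires non-trivial cancellation in averages of Kloosterman sums, and incorporating the congruence $q\equiv c_0\pmod c$ breaks the clean $\mathrm{SL}_2(\mathbb{Z})$ structure that the Kuznetsov formula is usually applied to: one must instead work on a congruence subgroup whose level grows with $c$, track the dependence of spectral parameters on $c$, and ensure that the Weil-type bounds and large sieve inequalities for Kloosterman sums remain uniform in this level. Keeping this uniformity while still saving a power of $\log x$ is where the real technical weight of \cite{AssingBlomerLi:2021} lies, and this is what I would import as a black box rather than reprove.
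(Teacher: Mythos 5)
Your treatment matches the paper's: Theorem \ref{Assing-Blomer-Li-thm} is imported as a black box from \cite[Thm 2.1]{AssingBlomerLi:2021}, and your sketch of the underlying machinery (Heath--Brown identity, dispersion, and spectrally averaged Kloosterman sum bounds uniform in the progression $q\equiv c_0\bmod c$) is a fair description of what that reference actually does. The only caveat is that the statement here is not quite verbatim: the paper notes that it has applied the prime number theorem and \cite[Lem. 5.1]{AssingBlomerLi:2021} to replace the original main term by $x/\varphi(qd)$, a small reformulation you gloss over by calling the quotation exact.
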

This theorem is particularly useful because the error term allows for varying $a_i, c,d$ in certain ranges. We have applied the prime number theorem and \cite[Lem. 5.1]{AssingBlomerLi:2021}) to get it in this form.

\begin{proof}[Proof of Theorem \ref{Titchmarsh-type-asympt}]
We first note that since $\Lambda(n)\leq \log(n)$ we have trivially \begin{equation}\label{one-trivial-bound}
\sum_{\substack{n\leq y\\ n=a\bmod 8}}\mathcal N_2(5n+1)\Lambda(n)=O(y \log y). 
\end{equation} Let $L=\log^B(x)$ for a suitably chosen $B$. It follows from \eqref{one-trivial-bound} that, up to an error of size $O_B(x/\log^{B-1}x)$, the sum in \eqref{Titchmarsh-type-asympt} equals 
\begin{equation}
\sum_{\substack{x/L < n\leq x\\ n=a\bmod 8}}\mathcal N_2(5n+1)\Lambda(n).
\end{equation}

    The arithmetic function $\mathcal N_2(m)$ is multiplicative and equal to $1$ on powers of $2$, so we may always remove the $2$-part of $m$. 
    
    If $n=1 \bmod 8$  then $5n+1$ is divisible by $2$ exactly once. 
    
To simplify notation we introduce the function
 \begin{equation}y(x)=\sqrt{(5x+1)/2}, \quad \hbox{with inverse } x(y)=\frac{2y^2-1}{5}.\end{equation}
 For simplicity we denote $y(n)$ by $y_n$.
    We can use Dirichlet's hyperbola method to get
\begin{align}
    \mathcal N_2(5n+1)&=\mathcal N_2(({5n+1})/{2})\\ &=\sum_{\substack{k\vert y_2^2\\ k< y_n }}\chi_8(k)(1+\chi_8(y_n^2))+\chi_8(y_n).
    \end{align}
Here we have set $\chi(y)=0$,  if $y\not\in \N$. Note that  $5n+1$ is never twice a square since two times a square is 0, 2, or 3 $\bmod$ 5.
It follows that 
    \begin{equation}\label{case-n=1}
    \mathcal N_2(5n+1)=\begin{cases}
        0, &\textrm{ if }n=1 \bmod 16,\\
 \displaystyle       2\sum_{\substack{k\vert y_n^2\\  k< y_n}}\chi_8(k),&\textrm{ if }n=9 \bmod 16.
    \end{cases} 
\end{equation}

We can now use these expressions to see that
\begin{align}
\sum_{\substack{x/L<n\leq x\\ n=1\bmod 8}}\mathcal N_2(5n+1)\Lambda(n)&=2\sum_{\substack{x/L<n\leq x\\ n=9\bmod 16}}\sum_{\substack{k\vert y_n^2\\  k< y_n}}\chi_8(k)\Lambda(n)
\\
&=2\sum_{\substack{k\leq y(x)}}\chi_8(k)\sum_{\substack{\max({x}/{L}, x(k))< n\leq x\\ n=9\bmod 16\\ 5n=-1 \bmod 2k}}\Lambda (n).\\
\intertext{When $k$ is divisible by $5$ the inner sum is void, and when $k$ is odd $n=9 \bmod 16$ implies $5n=-1 \bmod 2$. Therefore the two congruence conditions reduce to $n=9\bmod 16$ and $n=-\overline 5\bmod k$, and we get }
&=2\sum_{\substack{k\leq y(x)\\(n,5)=1}}\chi_8(k)\sum_{\substack{\max({x}/{L},x(k))< n\leq x\\ n=9\bmod 16\\ n=-\overline 5 \bmod k}}\Lambda (n)
\\ &=2(\Sigma_1+\Sigma_2).
\end{align} Here we have split the outer sum into two sums:  $\Sigma_1$ equals the sum over $k\leq y(x/L)$, and $\Sigma_2$ denotes the rest. We have 
\begin{align}
\Sigma_1&=\sum_{\substack{k\leq y(x/L)\\ (k,5)=1}}\chi_8(k)\sum_{\substack{{x}/{L}< n\leq x\\ n=9\bmod 16\\ n=-\overline 5 \bmod k}}\Lambda (n)\\
&=\sum_{b\bmod 8}\chi_8(b)\sum_{\substack{k\leq y(x/L)\\ (k,5)=1\\k=b\bmod 8}}\sum_{\substack{{x}/{L}< n\leq x\\ n=9\bmod 16\\ n=-\overline 5 \bmod k}}\Lambda (n)
\intertext{We can now apply Theorem \ref{Assing-Blomer-Li-thm} twice with $x$ equal to $x$ and $x/L$ respectively, both times with $Q=y(x/L)$, $a_1=-1$, $a_2=5$, $d_0=9$, $d=16$ $c_0=b$, $c=8$. This gives }
&=\sum_{b\bmod 8}\chi_8(b)\sum_{\substack{k\leq y(x/L)\\ (k,5)=1\\k=b\bmod 8}}\left(\frac{x}{\varphi(16k)}-\frac{x/L}{\varphi(16k)}\right)+O(x/\log^{A}(X))\\
&=\sum_{\substack{k\leq y(x/L)\\ (k,5)=1}}\chi_8(k)\left(\frac{x}{\varphi(16k)}-\frac{x/L}{\varphi(16k)}\right)+O(x/\log^{A}(x))\\
&=\frac{C'}{\varphi(16)}x+O(x/\log^A(x)),
\end{align}
where in the last line we have used \cite[Lem 5.2]{AssingBlomerLi:2021}.

We now want to show that $\Sigma_2\ll x/\log^A x$.  
\begin{align}
\Sigma_2&=\sum_{\substack{y(x/L)<k\leq y(x)\\(n,5)=1}}\chi_8(k)\sum_{\substack{x(k)< n\leq x\\ n=9\bmod 16\\ n=-\overline 5 \bmod k}}\Lambda (n).
\end{align}

We want to apply Theorem \ref{Assing-Blomer-Li-thm}, so we need to deal with the fact that the inner sum is over an interval depending on $k$. To address this we let 
\begin{equation}V=(1-\Delta) \textrm{ with } \Delta=\log^{-A/2} x,
\end{equation}
 and split the inner sum in intervals roughly of the form $yV<n\leq y$  as follows:
\begin{align}
\sum_{\substack{x(k)< n\leq x\\ n=9\bmod 16\\ n=-\overline 5 \bmod k}}\Lambda (n)&=\sum_{r=0}^{R(k,x)} \sum_{\substack{\max(x(k),xV^{r+1})< n\leq xV^r\\ n=9\bmod 16\\ n=-\overline 5 \bmod k}}\Lambda (n),\\
\intertext{where $R(k,x)=\min\{r\vert xV^{r+1}<x(k)\}$}
&=\sum_{r=0 }^{R(k,x)-1} \sum_{\substack{xV^{r+1}< n\leq xV^r\\ n=9\bmod 16\\ n=-\overline 5 \bmod k}}\Lambda (n)+ \sum_{\substack{x(k)< n\leq xV^{R(k,x)}\\ n=9\bmod 16\\ n=-\overline 5 \bmod k}}\Lambda (n). 
\end{align}
Observing that for  $k \in \Sigma_2$ we have $xV^{r+1}<x/L$ implies $xV^{r+1}<x(k)$. Therefore, $R(k,x)=O(\log L/\Delta)$ uniformly in $k$. 
Trivially we have 
\begin{align} 
x-x(k)=\sum_{r=0}^{R(k,x)-1} (xV^r-xV^{r+1}) +\left(xV^{R(x,k)}-x(k)\right),
\end{align}
so we arrive at
\begin{align}
\Sigma_2&=\sum_k\chi_8(k)\sum_{\substack{x(k)< n\leq x\\ n=9\bmod 16\\ n=-\overline 5 \bmod k}}\Lambda (n)=\sum_k\chi_8(k)\frac{x-x(k)}{\varphi(16k)}\\
&\quad +\sum_k\chi_8(k)\Big(\sum_{r=0}^{R(k,x)}\sum_{\substack{xV^{r+1}< n\leq xV^r\\ n=9\bmod 16\\ n=-\overline 5 \bmod k}}\Lambda (n)-\frac{xV^r-xV^{r+1}}{\varphi(16k)}\Big)\\
&\quad +\sum_k\chi_8(k)\Big(\sum_{\substack{x(k)< n\leq xV^{R(k,x)}\\ n=9\bmod 16\\ n=-\overline 5 \bmod k}}\Lambda (n)-\frac{xV^{R(k,x)}-x(k)}{\varphi(16k)}\Big)\\
&=\Sigma_{2,1}+\Sigma_{2,2}+\Sigma_{2,3}.
\end{align}
Here all $k$-sums are over $y(x/L)<k\leq y(x)$ with $(n,5)=1$.
Using summation by parts we see that $\Sigma_{2,1}=O(x/L)$.  The contribution from $\Sigma_{2,3}$ is bounded as follows: By Brun--Titchmarsh \eqref{Brun-Titchmarsh} and the definition of $R(k,x)$ the sum that comes after the character is bounded by $O(x\Delta/\varphi(k))$. Summing this over the relevant $k$, and using $\sum_{k\leq x}\varphi^{-1}(k)=O(\log x)$ (See \cite[Lemma 5.1]{AssingBlomerLi:2021}), gives a contribution of $\Sigma_{2,3}=O(x\log^{1-A/2}x).$

Finally, to estimate $\Sigma_{2,2}$, we first split the $k$ sum according to its value mod 8. Then we notice that as a function of $k$ with $y(x/L)<k\leq y(x)$ we have that $R(k,x)$ is decreasing. We then interchange the $k$ and the $r$ sum, and we find
\begin{align}
\Sigma_{2,2}&=\sum_{b\bmod 8}\chi_8(k) \sum_{\substack{k\\k=b\bmod 8}}\sum_{r=0}^{R(k,x)}\Sigma_{2,2}(n,k,r)\\
&=\sum_{b\bmod 8}\chi_8(k)\sum_{r=0}^{R(x)} \sum_{\substack{y(x/L)<k\leq K(x,r)\\(k,5)=1\\k=b\bmod 8}}\Sigma_{2,2}(n,k,r).
\end{align}
Here \begin{align}
\Sigma_{2,2}(n,k,r)&=\sum_{\substack{xzV^{r+1}< n\leq xV^r\\ n=9\bmod 16\\ n=-\overline 5 \bmod k}}\Lambda (n)-\frac{xV^r-xV^{r+1}}{\varphi(16k)},\\R(x)&=R(y(x/L),x)=O(\log L/\Delta),\\
K(x,r)&=\max\{k\vert r\leq R(k,x)\}.\end{align} We may now use Theorem \ref{Assing-Blomer-Li-thm} and we find that 
\begin{align}
    \Sigma_{2,2}=O\left(\sum_{b\bmod 8}\sum_{r=0}^{R(x)}\frac{xV^r}{\log^A(xV^r)}\right)=O\left(\frac{x}{\log^A(x/L)}\Delta\right)=O\left(\frac{x}{\log^{A(1/2-\varepsilon)}(x)}\right).
\end{align}
Here we have used that for $r$ in the sum we have $x/L\leq xV^r$. 
Summarizing we have shown, that for every $A>0$ we have 
\begin{equation}  \sum_{\substack{n\leq x\\ n=1\bmod 8}}\mathcal N_2(5n+1)\Lambda(n)=\frac{C'}{\varphi(8)}x+O\left(\frac{x}{\log^A(x)}\right).\end{equation} This proves the bound for $n=1\bmod 8$. 

To deal with the case of $n=7\bmod 8$  
 we note that in this case $5n+1$ is divisible by 2 exactly twice and the same procedure that lead to \eqref{case-n=1} leads to the following expression in this case:
    \begin{equation}
    \mathcal N_2(5n+1)=\begin{cases}\label{case-n=7}
        0, &\textrm{ if }n=15,23 \bmod 32,\\
 \displaystyle       2\sum_{\substack{k\vert w_n^2\\  k< w_n}}\chi_8(k)+\chi_8(w_n),&\textrm{ if }n=7, 31 \bmod 32.
    \end{cases} 
\end{equation}
    Here $w_n=\sqrt{((5n+1)/4 }$.
We then deal separately with the cases $a=7, 31$ of 
\begin{equation}  \sum_{\substack{n\leq x\\ n=a\bmod 32}}\mathcal N_2(5n+1)\Lambda(n).\end{equation}
The term $\chi_8(w_n)$ introduces a negligible error term, and the same technique which we employed above leads to 
\begin{equation}  \sum_{\substack{n\leq x\\ n=a\bmod 32}}\mathcal N_2(5n+1)\Lambda(n)=\frac{1}{2}\frac{C'}{\varphi(8)}x+O\left(\frac{x}{\log^A(x)}\right),\end{equation} when $a=7$ or $a=31$. Summing the contributions together we find 
\begin{equation}  \sum_{\substack{n\leq x\\ n=7\bmod 8}}\mathcal N_2(5n+1)\Lambda(n)=\frac{C'}{\varphi(8)}x+O\left(\frac{x}{\log^A(x)}\right),\end{equation} which finishes the proof. 

\end{proof}

\subsubsection{Counting over primes in the hyperbolic case}

We are now ready to prove the first part of Theorem \ref{main-theorem-hyphyp}. Recall that we are considering the sequence \begin{equation}\label{h-sequence}
h=(\psi(\g))=\left(\left(\frac{\log y_1}{2\log \e^2}, \frac{\log y_2}{2\log \e^2}\right)\right)\subseteq (\R\slash \Z)^2,\end{equation}
indexed over the set of all $\g\in H\backslash \G(2,5)\slash H$ with all four entries  strictly positive. Equip this index set with the size function $\nu(\g)=bc/5$, and recall from Proposition \ref{prop:basic-hyperbolic}, \eqref{prop:basic-hyperbolic-length} and \eqref{MMW-parameter-relation} that $\nu(\g)= (\cosh(2v(\g))-1)/10$.

\begin{thm}\label{we are getting there}
Consider the sequence $h$ in \eqref{h-sequence}. Then 
\begin{equation}\pi_h(x)=C\li(x)+O\left(\frac{x}{\log^Ax}\right),\end{equation}
where   \begin{equation}
    C=\frac{12}{5}\frac{\log \e}{\sqrt{2}}\prod_{p\neq 5}\left(1+\frac{\chi_8(p)}{p(p-1)}\right).\end{equation} Here $\chi_8$ is the even primitive Dirichlet character modulo 8.
\end{thm}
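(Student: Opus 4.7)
The plan is to convert the prime-counting function into a shifted divisor sum via the bijection of Theorem \ref{a cool map}, and then invoke Theorem \ref{Titchmarsh-type-asympt}. Since Theorem \ref{a cool map} provides a two-to-one surjection from $\{[\g]\mid \delta_1(\g)=\delta_2(\g)=0,\, bc=5n\}/\{\pm I\}$ onto $D_K(5n+1)\times D_K(n)$, and since $-I$ flips all four entries and thus cannot fix a positive double coset, selecting the positive representative in each $\{\pm I\}$-pair gives
\begin{equation}
\pi_h(x)=2\sum_{p\leq x}\mathcal{N}_2(5p+1)\mathcal{N}_2(p).
\end{equation}
Using the formula $\mathcal{N}_2(p)=1+\chi_8(p)$, which equals $2$ for $p\equiv\pm 1\bmod 8$ and vanishes for $p\equiv\pm 3\bmod 8$ (with $p=2,5$ contributing $O(1)$), this reduces to
\begin{equation}
\pi_h(x)=4\sum_{\substack{p\leq x\\ p\equiv \pm 1\bmod 8}}\mathcal{N}_2(5p+1)+O(1).
\end{equation}

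Next I would insert the von Mangoldt function and apply Theorem \ref{Titchmarsh-type-asympt} separately with $a=1$ and $a=7$, summing to obtain
\begin{equation}
\sum_{\substack{n\leq x\\ n\equiv \pm 1\bmod 8}}\mathcal{N}_2(5n+1)\Lambda(n)=\frac{2C'}{\varphi(8)}x+O_A\!\left(\frac{x}{\log^A x}\right)=\frac{C'}{2}x+O_A\!\left(\frac{x}{\log^A x}\right).
\end{equation}
The contribution of proper prime powers $n=p^k$ with $k\geq 2$ is bounded trivially via $\mathcal{N}_2(m)\ll d(m)=O_\e(m^\e)$, giving $O(\sqrt{x}(\log x)^2)$, which is absorbed into the error. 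Partial summation, using $\li(x)=x/\log x+\int_2^x dt/\log^2 t+O(1)$, then removes the $\log p$ weight and yields
\begin{equation}
\sum_{\substack{p\leq x\\ p\equiv \pm 1\bmod 8}}\mathcal{N}_2(5p+1)=\frac{C'}{2}\li(x)+O_A\!\left(\frac{x}{\log^A x}\right).
\end{equation}
Multiplying by the factor $4$ from the first step and recognising that $2C'=C$ then gives the claimed asymptotic.

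The serious analytic work has already been done in the proof of Theorem \ref{Titchmarsh-type-asympt}, which rests on the Assing--Blomer--Li extension (Theorem \ref{Assing-Blomer-Li-thm}) of Bombieri--Vinogradov to a range slightly beyond $x^{1/2}$. Granted that input, the present deduction is essentially bookkeeping: verifying the two-to-one count, exploiting the parity of $\chi_8$ to kill the \lq wrong\rq{} primes, and performing a standard partial-summation clean-up. The main place to be vigilant is the matching of the constants $C$ versus $C'$ and the factor $2$ coming from the $\{\pm I\}$-quotient together with the factor $\mathcal{N}_2(p)=2$ for the surviving residue classes; a miscount here would shift the final constant by a factor of $2$ or $4$.
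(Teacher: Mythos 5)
Your proposal is correct and follows essentially the same route as the paper: both reduce $\pi_h(x)$ via Theorem \ref{a cool map} to $2\sum_{p\leq x}\mathcal{N}_2(5p+1)\mathcal{N}_2(p)=4\sum_{p\equiv\pm1\,(8)}\mathcal{N}_2(5p+1)+O(1)$, insert $\Lambda$, and apply Theorem \ref{Titchmarsh-type-asympt} with $a=1$ and $a=7$, the only cosmetic difference being that the paper phrases the reduction through $\psi_h(x)=\sum_{n\leq x}S_h(0,0,n)\Lambda(n)$ while you perform the partial summation and prime-power removal explicitly at the end. Your constant bookkeeping ($4\cdot\tfrac{C'}{2}=2C'=C$) matches the paper's $\tfrac{8}{\varphi(8)}C'=C$.
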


Note that by Lemma \ref{full-hyperbolic-decomposition} and the discussion before Theorem \ref{a cool map} the set we are indexing over corresponds precisely to $\pm H\Gamma H$  with $\pm \g\notin H$ and $\delta_1(\g)=\delta_2(\g)=0$. We can therefore parametrise this set using Theorem \ref{a cool map}.
Let $\g, \g'$ be the two matrices from \eqref{two-matrices}, which  map to the same element under $\psi$ in Theorem \ref{a cool map}. Observing that
\begin{equation}
    e\left(n_1\frac{\log y_1(\g')}{\log \varepsilon^4}+ n_2\frac{\log y_2(\g')}{\log\varepsilon^4}\right)=(-1)^{n_1+n_2}e\left(n_1\frac{\log y_1(\g)}{\log \varepsilon^4}+ n_2\frac{\log y_2(\g)}{\log\varepsilon^4}\right),
\end{equation}
 we see that, if $n>0$, then the Kloosterman type sum
\begin{equation}
     S_h(n_1,n_2,n)=\sum_{\substack{[\g]\\
\delta_i(\g)=0\\
(\cosh 2 v(\g)-1)/10=n}}e\left(n_1\frac{\log y_1(\g)}{\log \varepsilon^4}+ n_2\frac{\log y_2(\g)}{\log\varepsilon^4}\right)
\end{equation}
vanishes, unless $n_1, n_2$ have the same parity. If they do have the same parity, Theorem \ref{a cool map} gives 

\begin{align}\label{kloosterman-two circles}S_h(n_1,n_2,n)&=2\sum_{(z_1,z_2)\in D_K(5n+1)\times D_K(n)}\lambda(z_1)^{n_1+n_2}\lambda(z_2)^{n_1-n_2}\\
&=2U_{n_1+n_2}(5n+1)U_{n_1-n_2}(n).
\end{align}

It follows from Theorem \ref{goods-theorem} that if $m=(n_1,n_2)\in \Z^2$
then
\begin{align}
   \label{sum of hyp-hyp-kloosterman sums} \sum_{n\leq X}S_h(n_1,n_2,n)&=\sum_{\substack{[\g]
   \\
\delta_i(\g)=0\\
e^{2v(\g)}+O(1)\leq 20X+2 }}
\!\!\!\!\!\!
e\left(n_1\frac{\log y_1(\g)}{\log \varepsilon^4}+ n_2\frac{\log y_2(\g)}{\log \varepsilon}\right)\\ &=\delta_{m,0}\frac{(\log \varepsilon)^2}{\pi^2}10 X+ O_{m}(X^{2/3}).
\end{align}
The $n_1=n_2=0$ case reduces, via \eqref{kloosterman-two circles} and \eqref{counting ideals}, to 
\begin{align}
    \sum_{n\leq X}\mathcal N_2(5n+1)\mathcal N_2(n)&=\frac{5(\log \varepsilon)^2}{\pi^2} X+ O(X^{2/3}).
\end{align}
The same asymptotics were found by Hejhal \cite[Théorème 2]{Hejhal:1978}, \cite[Eq (1)]{Hejhal:1982b}, see also \cite[Théorème 8]{Hejhal:1982c}.

\begin{proof}[Proof of Theorem \ref{we are getting there}]
To investigate what happens if we only sum over primes in \eqref{sum of hyp-hyp-kloosterman sums} we define
\begin{equation}\psi_h(x)=\sum_{n\leq x}S_h(0,0,n)\Lambda(n),\end{equation}
where $\Lambda$ is the von Mangoldt function.
Since $S_h(n_1,n_2,n)\ll n^\e$ it is easy to verify that the asymptotical expansion of $\pi_h(x,0,0)$ is equivalent to \begin{equation}\label{equivalent-formulation}\psi_h(x)=C\cdot x+O(x/(\log(x))^A),\end{equation} which  we will prove. Since by \eqref{kloosterman-two circles} and \eqref{counting ideals},
\begin{equation}S_h(0,0,p)=2\mathcal N_2(p)\mathcal N_2(5p+1)=4\delta_{p=\pm 1  (8)}\mathcal N_2(5p+1)\end{equation} we have 
\begin{equation}\label{does-it-end}\psi_h(x)=4\sum_{\substack{n\leq x\\n=\pm 1\bmod 8}}\mathcal N_2(5n+1)\Lambda(n)+O(x/\log^A x).\end{equation}

It now follows from Theorem \ref{Titchmarsh-type-asympt} that
\begin{equation} \psi_h(x)=\frac{8}{\varphi(8)}cx+ O\left(\frac{x}{\log^A(x)}\right),\end{equation}
from which the claim follows.
\end{proof}

\subsection{Equidistribution over primes in the hyperbolic case}
We can now finally prove equidistribution over primes of the sequence $h$ in \eqref{h-sequence}. We first give upper bounds for the Weyl sums:
\begin{thm}\label{bounding-weyl-sums-hyperbolic}Consider the sequence $h$ in \eqref{h-sequence}, and consider integers $m_1,m_2$ not both zero. Then 
\begin{equation}
     \sum_{\substack{
\delta_i(\g)=0\\
\nu(\g)=p\leq x}}e\left(m_1\frac{\log y_1(\g)}{\log \varepsilon^4}+ m_2\frac{\log y_2(\g)}{\log\varepsilon^4}\right)=O\left(\frac{x}{\log x}\frac{1}{\log^{1-\frac{2}{\pi }}x}\right).
\end{equation}

\end{thm}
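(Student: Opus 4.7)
The plan is to adapt the argument used for Theorem \ref{crucialbounds}, now working in the ring of integers of $K = \Q(\sqrt{2})$. The Weyl sum in the statement equals $\sum_{p \leq x} S_h(m_1, m_2, p)$, where $S_h$ is the hyperbolic Kloosterman-type sum introduced just before \eqref{kloosterman-two circles}. When $m_1, m_2$ have different parity, $S_h(m_1, m_2, n)$ vanishes identically and there is nothing to prove. Otherwise, \eqref{kloosterman-two circles} gives the factorisation
\begin{equation*}
S_h(m_1, m_2, p) = 2\, U_{m_1+m_2}(5p+1)\, U_{m_1-m_2}(p),
\end{equation*}
so the problem reduces to bounding $\sum_{p \leq x}\abs{U_{m_1+m_2}(5p+1)}\,\abs{U_{m_1-m_2}(p)}$.

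To this sum I would apply Theorem \ref{nair-tenenbaum} with multiplicative functions $g_1 = \abs{U_{m_1+m_2}}$, $g_2 = \abs{U_{m_1-m_2}}$ and linear forms $(a_1, b_1) = (5, 1)$, $(a_2, b_2) = (1, 0)$. The hypothesis $g_i(n) \leq d(n)$ is the trivial bound \eqref{another-trivial-bound}, while the remaining conditions $(a_i, b_i) = 1$ and $b_1 a_2 \neq b_2 a_1$ are immediate. Combined with the standard estimate $\prod_{p \leq x}(1 - 2/p) = O((\log x)^{-2})$ this yields
\begin{equation*}
\sum_{p \leq x} \abs{U_{m_1+m_2}(5p+1)}\,\abs{U_{m_1-m_2}(p)} \ll_{m_1, m_2} \frac{x}{\log^3 x}\, \Sigma_+ \Sigma_-,
\end{equation*}
where $\Sigma_\pm = \sum_{n \leq x} \abs{U_{m_1 \pm m_2}(n)}/n$.

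Because $(m_1, m_2) \neq (0, 0)$ and the two integers share parity, at least one of $m_1 \pm m_2$ is a non-zero even integer $k$. For that index, Lemma \ref{powerful-bound} combined with partial summation yields $\Sigma = O_k((\log x)^{2/\pi})$; the condition $\log \abs{k} \leq b\sqrt{\log x}$ is automatic for fixed $m_1, m_2$ and $x$ large. For the other $\Sigma$ (whose index may be zero) the trivial bound $\abs{U_j(n)} \leq \mathcal{N}_2(n)$ from \eqref{another-trivial-bound}, together with $\sum_{n \leq x} \mathcal{N}_2(n) = O(x)$ coming from \eqref{counting ideals}, gives $O(\log x)$ by partial summation. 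Multiplying the three factors produces $x/(\log x)^{2 - 2/\pi}$, which is exactly the claimed bound.

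The whole deduction is a mechanical assembly of results already in place; the real work was concentrated in Lemma \ref{powerful-bound}, whose proof rested on the Rademacher-type effective equidistribution of angles of totally positive generators (Theorem \ref{Rademacher}) combined with the Halberstam--Richert-type inequality (Theorem \ref{weak-halberstam-richert}). Once that savings of $(\log x)^{2/\pi}$ over the trivial bound is available, the present theorem is structurally identical to Theorem \ref{crucialbounds} in the elliptic case.
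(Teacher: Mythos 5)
Your proposal is correct and follows essentially the same route as the paper's proof: reduce to $\sum_{p\le x}S_h(m_1,m_2,p)$, use the parity vanishing and the factorisation \eqref{kloosterman-two circles}, apply Theorem \ref{nair-tenenbaum} to the pair $\abs{U_{m_1+m_2}},\abs{U_{m_1-m_2}}$ on the linear forms $n$ and $5n+1$, and then combine the trivial $O(\log x)$ bound for one factor with Lemma \ref{powerful-bound} plus partial summation for the non-zero even index. Your explicit remark that the constraint $\log\abs{k}\le b\sqrt{\log x}$ is automatic for fixed $m_1,m_2$ is a small point the paper leaves implicit, but otherwise the arguments coincide.
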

\begin{proof} Denoting the sum we want to bound by $B(m_1,m_2,x)$ we see that 
\begin{equation}
B(m_1,m_2,)=\sum_{p\leq x}S_h(m_1,m_2,p).
\end{equation}
Since $S_h(m_1,m_2,p)=0$ except when $m_1,m_2$ have the same parity, we may assume that $m_1,m_2$ have the same parity. We deduce from \eqref{kloosterman-two circles} that 
\begin{equation}
B(m_1,m_2,x)\leq 2\sum_{p\leq x}\abs{U_{n_1+n_2}(p)}\abs{U_{n_1-n_2}(5p+1)}.
\end{equation}
We now let $(a_1,b_1)=(1,0)$, $(a_2,b_2)=(5,1)$ and $g_1(n)=\abs{U_{m_1+m_2}(n)}$, $g_2(n)=\abs{U_{m_1-m_2}(n)}$. We verified in Section \ref{analysis-weyl-sums-hyperbolic} that the functions $g_i$ are multiplicative and satisfies $\abs{g_i(n)}\leq d(n)$. It now follows from Theorem \ref{nair-tenenbaum} that 
\begin{equation}
   \abs{ B(m_1,m_2,x)}\ll \frac{x}{\log^3(x)}\sum_{n_1\leq x}\frac{g_1(n_1)}{n_1}\sum_{n_2\leq x}\frac{g_2(n_2)}{n_2}.
\end{equation}
Since $g_i(n)\leq 1*\chi_8$ we have trivially $\sum_{n\leq x}g_i(n)/n=O(\log x)$. Since $m_1$, and $m_2$ are non-zero with the same parity  $m_1-m_2$, $m_1+m_2$ are even with at least one of them being non-zero. Let $g=g_i$ where $i$ is chosen such that $g_i=\abs{U_k}$ for some non-zero even $k$. We can then use Lemma \ref{powerful-bound} and summation by parts to conclude that 
\begin{equation}
\sum_{n\leq x}\frac{g(n)}{n}=O_k(\log^{2/\pi}(x)),
\end{equation}
which proves the claimed bound.
\end{proof}

Using Weyl's equidistribution criterion and Theorem \ref{we are getting there} we get the following corollary from Theorem \ref{bounding-weyl-sums-hyperbolic}:
\begin{cor}
    Consider the sequence $h$ in \eqref{h-sequence}. Then $h$ is equidistributed on   $(\R\backslash \Z)^2$ over primes.
\end{cor}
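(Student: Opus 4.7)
The plan is to deduce equidistribution over primes from the Weyl sum bound of Theorem \ref{bounding-weyl-sums-hyperbolic} together with the prime counting asymptotic of Theorem \ref{we are getting there}, via Weyl's equidistribution criterion on $(\R/\Z)^2$.

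First I would recall that Weyl's criterion reduces equidistribution of $h$ over primes to the statement that for every $(m_1,m_2)\in\Z^2\setminus\{0\}$,
\begin{equation}
\frac{1}{\pi_h(x)}\sum_{\substack{[\g]\\ \delta_i(\g)=0\\ \nu(\g)=p\leq x}}e\!\left(m_1\frac{\log y_1(\g)}{2\log\e^2}+m_2\frac{\log y_2(\g)}{2\log\e^2}\right)\longrightarrow 0 \quad (x\to\infty).
\end{equation}
Since $2\log\e^2=\log\e^4$, the exponentials appearing here are exactly the ones appearing in Theorem \ref{bounding-weyl-sums-hyperbolic}, so the numerator is bounded (for each fixed non-zero $(m_1,m_2)$) by $O\!\left(x\,(\log x)^{-1}(\log x)^{-(1-2/\pi)}\right)$.

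Second, Theorem \ref{we are getting there} gives $\pi_h(x)=C\,\li(x)+O(x/\log^A x)$ with $C>0$, and since $\li(x)\sim x/\log x$ we obtain
\begin{equation}
\pi_h(x)\gg \frac{x}{\log x}
\end{equation}
for all $x$ sufficiently large. Dividing, the ratio above is $O\!\left((\log x)^{-(1-2/\pi)}\right)$, and because $1-2/\pi>0$ this tends to zero. Applying Weyl's criterion coordinate-by-coordinate (so that the characters $e(m_1 t_1+m_2 t_2)$ indexed by $\Z^2\setminus\{0\}$ separate points in $C((\R/\Z)^2)$) delivers the claimed equidistribution.

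I do not expect a real obstacle here: all the substantive work has been done in the preceding sections, namely the translation of the hyperbolic double-coset data to $D_K(5n+1)\times D_K(n)$ in Theorem \ref{a cool map}, the Nair--Tenenbaum plus Halberstam--Richert machinery applied to Lemma \ref{powerful-bound} giving Theorem \ref{bounding-weyl-sums-hyperbolic}, and the Titchmarsh-type count of Theorem \ref{Titchmarsh-type-asympt} giving Theorem \ref{we are getting there}. The only minor bookkeeping point is the factor of $2$ between $\log\e^2$ and $\log\e^4$ in the denominators of the normalized angles, which rescales $(m_1,m_2)\to(2m_1,2m_2)$ and does not affect the application of Weyl's criterion since a non-zero integer pair remains non-zero after doubling.
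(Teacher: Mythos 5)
Your proposal is correct and follows exactly the paper's route: the paper deduces the corollary in one line by combining the Weyl-sum bound of Theorem \ref{bounding-weyl-sums-hyperbolic} with the lower bound $\pi_h(x)\gg x/\log x$ from Theorem \ref{we are getting there} and applying Weyl's criterion, which is precisely your argument. Your closing remark about a factor of $2$ is moot --- as you yourself note, $2\log\e^2=\log\e^4$, so the normalizations already coincide and no rescaling of $(m_1,m_2)$ occurs --- but this does not affect the correctness of the proof.
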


\bibliographystyle{amsplain}
\providecommand{\bysame}{\leavevmode\hbox to3em{\hrulefill}\thinspace}
\providecommand{\MR}{\relax\ifhmode\unskip\space\fi MR }
\providecommand{\MRhref}[2]{%
  \href{http://www.ams.org/mathscinet-getitem?mr=#1}{#2}
}
\providecommand{\href}[2]{#2}

\end{document}